\newtheorem{theorem}{Theorem}[section]
\newtheorem{lemma}[theorem]{Lemma}
\newtheorem{proposition}[theorem]{Proposition}
\newtheorem{corollary}[theorem]{Corollary}
\newcommand{\Mat}{{\mathbf{M}}}
\newcommand{\supp}{\mathrm{supp}\,}
\newcommand{\vvv}{|\!|\!|}
\newcommand{\Lvvv}{\left|\!\left|\!\left|}
\newcommand{\Rvvv}{\right|\!\right|\!\right|}
\title[Mather sets and joint spectral radii]{Mather sets for sequences of matrices and applications to the study of joint spectral radii}
\author{Ian D. Morris}
\begin{document}
\maketitle
\begin{abstract}
The joint spectral radius of a compact set of $d \times d$ matrices is defined to be the maximum possible exponential growth rate of products of matrices drawn from that set. In this article we investigate the ergodic-theoretic structure of those sequences of matrices drawn from a given set whose products grow at the maximum possible rate. This leads to a notion of Mather set for matrix sequences which is analogous to the Mather set in Lagrangian dynamics. We prove a structure theorem establishing the general properties of these Mather sets and describing the extent to which they characterise matrix sequences of maximum growth. We give applications of this theorem to the study of joint spectral radii and to the stability theory of discrete linear inclusions.

These results rest on some general theorems on the structure of orbits of maximum growth for subadditive observations of dynamical systems, including an extension of the semi-uniform subadditive ergodic theorem of Schreiber, Sturman and Stark, and an extension of a noted lemma of Y. Peres. These theorems are presented in the appendix.
\end{abstract}

\section{Introduction}

Let $A$ be a $d \times d$ complex matrix, and let $\|\cdot\|$ be a norm on $\mathbb{C}^d$. The \emph{spectral radius} of $A$, denoted $\rho(A)$, is defined to be the maximum of the moduli of the eigenvalues of $A$, and satisfies the well-known formula
\[\rho(A)=\lim_{n\to\infty}\|A^n\|^{\frac{1}{n}} = \inf_{n \geq 1} \|A^n\|^{\frac{1}{n}}\]
due to I. Gelfand. By analogy with this formula, given a bounded set $\mathsf{A}$ of complex $d \times d$ matrices we define the \emph{joint spectral radius} of $\mathsf{A}$  to be the quantity
\[\varrho(A):=\lim_{n\to\infty}\sup\left\{\|A_{i_1}A_{i_2}\cdots A_{i_n} \|^{\frac{1}{n}} \colon A_i \in \mathsf{A}\right\}=\inf_{n \geq 1}\sup\left\{\|A_{i_1}A_{i_2}\cdots A_{i_n} \|^{\frac{1}{n}} \colon A_i \in \mathsf{A}\right\}.\]
The existence of this limit and its identity with the above infimum is a straightforward consequence of Fekete's subadditivity lemma, which we include in the appendix as Lemma \ref{Fuckite}. The joint spectral radius was introduced by G.-C. Rota and G. Strang in 1960 (\cite{RS}, later reprinted in \cite{Rotacoll}), and in the last two decades has attracted substantial research attention. This has dealt with its applications, which include control theory \cite{Ba,DHM,Gurvits}, wavelet regularity \cite{DL0}, numerical solutions to differential equations \cite{GZ}, combinatorics \cite{BCJ}, and coding theory \cite{MO}; with algorithms for its computation and estimation \cite{BN,GWZ,Koz2,Madv,PJB}, and questions of its theoretical computability \cite{BT1}; and with its intrinsic properties as a mathematical function \cite{Bochi,HS,Koz4,Wirth1,Wirth2}. 

If $\mathsf{A}$ is a compact set of $d \times d$ complex matrices, then it is not difficult to see that $\varrho(\mathsf{A})$ admits the alternative formulation
\begin{equation}\label{frkrs}\varrho(\mathsf{A})=\sup_{(A_{i_j})_{j =1}^\infty \in \mathsf{A}^{\mathbb{N}}} \limsup_{n \to \infty} \|A_{i_n}\cdots A_{i_1}\|^{\frac{1}{n}}.\end{equation}
That is, $\varrho(\mathsf{A})$ is the fastest possible exponential growth rate of the sequence of partial products of an infinite sequence of elements of $\mathsf{A}$. In this article we are concerned with the problem of understanding, for a given set of matrices, which sequences achieve this fastest possible exponential growth rate. In some form this problem goes back to early research by J. Lagarias and Y. Wang \cite{LW}, and by L. Gurvits \cite{Gurvits}, in which it was asked whether this maximal growth rate is always attained by a periodic sequence.
The general problem of understanding the structure of these fastest-growing sequences of partial products has arisen not only in attempts to answer Lagarias-Wang and Gurvits' question - such as in the articles \cite{BM,HMST,Koz3} - but also in control theory \cite{DHM} and when studying the approximability of the joint spectral radius by spectral radii of finite matrix products \cite{Madv}.

In this article we investigate these orbits of maximal growth from an ergodic-theoretic perspective. Namely, we study those shift-invariant measures on spaces of sequences of matrices for which the pointwise asymptotic growth rate of the sequence of partial products is maximised. We then relate this to some more concrete notions of orbits of maximal growth. Our first theorem, Theorem \ref{simple}, gives some equivalent formulations of the definition of the joint spectral radius in terms of invariant measures. Motivated by earlier research in the fields of Lagrangian dynamics and ergodic optimisation, we go on to consider an object analogous to the \emph{Mather set} in Lagrangian dynamics, which consists of the union of the supports of these growth-maximising invariant measures. Our main result, Theorem \ref{struc}, is a structural theorem which describes the properties of these Mather sets and their relationship with orbits of maximal growth. To motivate this study we give an application of Theorem \ref{struc} to the stability theory of discrete linear inclusions introduced by L. Gurvits \cite{Gurvits}, and use Theorem \ref{struc} to prove some new results relating to the theory of joint spectral radii.

\section{Notation and statement of main theorems}

Before describing our results in more detail, let us establish some notation and conventions. Throughout this document the expression $\log 0$ will be interpreted as being equal to $-\infty$, so that $\log$ extends to a continuous function from $\mathbb{R}_{\geq 0}$ to $\mathbb{R}\cup\{-\infty\}$. We shall use the symbol $\Mat_d(\mathbb{C})$ to denote the set of all $d \times d$ matrices with complex entries. In the remainder of the article the symbol $\|\cdot\|$ will be used to denote both the Euclidean norm on the finite-dimensional space $\mathbb{C}^d$, and also the norm on $\Mat_d(\mathbb{C})$ induced by the Euclidean norm. More general norms on $\mathbb{C}^d$ or $\Mat_d(\mathbb{C})$ shall usually be denoted by $\vvv\cdot\vvv$. We shall call a norm $\vvv \cdot\vvv$ on $\Mat_d(\mathbb{C})$ \emph{submultiplicative} if $\vvv AB\vvv \leq \vvv A \vvv .\vvv B \vvv$ for all $A,B \in \Mat_d(\mathbb{C})$; this property holds in particular for every norm on $\Mat_d(\mathbb{C})$ which is induced by a norm on $\mathbb{C}^d$. We shall say that a norm $\vvv\cdot\vvv$ on $\mathbb{C}^d$ is an \emph{extremal norm} for $\mathsf{A}$ if the induced norm on $\Mat_d(\mathbb{C})$ satisfies $\vvv A \vvv \leq \varrho(\mathsf{A})$ for all $A \in \mathsf{A}$. A set of matrices $\mathsf{A} \subset \Mat_d(\mathbb{C})$ shall be called \emph{relatively product bounded} if there exists a constant $C>1$ such that $\|A_{i_1}\cdots A_{i_n}\| \leq C \varrho(\mathsf{A})^n$ for all $n \geq 1$ and $A_{i_1},\ldots,A_{i_n}\in \mathsf{A}$. It is not difficult to show that a set $\mathsf{A} \subset \Mat_d(\mathbb{C})$ is relatively product bounded if and only if it admits an extremal norm \cite{Jungers,RS}. 

In order to study the set of sequences of elements of $\mathsf{A}$ whose partial products grow at a given rate, it would be most natural to work directly with the sequence space $\mathsf{A}^{\mathbb{N}}$. However, for various reasons this approach is inconvenient. For example, in proving several of our  main results we will find it useful, when considering a sequence of matrices $(A_{i_j})$, to pass to a corresponding sequence of lower-dimensional matrices $(B_{i_j})$ derived from the sequence $(A_{i_j})$, whilst retaining an awareness of the structure of the original sequence. This is problematic since in general the two kinds of sequence will not be related by a bijective correspondence. Furthermore, in other applications we shall be interested in how certain structures change when the set $\mathsf{A}$ itself is perturbed. For these reasons, rather than working directly with $\mathsf{A}^{\mathbb{N}}$ we shall instead choose to work on abstract spaces of \emph{index sequences} which are detached from any a priori relationships with sets of matrices.

Given a compact set of matrices $\mathsf{A}$, we shall say that $\mathcal{I}$ is an \emph{index set} for $\mathsf{A}$, or simply that $\mathsf{A}$ is indexed in $\mathcal{I}$, to mean that $\mathcal{I}$ is a compact metric space equipped with a continuous surjection $\mathcal{I} \to \mathsf{A}$ taking the symbol $i\in\mathcal{I}$ to the matrix $A_i \in \mathsf{A}$. In general this map will not be assumed to be injective. Given an index set $\mathcal{I}$, we shall use the symbol $\Sigma_{\mathcal{I}}$ to denote the metrisable topological space $\mathcal{I}^{\mathbb{N}}$ of all one-sided infinite sequences in $\mathcal{I}$, which we equip with the infinite product topology inherited from $\mathcal{I}$. We define the \emph{shift transformation} $\sigma \colon \Sigma_{\mathcal{I}} \to \Sigma_{\mathcal{I}}$ by $\sigma[(x_i)_{i=1}^\infty]:=(x_{i+1})_{i=1}^\infty$, which is a continuous surjection from $\Sigma_{\mathcal{I}}$ to itself.  When $\mathcal{I}$ equals to $\{1,\ldots,\ell\}$ for some integer $\ell$ we shall simply write $\Sigma_\ell$ for the set $\Sigma_{\mathcal{I}}$. In general we shall leave the structure of $\Sigma_{\mathcal{I}}$ as a metrisable space implicit, but in the special case of finite index sets we shall sometimes find it convenient to equip each $\Sigma_\ell$ with an explicit metric. In these cases we shall consider the metric on each $\Sigma_\ell$ given by
\[d\left[(x_i),(y_i)\right]:=2^{-\inf\{n \geq 1 \colon x_n \neq y_n\}},\]
where $2^{-\infty}$ is interpreted as $0$.

 When a particular shift space $\Sigma_{\mathcal{I}}$ is understood, we shall use the symbol $\mathcal{M}$ to denote the set of all Borel probability measures on $\Sigma_{\mathcal{I}}$, and the symbol $\mathcal{M}_\sigma$ to denote the set of all such measures which are invariant under $\sigma$. We denote the set of all ergodic $\sigma$-invariant measures on $\Sigma_{\mathcal{I}}$ by $\mathcal{E}_\sigma$. We equip $\mathcal{M}$ with the weak-* topology, which is the smallest topology on $\mathcal{M}$ such that the map $\mu \mapsto \int g\,d\mu$ is continuous for every continuous $g \colon \Sigma_{\mathcal{I}} \to \mathbb{R}$. With respect to the weak-* topology $\mathcal{M}$ is compact and metrisable, and $\mathcal{M}_\sigma$ is a closed subset of $\mathcal{M}$ (see e.g. \cite{W}).

We use the following notation to describe the partial products of sequences of matrices associated to elements of $\Sigma_{\mathcal{I}}$. If $\mathsf{A}$ is a compact set of matrices indexed by $\mathcal{I}$, then for each $x=(x_i)_{i=1}^\infty \in \Sigma_{\mathcal{I}}$ and each $n \geq 1$ we define
\[\mathcal{L}_{\mathsf{A}}(x,n):=A_{x_n}A_{x_{n-1}}\cdots A_{x_1}.\]
The function $\mathcal{L}_{\mathsf{A}} \colon \Sigma_{\mathcal{I}} \times \mathbb{N} \to \Mat_d(\mathbb{C})$ is continuous and satisfies the following \emph{cocycle relation}: for all $x \in \Sigma_{\mathcal{I}}$ and $n, m \geq 1$,
\[\mathcal{L}_{\mathsf{A}}(x,n+m) = \mathcal{L}_{\mathsf{A}}(\sigma^nx,m)\mathcal{L}_{\mathsf{A}}(x,n).\]
Since by construction we have $\left\{\mathcal{L}_{\mathsf{A}}(x,n) \colon x \in \Sigma_{\mathcal{I}}\right\} = \left\{A_{i_n}\cdots A_{i_1}\colon A_i \in \mathsf{A}\right\}$ for each $n \geq 1$, the joint spectral radius $\varrho(\mathsf{A})$ is described in terms of $\mathcal{L}_{\mathsf{A}}$ by the expression
\begin{equation}\label{fart}\varrho(\mathsf{A}) = \lim_{n \to \infty}\sup \left\{\|\mathcal{L}_{\mathsf{A}}(x,n)\|^{\frac{1}{n}}\colon x \in \Sigma_{\mathcal{I}}\right\},\end{equation}
and in view of \eqref{frkrs} we further have
\begin{equation}\label{fartier}\varrho(\mathsf{A})= \sup_{x \in \Sigma_{\mathcal{I}}} \limsup_{n \to \infty} \left\|\mathcal{L}_{\mathsf{A}}(x,n)\right\|^{\frac{1}{n}}.\end{equation}
Given a set $\mathsf{A}$ indexed in $\mathcal{I}$ and an ergodic measure $\mu$ on $\Sigma_{\mathcal{I}}$, the subadditive ergodic theorem (given in the appendix as Theorem \ref{SAET}) implies that for $\mu$-a.e. $x \in \Sigma_{\mathcal{I}}$ one has
\[\lim_{n \to \infty} \frac{1}{n} \log \left\|\mathcal{L}_{\mathsf{A}}(x,n)\right\| = \inf_{n \geq 1}\frac{1}{n}\int \log\left\|\mathcal{L}_{\mathsf{A}}(z,n)\right\|\,d\mu(z).\] 
In view of this and \eqref{fartier} it is natural to ask whether, given such a set $\mathsf{A}$ and index set $\mathcal{I}$, there always exists an ergodic measure on $\Sigma_{\mathcal{I}}$ such that $\inf_{n \geq 1}\frac{1}{n}\int \log\|\mathcal{L}_{\mathsf{A}}(x,n)\|\,d\mu=\log\varrho(\mathsf{A})$. The answer to this question is given by the following fundamental result, which provides us with a range of dynamical descriptions of the joint spectral radius $\varrho(\mathsf{A})$:
\begin{theorem}\label{simple}
Let $\mathsf{A} \subset \Mat_d(\mathbb{C})$ be a compact set indexed in $\mathcal{I}$, and let $\vvv\cdot\vvv$ be a submultiplicative norm on $\Mat_d(\mathbb{C})$. Then
\begin{align*}
\log\varrho(\mathsf{A})&=\sup_{\mu \in \mathcal{M}_\sigma} \inf_{n \geq 1}\frac{1}{n}\int_{\Sigma_{\mathcal{I}}} \log\Lvvv\mathcal{L}_{\mathsf{A}}(x,n)\Rvvv\,d\mu(x) \\
&= \inf_{n \geq 1}\sup_{\mu \in \mathcal{M}_\sigma}\frac{1}{n}\int_{\Sigma_{\mathcal{I}}} \log\Lvvv\mathcal{L}_{\mathsf{A}}(x,n)\Rvvv\,d\mu(x)\\
&= \sup_{x \in \Sigma_{\mathcal{I}}} \inf_{n \geq 1} \frac{1}{n}\log\Lvvv\mathcal{L}_{\mathsf{A}}(x,n)\Rvvv.
\end{align*}
In the first two expressions every supremum over $\mathcal{M}_\sigma$ is attained by some ergodic measure, and in the last expression the supremum is attained for at least one $x \in \Sigma_{\mathcal{I}}$. Furthermore, in each of the first two expressions, the infimum over all $n \geq 1$ may be replaced with a limit as $n \to \infty$ without affecting the validity of the expression.
\end{theorem}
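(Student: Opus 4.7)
The plan is to abbreviate $\phi_n(x) := \log \Lvvv\mathcal{L}_\mathsf{A}(x,n)\Rvvv$ and to exploit the subadditive cocycle inequality $\phi_{n+m}(x) \leq \phi_n(x) + \phi_m(\sigma^n x)$, which follows directly from the cocycle identity and the submultiplicativity of $\vvv\cdot\vvv$. As a warm-up I would observe that $n \mapsto \sup_{x \in \Sigma_\mathcal{I}} \phi_n(x)$ is then subadditive, so Fekete's lemma (Lemma \ref{Fuckite}) produces a common value for its infimum and its limit, and equivalence of $\vvv\cdot\vvv$ with $\|\cdot\|$ together with \eqref{fart} identifies this value as $\log\varrho(\mathsf{A})$.

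For the first equality, for each $\mu \in \mathcal{M}_\sigma$ invariance of $\mu$ combined with the cocycle inequality makes $n \mapsto \int \phi_n\,d\mu$ subadditive, so $\Lambda(\mu) := \inf_n \tfrac{1}{n}\int \phi_n\,d\mu = \lim_n \tfrac{1}{n}\int \phi_n\,d\mu$ exists by Fekete. I would then apply the extension of the Schreiber--Sturman--Stark semi-uniform subadditive ergodic theorem advertised for the appendix to conclude $\sup_{\mu\in\mathcal{M}_\sigma}\Lambda(\mu) = \log\varrho(\mathsf{A})$ with the supremum attained. That the maximiser may be chosen ergodic follows by ergodic decomposition: $\Lambda$ is affine on the compact Choquet simplex $\mathcal{M}_\sigma$ whose extreme points are precisely $\mathcal{E}_\sigma$.

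For the second equality, the general minimax inequality gives $\sup_\mu \inf_n \leq \inf_n \sup_\mu$, while the pointwise bound $\sup_\mu \tfrac{1}{n}\int\phi_n\,d\mu \leq \tfrac{1}{n}\sup_x \phi_n(x)$ yields $\inf_n \sup_\mu \leq \log\varrho(\mathsf{A})$; together with the first equality this sandwiches both expressions at $\log\varrho(\mathsf{A})$. Subadditivity of $n \mapsto \sup_\mu \int \phi_n\,d\mu$, inherited from the cocycle inequality, allows the infimum to be replaced by a limit via Fekete. For fixed $n$ the inner supremum is attained at an ergodic measure because $\mu \mapsto \int \phi_n\,d\mu$ is affine and upper semi-continuous (the convention $\log 0 = -\infty$ being harmless here) on the compact simplex $\mathcal{M}_\sigma$, hence attains its supremum at an extreme point.

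The main obstacle is the third equality, which demands a \emph{single} trajectory along which the cocycle never falls below the maximal rate. The upper bound $\sup_x \inf_n \tfrac{1}{n}\phi_n(x) \leq \inf_n \tfrac{1}{n}\sup_x \phi_n(x) = \log\varrho(\mathsf{A})$ is immediate. For the lower bound and simultaneous attainment I would invoke the extension of Y. Peres' lemma promised for the appendix: fed the subadditive cocycle $\phi_n$ over the compact system $(\Sigma_\mathcal{I},\sigma)$ together with the information from \eqref{fartier} (or equivalently from the first equality) that some orbit grows at the maximal exponential rate, it will yield $x_\ast \in \Sigma_\mathcal{I}$ with $\phi_n(x_\ast) \geq n\log\varrho(\mathsf{A})$ for every $n \geq 1$, so that $\inf_n \tfrac{1}{n}\phi_n(x_\ast) = \log\varrho(\mathsf{A})$ and this $x_\ast$ realises the supremum.
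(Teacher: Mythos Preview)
Your proposal is correct and follows essentially the same approach as the paper: the paper's proof simply observes that $f_n(x):=\log\Lvvv\mathcal{L}_{\mathsf{A}}(x,n)\Rvvv$ is a subadditive sequence of upper semi-continuous functions, identifies $\log\varrho(\mathsf{A})$ with $\lim_n\sup_x\frac{1}{n}f_n(x)$ via equivalence of norms and \eqref{fart}, and then applies in one stroke the appendix Theorem~\ref{StSt}, whose proof bundles exactly the ingredients you list (Fekete, the Schreiber--Sturman--Stark extension, the Peres-type Proposition~\ref{heavy}, and the minimax sandwich). Your breakdown into separate pieces is thus just an unpacking of that single appendix theorem.
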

This theorem motivates us to study the following two classes of objects. Firstly, given a compact set $\mathsf{A} \subset \Mat_d(\mathbb{C})$ indexed in $\mathcal{I}$, let us define the set of \emph{maximising measures} of $\mathsf{A}$ to be the set of measures on $\Sigma_{\mathcal{I}}$ given by
\[\mathcal{M}_{\max}(\mathsf{A}):=\left\{\mu \in \mathcal{M}_\sigma \colon \inf_{n \geq 1}\frac{1}{n}\int \log \left\|\mathcal{L}_{\mathsf{A}}(x,n)\right\|\,d\mu(x) = \log \varrho(\mathsf{A})\right\}.\]
Secondly, we shall say that $x \in \Sigma_{\mathcal{I}}$ is \emph{strongly extremal} if there exists $\varepsilon>0$ such that $\|\mathcal{L}_{\mathsf{A}}(x,n)\| \geq \varepsilon \varrho(\mathsf{A})^n$ for every $n \geq 1$, and $x$ is \emph{weakly extremal} if $\lim_{n \to \infty} \|\mathcal{L}_{\mathsf{A}}(x,n)\|^{1/n}=\varrho(\mathsf{A})$. These definitions were previously given in a more specialised context in \cite{HMST}; related definitions have been applied independently by other authors \cite{Koz3,Tthesis}, and some comparison with these definitions is undertaken in \S\ref{appratio}. Since every norm on $\Mat_d(\mathbb{C})$ is equivalent to $\|\cdot\|$, the sets of strongly and weakly extremal elements of $\Sigma_{\mathcal{I}}$ are unaffected if a different norm is used in the definition. 

The elementary properties of $\mathcal{M}_{\max}(\mathsf{A})$ are given by the following:
\begin{proposition}\label{rabbit}
Let $\mathsf{A} \subset \Mat_d(\mathbb{C})$ be a compact set indexed in $\mathcal{I}$, and suppose that $\varrho(\mathsf{A})>0$. Then $\mathcal{M}_{\max}(\mathsf{A})$ is compact, convex and nonempty, and its extreme points are precisely its ergodic elements. If $\mu \in \mathcal{M}_\sigma$ and $\vvv\cdot\vvv$ is any norm on $\Mat_d(\mathbb{C})$, then $\mu \in \mathcal{M}_{\max}(\mathsf{A})$ if and only if $\lim_{n \to \infty}\frac{1}{n}\int \log\Lvvv\mathcal{L}_{\mathsf{A}}(x,n)\Rvvv\,d\mu(x)=\log\varrho(\mathsf{A})$. If in addition $\vvv\cdot\vvv$ is submultiplicative, then $\mu \in \mathcal{M}_{\max}(\mathsf{A})$ if and only if $\inf_{n \geq 1}\frac{1}{n}\int \log\Lvvv\mathcal{L}_{\mathsf{A}}(x,n)\Rvvv\,d\mu(x)=\log\varrho(\mathsf{A})$.
\end{proposition}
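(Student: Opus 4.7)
The plan is to first establish the limit and infimum characterisations using Fekete's lemma and norm equivalence, then to read off the remaining structural claims from a single upper semicontinuous affine functional on $\mathcal{M}_\sigma$. For the characterisations I fix $\mu\in\mathcal{M}_\sigma$ and a submultiplicative norm $\vvv\cdot\vvv$, and apply Fekete's lemma (Lemma~\ref{Fuckite}) to the sequence $a_n := \int\log\vvv\mathcal{L}_{\mathsf{A}}(x,n)\vvv\,d\mu(x)$. The cocycle relation, submultiplicativity, and $\sigma$-invariance give $a_{n+m}\leq a_n+a_m$, whence $\lim_{n\to\infty}a_n/n=\inf_{n\geq 1}a_n/n$. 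Applied to $\vvv\cdot\vvv=\|\cdot\|$ (which is itself submultiplicative), together with Theorem~\ref{simple}, this yields both the submultiplicative-norm characterisation and the limit characterisation for $\|\cdot\|$. For a general norm $\vvv\cdot\vvv$, equivalence of norms on $\Mat_d(\mathbb{C})$ gives a constant $C\geq 1$ with $|\log\vvv A\vvv-\log\|A\||\leq\log C$ for every nonzero $A$, so $\frac{1}{n}\int\log\vvv\mathcal{L}_{\mathsf{A}}(x,n)\vvv\,d\mu$ and $\frac{1}{n}\int\log\|\mathcal{L}_{\mathsf{A}}(x,n)\|\,d\mu$ differ by at most $(\log C)/n\to 0$, giving the general limit characterisation.

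Next I introduce the functional $G\colon\mathcal{M}_\sigma\to[-\infty,\infty)$ defined by $G(\mu):=\lim_{n\to\infty}\frac{1}{n}\int\log\|\mathcal{L}_{\mathsf{A}}(x,n)\|\,d\mu=\inf_{n\geq 1}\frac{1}{n}\int\log\|\mathcal{L}_{\mathsf{A}}(x,n)\|\,d\mu$, which is well-defined by the previous step and satisfies $\mathcal{M}_{\max}(\mathsf{A})=G^{-1}(\log\varrho(\mathsf{A}))$. Nonemptiness follows from Theorem~\ref{simple}, which produces an ergodic measure attaining $\sup G=\log\varrho(\mathsf{A})$. Since $x\mapsto\log\|\mathcal{L}_{\mathsf{A}}(x,n)\|$ is upper semicontinuous and bounded above, each functional $\mu\mapsto\frac{1}{n}\int\log\|\mathcal{L}_{\mathsf{A}}(x,n)\|\,d\mu$ is weak-$*$ upper semicontinuous and affine on $\mathcal{M}_\sigma$; hence $G$ is u.s.c.\ (as an infimum of u.s.c.\ functions) and affine (as a limit of affine functions, interpreted appropriately at points where some summand is $-\infty$). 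Upper semicontinuity together with compactness of $\mathcal{M}_\sigma$ gives compactness of $\mathcal{M}_{\max}(\mathsf{A})$, and affineness gives convexity.

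Finally, affineness of $G$ shows that $\mathcal{M}_{\max}(\mathsf{A})$ is a \emph{face} of $\mathcal{M}_\sigma$: if $\mu=t\mu_1+(1-t)\mu_2$ with $\mu\in\mathcal{M}_{\max}(\mathsf{A})$, $\mu_i\in\mathcal{M}_\sigma$, and $t\in(0,1)$, then $\log\varrho(\mathsf{A})=G(\mu)=tG(\mu_1)+(1-t)G(\mu_2)$, and since $G(\mu_i)\leq\log\varrho(\mathsf{A})$ we are forced to have $G(\mu_i)=\log\varrho(\mathsf{A})$, i.e., $\mu_i\in\mathcal{M}_{\max}(\mathsf{A})$. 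A routine convex-geometric argument then shows that the extreme points of a face $F$ of a convex set $C$ are precisely the extreme points of $C$ that lie in $F$; combined with the standard fact that the extreme points of $\mathcal{M}_\sigma$ are exactly its ergodic measures, this identifies the extreme points of $\mathcal{M}_{\max}(\mathsf{A})$ as precisely its ergodic elements. I anticipate no serious obstacle here; the only delicate point is handling the value $G(\mu)=-\infty$ which can occur for some $\mu\in\mathcal{M}_\sigma$, but this is unproblematic since on $\mathcal{M}_{\max}(\mathsf{A})$ the value is the finite quantity $\log\varrho(\mathsf{A})$, which is finite by the hypothesis $\varrho(\mathsf{A})>0$.
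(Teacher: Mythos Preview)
Your proof is correct and essentially the same as the paper's. The paper defers the structural claims (compact, convex, nonempty, extreme points ergodic) to an appendix result, Proposition~\ref{mmax}, whose proof is precisely the upper-semicontinuity/affineness argument you carry out inline (with the face property in place of the paper's direct ergodic decomposition), and it handles the norm characterisations via Fekete's lemma and norm equivalence exactly as you do.
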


\emph{A priori}, if $\mu$ is an ergodic maximising measure for $\mathsf{A}$, then by the subadditive ergodic theorem it follows that almost every point in the support of $\mu$ is weakly extremal. By Theorem \ref{simple}, we also know that strongly extremal orbits for $\mathsf{A}$ exist. Beyond this, it is not automatically obvious that the maximising measures and extremal orbits associated to a set $\mathsf{A} \subset \Mat_d(\mathbb{C})$ should enjoy any significant structural relationships. Our main theorem shows that in fact both the maximising measures and the extremal orbits of a set $\mathsf{A} \subset \Mat_d(\mathbb{C})$ are strongly characterised by a certain compact invariant set $Z_{\mathsf{A}} \subseteq \Sigma_{\mathcal{I}}$, which we call the \emph{Mather set} of $\mathsf{A}$. This result echoes some fundamental results in ergodic optimisation and Lagrangian dynamics, as described below. This theorem is the following:
\begin{theorem}\label{struc}
Let $\mathsf{A}\subset \Mat_d(\mathbb{C})$ be a compact set with positive joint spectral radius, and let $\mathcal{I}$ be an index set for $\mathsf{A}$. Define the \emph{Mather set} $Z_{\mathsf{A}} \subseteq \Sigma_{\mathcal{I}}$ associated to $\mathsf{A}$ by
\[Z_{\mathsf{A}}:= \bigcup_{\mu \in \mathcal{M}_{\max}(\mathsf{A})} \supp \mu.\] Then $Z_{\mathsf{A}}$ has the following properties:
\begin{enumerate}
\item
$Z_{\mathsf{A}}$ is equal to the support of some measure $\hat\mu \in \mathcal{M}_\sigma$. In particular, $Z_{\mathsf{A}}$ is compact and nonempty and satisfies $\sigma Z_{\mathsf{A}}=Z_{\mathsf{A}}$.
\item
For every $\mu \in \mathcal{M}_\sigma$ we have $\mu \in \mathcal{M}_{\max}(\mathsf{A})$ if and only if $\mu(Z_{\mathsf{A}})=1$.
\item
Every $x \in Z_{\mathsf{A}}$ which is recurrent with respect to $\sigma$ satisfies the property $\limsup_{n \to \infty} \varrho(\mathsf{A})^{-n}\rho(\mathcal{L}_{\mathsf{A}}(x,n))=1$.
\item
Every $x \in Z_{\mathsf{A}}$ is strongly extremal. If in addition $\mathsf{A}$ is relatively product bounded, then for every extremal norm $\vvv \cdot \vvv$ for $\mathsf{A}$ we have $\vvv\mathcal{L}_{\mathsf{A}}(x,n)\vvv = \varrho(\mathsf{A})^n$ for all $x \in Z_{\mathsf{A}}$ and $n \geq 1$. 
\item
Let $d$ be a metric which generates the topology of $\Sigma_{\mathcal{I}}$, and define $\mathrm{dist}(x,Z_{\mathsf{A}}):=\inf\{d(x,y) \colon y \in Z_{\mathsf{A}}\}$ for every $x \in \Sigma_{\mathcal{I}}$. If $z \in \Sigma_{\mathcal{I}}$  is weakly extremal, then $\lim_{n \to \infty}(1/n)\sum_{k=0}^{n-1}\mathrm{dist}(\sigma^kz,Z_{\mathsf{A}}) = 0$.
\end{enumerate}
\end{theorem}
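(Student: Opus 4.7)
The plan is to prove the five items in the order (1), (4), (2), (3), (5), each later part relying on those that precede it.

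For (1), I use that $\mathcal{M}_{\max}(\mathsf{A})$ is a nonempty, convex, weak-$*$ compact subset of the metrisable space $\mathcal{M}_\sigma$ (Proposition \ref{rabbit}). Choose a countable dense subset $\{\mu_n\}_{n \geq 1}$ of $\mathcal{M}_{\max}(\mathsf{A})$ and set $\hat\mu := \sum_{n \geq 1} 2^{-n}\mu_n$. Using the characterisation in Proposition \ref{rabbit} together with dominated convergence, one checks that $\hat\mu \in \mathcal{M}_{\max}(\mathsf{A})$, which yields $\supp\hat\mu \subseteq Z_{\mathsf{A}}$. For the reverse containment, each $\mu \in \mathcal{M}_{\max}(\mathsf{A})$ is the weak-$*$ limit of some subsequence $(\mu_{n_k})$, and a standard argument gives $\supp\mu \subseteq \overline{\bigcup_k \supp\mu_{n_k}} \subseteq \supp\hat\mu$, so $Z_{\mathsf{A}} = \supp\hat\mu$. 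The remaining properties follow since $\hat\mu \in \mathcal{M}_\sigma$.

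For (4), I treat the relatively product bounded case first. Fix an extremal norm $\vvv\cdot\vvv$ and $\mu \in \mathcal{M}_{\max}(\mathsf{A})$; Proposition \ref{rabbit} gives $\inf_{n \geq 1} n^{-1}\int \log\vvv\mathcal{L}_{\mathsf{A}}(x,n)\vvv\, d\mu(x) = \log\varrho(\mathsf{A})$. Since extremality of the norm yields the pointwise bound $\vvv\mathcal{L}_{\mathsf{A}}(x,n)\vvv \leq \varrho(\mathsf{A})^n$, equality must hold $\mu$-a.e.\ for every $n$; intersecting the resulting $\mu$-full-measure sets over $n \in \mathbb{N}$ and observing that the intersection is closed, one deduces $\vvv\mathcal{L}_{\mathsf{A}}(x,n)\vvv = \varrho(\mathsf{A})^n$ identically on $Z_{\mathsf{A}}$, after which equivalence of norms upgrades this to strong extremality. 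In the general compact case the extremal norm argument is not directly available, so I would exploit the invariance of $Z_{\mathsf{A}}$ to restrict attention to the cocycle over $Z_{\mathsf{A}}$ and construct an extremal seminorm intrinsic to this restriction, using the structural results of the appendix.

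For the remaining parts: (2) follows from (4), the forward implication being immediate from $\supp\mu \subseteq Z_{\mathsf{A}}$ and the reverse using ergodic decomposition together with (4) to conclude that $\mu$-a.e.\ point is strongly extremal, after which the subadditive ergodic theorem and Theorem \ref{simple} jointly force the Lyapunov exponent to equal $\log\varrho(\mathsf{A})$. For (3), the bound $\rho(\mathcal{L}_{\mathsf{A}}(x,n))^{1/n} \leq \varrho(\mathsf{A})$ is immediate from Gelfand's formula; for the matching lower bound along a subsequence, given recurrent $x$ with $\sigma^{n_k}x \to x$, I would form the periodic invariant measures $\mu^{(k)}$ supported on the orbits of the periodic extensions of $(x_1,\ldots,x_{n_k})$, whose Lyapunov exponents equal $n_k^{-1}\log\rho(\mathcal{L}_{\mathsf{A}}(x,n_k))$, and invoke the appendix extension of Peres' lemma to produce a weak-$*$ cluster point $\mu \in \mathcal{M}_{\max}(\mathsf{A})$, from which upper semicontinuity of the Lyapunov exponent delivers the required asymptotic equality. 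For (5), with $\nu_n := n^{-1}\sum_{k=0}^{n-1}\delta_{\sigma^k z}$, any weak-$*$ cluster point $\nu$ lies in $\mathcal{M}_\sigma$, and a telescoping subadditivity argument combined with the weak extremality of $z$ yields $m^{-1}\int \log\|\mathcal{L}_{\mathsf{A}}(y,m)\|\, d\nu(y) \geq \log\varrho(\mathsf{A})$ for every $m$, so $\nu \in \mathcal{M}_{\max}(\mathsf{A})$; applying (2) gives $\nu(Z_{\mathsf{A}}) = 1$, and the continuity of $\mathrm{dist}(\cdot, Z_{\mathsf{A}})$ together with weak-$*$ convergence force $\int \mathrm{dist}(\cdot, Z_{\mathsf{A}})\, d\nu_n \to 0$ along every subsequence, hence along the full sequence.

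The main obstacle is the general (not relatively product bounded) case of (4): without an extremal norm, the pointwise bound $\vvv\mathcal{L}_{\mathsf{A}}(x,n)\vvv \leq \varrho(\mathsf{A})^n$ fails, and integral equality cannot be converted directly into a uniform pointwise lower bound on $\|\mathcal{L}_{\mathsf{A}}(x,n)\|$, so a more delicate reduction to the invariant cocycle over $Z_{\mathsf{A}}$ is needed. A secondary difficulty is the lower bound in (3), where the subadditive extension of Peres' lemma in the appendix must be applied with care to convert information about the support of the limiting measure into a statement about spectral radii of the partial products $\mathcal{L}_{\mathsf{A}}(x,n_k)$ along the recurrent orbit.
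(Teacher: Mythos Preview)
Your arguments for parts (i) and (v) are essentially the paper's, and your route for (ii) via (iv) is a reasonable variant. There are, however, two genuine gaps.

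\textbf{Part (iv), general case.} Your suggestion to ``construct an extremal seminorm intrinsic to the restriction'' to $Z_{\mathsf{A}}$ does not work: the matrices $A_i$ are unchanged by restricting the base, so if $\mathsf{A}$ is not relatively product bounded there is no extremal norm to be had, and the integral equality $\int \log\vvv\mathcal{L}_{\mathsf{A}}(\cdot,n)\vvv\,d\mu = n\log\varrho(\mathsf{A})$ cannot be converted to a pointwise equality without the pointwise upper bound $\vvv\mathcal{L}_{\mathsf{A}}(x,n)\vvv\le\varrho(\mathsf{A})^n$. The paper's method here is quite different: it proceeds by induction on the matrix dimension $d$. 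When $\mathsf{A}$ is not relatively product bounded, Proposition~\ref{ERXX} yields an upper triangularisation $(\mathsf{A}^{(1)},\mathsf{A}^{(2)},\mathsf{B},M)$ with $\varrho(\mathsf{A}^{(1)})=\varrho(\mathsf{A}^{(2)})=\varrho(\mathsf{A})$ and $\mathsf{A}^{(1)}$ relatively product bounded; Proposition~\ref{ergmeasures} (comparison of top Lyapunov exponents for ergodic measures) together with Choquet's theorem is then used to prove $Z_{\mathsf{A}}=Z_{\mathsf{A}^{(1)}}\cup Z_{\mathsf{A}^{(2)}}$, after which (iv) follows from the inductive hypothesis and the block-triangular form of $M^{-1}\mathcal{L}_{\mathsf{A}}(x,n)M$. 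This reduction is the real content of the non-RPB case, and nothing in the appendix substitutes for it.

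\textbf{Part (iii).} Your periodic-measure argument is oriented the wrong way. Granting that the periodic measures $\mu^{(k)}$ have Lyapunov exponent $n_k^{-1}\log\rho(\mathcal{L}_{\mathsf{A}}(x,n_k))$ and that some weak-$*$ cluster point $\mu$ lies in $\mathcal{M}_{\max}(\mathsf{A})$, upper semicontinuity of the Lyapunov exponent gives only
\[
\limsup_{k\to\infty}\frac{1}{n_k}\log\rho\bigl(\mathcal{L}_{\mathsf{A}}(x,n_k)\bigr)=\limsup_{k\to\infty}\lambda(\mu^{(k)})\le\lambda(\mu)=\log\varrho(\mathsf{A}),
\]
which is the trivial direction; the needed lower bound does not follow. (Peres' lemma, Proposition~\ref{heavy}, produces a single point with large $\inf_m m^{-1}f_m$ and says nothing about limits of such periodic measures.) The paper instead argues via compact semigroups: for recurrent $x\in Z_{\mathsf{A}}$ and an extremal norm $\vvv\cdot\vvv$ (available by (iv) in the RPB case, or via the inductive triangularisation in general), the set $\mathcal{S}_{\mathsf{A}}(x)$ of accumulation points of $\varrho(\mathsf{A})^{-n}\mathcal{L}_{\mathsf{A}}(x,n)$ along sequences $n_j$ with $\sigma^{n_j}x\to x$ is a compact multiplicative semigroup contained in the unit sphere of $\vvv\cdot\vvv$, hence contains an idempotent $P\neq 0$ with $\rho(P)=1$; continuity of $\rho$ then gives $\limsup_n\varrho(\mathsf{A})^{-n}\rho(\mathcal{L}_{\mathsf{A}}(x,n))=1$.
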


\emph{Remark}. The definition of $Z_{\mathsf{A}}$ and parts (i) and (ii) of Theorem \ref{struc} were suggested by well-known results of J. Mather and R. Ma\~n\'e in Lagrangian dynamics \cite{Mane92,Mather91}; similar results have been obtained in other dynamical contexts by T. Bousch \cite{Bousch01} and by G. Contreras \emph{et al.} \cite{CLT}. 
Additionally, for the one-parameter family of pairs of matrices $\mathsf{A}_\alpha=\left\{A_1^{(\alpha)},A_2^{(\alpha)}\right\}$ given by
\[A_1^{(\alpha)}=\left(\begin{array}{cc}1 & 1\\0 & 1\end{array}\right),\qquad A_2^{(\alpha)}=\left(\begin{array}{cc}\alpha &0 \\\alpha & \alpha\end{array}\right),\]
where $\alpha \in [0,1]$, it is shown in \cite[Theorem 2.3]{HMST} that there exists for each $\alpha$ a set $X_{\mathfrak{r}(\alpha)} \subset \Sigma_2$ which satisfies properties (iii)-(v) above. Theorem \ref{struc} above can therefore also be seen as a partial generalisation of \cite[Theorem 2.3]{HMST} to arbitrary compact sets of square matrices.

Theorem \ref{struc} allows us to prove a number of theorems relating to optimal growth of matrix sequences, which are described in detail in subsequent sections of this article.  In \S\ref{appmarkov}, we prove a theorem which relates two different notions of stability for discrete linear inclusions introduced by L. Gurvits in the influential article \cite{Gurvits}. In \S\ref{appratio} we prove a proposition which unifies various definitions of the `1-ratio' of a pair of matrices given respectively by V. S. Kozyakin, by T. Bousch and J. Mairesse, and by K. Hare \emph{et al}. Using this unified definition, we prove a result on the continuity of 1-ratios which generalises results by each of these authors. Connections with the Lagarias-Wang finiteness property are discussed. Finally, in \S\ref{appbara} we prove a theorem dealing with the classification of Barabanov norms, and prove uniqueness of these norms for a family of pairs of matrices studied in \cite{BTV,HMST,Tthesis}. We defer detailed descriptions of these results to the relevant sections.

The proofs of Theorem \ref{simple}, Proposition \ref{rabbit} and Theorem \ref{struc} are given in  \S\ref{pfs}, following some preparatory results in \S\ref{prep}. The first two results in particular are dependent on general results on subadditive sequences of functions, which are presented in the appendix.

\section{Preliminaries to the proof of Theorem \ref{struc}}\label{prep}

In this section we prove some preliminary results needed for the proof of Theorem \ref{struc}, the purpose of which is to allow us to deal with the case in which $\mathsf{A}$ is not relatively product bounded. In this and the following section, the symbol $|L|$ will be used to denote the norm of the matrix $L$ given by the maximum of the moduli of the entries of $L$. This is the only norm which we shall apply to matrices which are strictly rectangular.

We begin with the following key definition. Let $\mathsf{A} \subset \Mat_{d}(\mathbb{C})$ be compact, and let $\mathcal{I}$ be an index set for $\mathsf{A}$. If $\mathsf{A}^{(1)}=\left\{A^{(1)}_i \colon i \in \mathcal{I}\right\}$, $\mathsf{A}^{(2)}=\left\{A^{(2)}_i \colon i \in \mathcal{I}\right\}$ and $\mathsf{B}=\left\{B_i \colon i \in \mathcal{I}\right\}$ are sets of matrices respectively of dimension $k \times k$, $(d-k)\times(d-k)$ and $k \times (d-k)$, where $0<k<d$, we say that $\left(\mathsf{A}^{(1)},\mathsf{A}^{(2)},\mathsf{B},M\right)$ is an \emph{upper triangularisation} of $\mathsf{A}$ if the relation
\[M^{-1}A_iM = \left(\begin{array}{cc}A_i^{(1)}&B_i\\0&A_i^{(2)}\end{array}\right)\]
is satisfied for all $i \in \mathcal{I}$. Note that if this is the case then each of the maps $i \mapsto A_i^{(1)}$, $i \mapsto A^{(2)}$, $i \mapsto B_i$ is continuous. The first of the two main results of this section is the following, which guarantees the existence of upper triangularisations with favourable properties:
\begin{proposition}\label{ERXX}
Suppose that $\mathsf{A} \subset \Mat_d(\mathbb{C})$ has nonzero joint spectral radius and is not relatively product bounded. Then there exists an upper triangularisation $\left(\mathsf{A}^{(1)},\mathsf{A}^{(2)},\mathsf{B},M\right)$ of $\mathsf{A}$ such that $\varrho\left(\mathsf{A}^{(1)}\right)=\varrho\left(\mathsf{A}^{(2)}\right)=\varrho(\mathsf{A})$ and $\mathsf{A}^{(1)}$ is relatively product bounded.
\end{proposition}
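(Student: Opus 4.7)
The plan is to decompose $\mathsf{A}$ into irreducible pieces via a composition series of common invariant subspaces, and then pick the correct splitting point in the resulting flag. After rescaling so that $\varrho(\mathsf{A})=1$, I would first build a flag
\[
0 = V_0 \subsetneq V_1 \subsetneq \cdots \subsetneq V_r = \mathbb{C}^d
\]
of common invariant subspaces of $\mathsf{A}$ in which the induced action on each quotient $W_j := V_j/V_{j-1}$ is irreducible (admits no common invariant subspace other than $\{0\}$ and $W_j$). Such a flag exists by iteratively passing to a minimal nonzero common invariant subspace at each stage. Choosing $M$ to be a change of basis sending each $V_j$ to the span of the first $\dim V_j$ standard basis vectors simultaneously puts every $A_i$ into block upper triangular form with irreducible diagonal blocks, and this block structure yields $\varrho(\mathsf{A}) = \max_j \varrho_j$ where $\varrho_j := \varrho(\mathsf{A}|_{W_j})$.

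The core technical input is the following lemma, which I would prove by induction on the number of blocks: \emph{if a compact set of matrices is block upper triangular with relatively product bounded diagonal blocks, and the maximum of the diagonal joint spectral radii is attained by exactly one diagonal block, then the whole set is relatively product bounded.} The two-block case is the main calculation: for $A_i = \left(\begin{smallmatrix} X_i & B_i \\ 0 & Z_i\end{smallmatrix}\right)$ with block joint spectral radii $\alpha \neq \beta$, the off-diagonal entry of $A_{i_1}\cdots A_{i_n}$ is a sum of $n$ terms of the form $(X_{i_1}\cdots X_{i_{k-1}}) B_{i_k} (Z_{i_{k+1}}\cdots Z_{i_n})$, each bounded by a constant times $\alpha^{k-1}\beta^{n-k}$; the geometric sum $\sum_{k=1}^n \alpha^{k-1}\beta^{n-k} = (\alpha^n-\beta^n)/(\alpha-\beta)$ is $O(\max(\alpha,\beta)^n)$, giving the required relative product bound. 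The hypothesis of the lemma is supplied for our diagonal blocks by the classical fact that every irreducible compact set of matrices admits an extremal norm (e.g.\ \cite{RS,Jungers}), hence is relatively product bounded.

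Given the lemma, the rest of the argument is a careful selection of splitting point. Let $J := \{j : \varrho_j = 1\}$. Applying the lemma's contrapositive to $\mathsf{A}$ itself shows that the hypothesis that $\mathsf{A}$ is not relatively product bounded forces $|J| \geq 2$. Set $j^* := \min J$, take $V := V_{j^*}$, and let $\mathsf{A}^{(1)}, \mathsf{A}^{(2)}$ denote the induced actions on $V$ and $\mathbb{C}^d/V$. By minimality of $j^*$, the diagonal blocks of $\mathsf{A}^{(1)}$ have joint spectral radii $\varrho_1, \ldots, \varrho_{j^*-1} < 1$ and $\varrho_{j^*} = 1$, so the lemma applies to $\mathsf{A}^{(1)}$ and yields that $\mathsf{A}^{(1)}$ is relatively product bounded with $\varrho(\mathsf{A}^{(1)}) = 1$. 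Since $|J| \geq 2$ there exists $j^{**} \in J$ with $j^{**} > j^*$, so $\varrho(\mathsf{A}^{(2)}) \geq \varrho_{j^{**}} = 1$ and thus $\varrho(\mathsf{A}^{(2)}) = 1$, completing the construction.

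The main obstacle is the inductive cross-term estimate: although the two-block calculation is elementary, the borderline case $\alpha = \beta$ produces a spurious factor of $n$ in the geometric sum and destroys product-boundedness. This is precisely the obstruction which motivates the ``exactly one critical block'' hypothesis in the lemma, and hence the role of the count $|J|\geq 2$ in the main argument. The other ingredients — the composition series with irreducible quotients, and the existence of extremal norms for irreducible matrix sets — are classical and enter as black boxes.
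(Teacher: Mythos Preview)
Your argument is correct and rests on the same analytic core as the paper --- the geometric-sum cross-term estimate and the fact (Lemma~\ref{Elsnerredux} in contrapositive form) that irreducible compact matrix sets are relatively product bounded --- but is organised rather differently. The paper proceeds in two stages: first (Lemma~\ref{Els2}) it takes a \emph{maximal} proper common invariant subspace, so that the quotient action $\mathsf{A}^{(2)}$ is irreducible and hence relatively product bounded, and then the cross-term estimate forces $\varrho(\mathsf{A}^{(1)})=\varrho(\mathsf{A})$; second, among all triangularisations with $\varrho(\mathsf{A}^{(1)})=\varrho(\mathsf{A})$ it selects one with $\mathsf{A}^{(1)}$ of \emph{minimal} dimension, so that Lemma~\ref{Els2} applied to $\mathsf{A}^{(1)}$ would contradict minimality unless $\mathsf{A}^{(1)}$ is already relatively product bounded, and a second cross-term estimate then gives $\varrho(\mathsf{A}^{(2)})=\varrho(\mathsf{A})$. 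You instead pass directly to a full composition series and pick the splitting point from the position of the first critical diagonal block, with your key lemma being the contrapositive packaging of the same cross-term estimate. One point worth making explicit in your inductive step: when you split off a contiguous collection of sub-critical diagonal blocks, that collection need not itself be relatively product bounded (its internal cross-terms may carry a polynomial factor), but its joint spectral radius is strictly below $1$, so its products are bounded by $C\theta^n$ for some $\theta<1$, and that is all the two-block estimate actually requires. Your route is more explicit about the flag and makes the role of the count $|J|\geq 2$ very clean; the paper's extremal-subspace min/max argument avoids constructing the full composition series.
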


In order to prove Proposition \ref{ERXX} we require two lemmas. The following lemma is by now well-known to joint spectral radius researchers; it appears to originate in \cite{Ba}. For a proof we refer the reader to the book of R. Jungers \cite{Jungers}.
\begin{lemma}\label{Elsnerredux}
Suppose that $\mathsf{A}$ is not relatively product bounded. Then there exists a subspace $U \subset \mathbb{C}^d$ such that $0 < \dim U < d$ and $A_iU \subseteq U$ for all $A_i \in \mathsf{A}$.
\end{lemma}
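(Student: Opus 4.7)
My plan for Lemma \ref{Elsnerredux} is to prove the contrapositive: if $\mathsf{A}$ admits no proper non-trivial common invariant subspace, then $\mathsf{A}$ is relatively product bounded. This is the classical converse of the Rota--Strang observation linking extremal norms to product boundedness, and my proof would follow a standard Burnside-theoretic line. By rescaling each matrix by $\varrho(\mathsf{A})^{-1}$ -- using the fact that the hypothesis of non-product-boundedness forces $\varrho(\mathsf{A})>0$, since otherwise every element of $\mathsf{A}$ and every product thereof is nilpotent and Levitzki's theorem on semigroups of nilpotent matrices supplies the invariant subspace directly -- we may assume $\varrho(\mathsf{A})=1$. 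Relative product boundedness then amounts to ordinary boundedness of the semigroup $\mathsf{S}^*$ of all finite (possibly empty) products of elements of $\mathsf{A}$.

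Set $\vvv x \vvv := \sup_{P \in \mathsf{S}^*} \|Px\|$, which is subadditive and positively homogeneous but may take the value $+\infty$. Then $V := \{x \in \mathbb{C}^d : \vvv x \vvv < \infty\}$ is a linear subspace, and it is $\mathsf{A}$-invariant: for $x \in V$ and $A \in \mathsf{A}$, $\vvv Ax \vvv = \sup_{P \in \mathsf{S}^*}\|(PA)x\| \leq \vvv x \vvv$ since $PA \in \mathsf{S}^*$. On the finite-dimensional space $\mathbb{C}^d$ the uniform boundedness principle gives $V = \mathbb{C}^d$ if and only if $\sup_{P \in \mathsf{S}^*}\|P\| < \infty$, so under our non-product-boundedness hypothesis $V \neq \mathbb{C}^d$; if in addition $V \neq \{0\}$ the lemma follows immediately.

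The remaining case is $V = \{0\}$, which I would handle by dualising. Put $V^* := \{\phi \in (\mathbb{C}^d)^* : \sup_{P \in \mathsf{S}^*} \|\phi \circ P\| < \infty\}$; this is a subspace invariant under each adjoint $A^*$, and its annihilator in $\mathbb{C}^d$ is a proper non-zero $\mathsf{A}$-invariant subspace provided $V^* \notin \{\{0\}, (\mathbb{C}^d)^*\}$. The hard part will be excluding the pathological case $V = V^* = \{0\}$: here one invokes Burnside's theorem, which under the assumed irreducibility of $\mathsf{A}$ shows that the algebra generated by $\mathsf{A}$ coincides with $\Mat_d(\mathbb{C})$, and in particular furnishes a basis $Q_1,\dots,Q_{d^2}$ of $\Mat_d(\mathbb{C})$ drawn from $\mathsf{S}^*$. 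Combining this spanning property with compactness of $\mathsf{A}$ and a limit argument applied to normalised products $P_n/\|P_n\|$, where $\|P_n\| \to \infty$, one extracts either a non-zero vector $x$ with bounded $\mathsf{S}^*$-orbit or a non-zero covector $\phi$ with bounded $\mathsf{S}^*$-orbit. This last step, producing such a vector or covector, is the technical heart of the lemma and mirrors the standard Rota--Strang construction of an extremal norm in the irreducible case.
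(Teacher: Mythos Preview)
The paper does not prove this lemma: it cites it as well-known, attributing it to Barabanov and referring to Jungers' monograph for a proof. Your outline follows the classical route --- contrapositive: irreducible implies relatively product bounded, via Burnside's theorem --- which is precisely what those references do.

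Your presentation, however, carries unnecessary scaffolding and leaves the decisive step imprecise. The subspaces $V$ and $V^*$ are not needed. Once you have normalised to $\varrho(\mathsf{A})=1$ and invoked Burnside to extract a basis $Q_1,\dots,Q_{d^2}$ of $\Mat_d(\mathbb{C})$ from the product semigroup $\mathsf{S}^*$, the argument finishes directly: for any $P\in\mathsf{S}^*$ and any $i$ the product $Q_iP$ again lies in $\mathsf{S}^*$, whence $|\mathrm{tr}(Q_iP)|\le d\,\rho(Q_iP)\le d$ (since every finite product has spectral radius at most $\varrho(\mathsf{A})=1$), and since $M\mapsto(\mathrm{tr}(Q_iM))_{i}$ is a linear isomorphism on $\Mat_d(\mathbb{C})$ this bounds $\|P\|$ uniformly. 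Your description of the final step --- a limit argument on $P_n/\|P_n\|$ producing a nonzero vector or covector with bounded $\mathsf{S}^*$-orbit --- does not match this and is not obviously correct as phrased. What the limit argument actually yields, via the same trace estimate, is that any subsequential limit $L$ of $P_n/\|P_n\|$ satisfies $\mathrm{tr}(Q_iL)=0$ for all $i$ and hence $L=0$, contradicting $\|L\|=1$. That is a direct contradiction, not an extraction of a distinguished vector; so while the ingredients you name (Burnside, compactness, normalised limits) are the right ones, your account of how they combine is off.
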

The statement of the following lemma was suggested to the author during the reviewing process for the article \cite{Madv} by an anonymous reviewer. To the best of the author's knowledge it has not previously appeared in print.
\begin{lemma}\label{Els2}
Suppose that $\mathsf{A}$ is not relatively product bounded. Then there exists an upper triangularisation $\left(\mathsf{A}^{(1)},\mathsf{A}^{(2)},\mathsf{B},M\right)$ for $\mathsf{A}$ such that $\varrho(\mathsf{A}^{(1)})=\varrho(\mathsf{A})$.
\end{lemma}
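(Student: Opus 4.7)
The plan is to proceed by induction on the dimension $d$. The case $d=1$ is vacuous: every compact set of scalars is automatically relatively product bounded via $|a_1\cdots a_n| \leq \varrho(\mathsf{A})^n$. The case $\varrho(\mathsf{A}) = 0$ is also immediate, since then $\varrho(\mathsf{A}^{(1)}) \leq \varrho(\mathsf{A}) = 0$ forces equality for any triangularisation; so I assume $\varrho(\mathsf{A}) > 0$ and $d \geq 2$. Apply Lemma \ref{Elsnerredux} to obtain a common invariant subspace $U \subset \mathbb{C}^d$ with $0 < k := \dim U < d$, and extend a basis of $U$ to a basis of $\mathbb{C}^d$ to produce an initial upper block triangularisation $(\mathsf{A}^{(1)}, \mathsf{A}^{(2)}, \mathsf{B}, M)$, with $\mathsf{A}^{(1)}$ of size $k$ acting on $U$ and $\mathsf{A}^{(2)}$ of size $d-k$ acting on $\mathbb{C}^d/U$. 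Since the diagonal blocks of a product $A_{i_n}\cdots A_{i_1}$ are themselves products in $\mathsf{A}^{(1)}$ and $\mathsf{A}^{(2)}$, the inequalities $\varrho(\mathsf{A}^{(1)}), \varrho(\mathsf{A}^{(2)}) \leq \varrho(\mathsf{A})$ are immediate. If $\varrho(\mathsf{A}^{(1)}) = \varrho(\mathsf{A})$ we are done; otherwise $\varrho(\mathsf{A}^{(1)}) < \varrho(\mathsf{A})$, and I claim that $\varrho(\mathsf{A}^{(2)}) = \varrho(\mathsf{A})$ and that $\mathsf{A}^{(2)}$ fails to be relatively product bounded.

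This sub-claim is the main technical obstacle, and I would prove it by contradiction. If $\mathsf{A}^{(2)}$ were relatively product bounded, I would bound each block of a product $A_{i_n}\cdots A_{i_1}$ separately. Fix $\varepsilon > 0$ with $\varrho(\mathsf{A}^{(1)}) + \varepsilon < \varrho(\mathsf{A})$. A uniform Gelfand-type bound for the compact family $\mathsf{A}^{(1)}$ gives $\|A^{(1)}_{i_n}\cdots A^{(1)}_{i_1}\| \leq C_1(\varrho(\mathsf{A}^{(1)}) + \varepsilon)^n$; the lower-right block is bounded by $C_2\varrho(\mathsf{A})^n$ by hypothesis; and the off-diagonal block, being the cocycle sum $\sum_{k=1}^{n} A^{(1)}_{i_n}\cdots A^{(1)}_{i_{k+1}}\, B_{i_k}\, A^{(2)}_{i_{k-1}}\cdots A^{(2)}_{i_1}$, is estimated term-by-term and summed geometrically; the ratio $(\varrho(\mathsf{A}^{(1)}) + \varepsilon)/\varrho(\mathsf{A}) < 1$ then closes the bound at $O(\varrho(\mathsf{A})^n)$. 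Combining the three estimates would make $\mathsf{A}$ relatively product bounded, contradicting the hypothesis. The uniformity of the Gelfand bound across all infinite sequences, which comes from the compactness of $\mathsf{A}^{(1)}$, is what makes the geometric summation close; without it the argument would fail.

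Granted the sub-claim, the inductive hypothesis applies to $\mathsf{A}^{(2)}$, a compact set of $(d-k)\times(d-k)$ matrices indexed in $\mathcal{I}$ via the continuous map $i \mapsto A_i^{(2)}$, with positive joint spectral radius and not relatively product bounded. It returns an invariant subspace $\widetilde U \subset \mathbb{C}^d/U$ of proper nonzero dimension on which $\mathsf{A}^{(2)}$ has joint spectral radius $\varrho(\mathsf{A})$. Its preimage $U' \subset \mathbb{C}^d$ under the quotient map is $\mathsf{A}$-invariant with $U \subsetneq U' \subsetneq \mathbb{C}^d$, and $\varrho(\mathsf{A}|_{U'}) \geq \varrho(\mathsf{A}|_{U'/U}) = \varrho(\mathsf{A})$ by the same diagonal-block-is-submatrix observation applied now to $\mathsf{A}|_{U'}$. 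Choosing a basis of $\mathbb{C}^d$ extending a basis of $U'$ now produces the desired upper triangularisation of $\mathsf{A}$, whose top block $\mathsf{A}|_{U'}$ has joint spectral radius exactly $\varrho(\mathsf{A})$.
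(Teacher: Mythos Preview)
Your argument is correct but follows a genuinely different route from the paper. You proceed by induction on $d$: pick any invariant subspace, and if the restriction $\mathsf{A}^{(1)}$ does not carry the full joint spectral radius, use the block-product geometric estimate to show that the quotient $\mathsf{A}^{(2)}$ fails to be relatively product bounded, then recurse into it and lift the resulting subspace back up. The paper instead dispenses with induction by choosing at the outset an invariant subspace $U$ of \emph{maximal} dimension; the quotient action $\mathsf{A}^{(2)}$ is then irreducible (any nontrivial invariant subspace for it would lift to a strictly larger one for $\mathsf{A}$), hence relatively product bounded by Lemma~\ref{Elsnerredux}, and the very same geometric estimate---now with the roles of the two blocks reversed from yours---forces $\varrho(\mathsf{A}^{(1)}) = \varrho(\mathsf{A})$ in a single step. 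The paper's device of maximising $\dim U$ is cleaner and shorter; your recursive version is perfectly valid and arguably closer to how one would discover the result by hand. One small point: your sub-claim asserts both that $\mathsf{A}^{(2)}$ is not relatively product bounded and that $\varrho(\mathsf{A}^{(2)}) = \varrho(\mathsf{A})$, but the contradiction argument you sketch delivers only the former; the latter (which you need so that the induction returns the value $\varrho(\mathsf{A})$ rather than merely $\varrho(\mathsf{A}^{(2)})$) follows from the same block estimate---if both diagonal blocks had strictly smaller joint spectral radius the products would grow strictly slower than $\varrho(\mathsf{A})^n$---and deserves one explicit line.
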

\begin{proof}
By Lemma \ref{Elsnerredux} there exists a subspace $U$ of $\mathbb{C}^d$ such that $A_iU \subseteq U$ for all $i\in \mathcal{I}$, and $0 < \dim U <d$. Let $r$ be the largest possible dimension of such a subspace $U$, and fix an invariant subspace $U$ of that dimension. Let $v_1,\ldots,v_d$ be a basis for $\mathbb{C}^d$ having the property that $v_1,\ldots,v_r$ is a basis for $U$, and let $M$ be the corresponding change-of-basis matrix. 
Clearly for each $i \in \mathcal{I}$ we have
\[M^{-1}A_iM = \left(\begin{array}{cc}A_i^{(1)}&B_i\\0&A_i^{(2)}\end{array}\right)\]
for some $r \times r$ matrix $A_i^{(1)}$, $r\times (d-r)$ matrix $B_i$, and $(d-r)\times (d-r)$ matrix $A_i^{(2)}$. Define $\mathsf{B}=\left\{B_i \colon i \in \mathcal{I}\right\}$ and $\mathsf{A}^{(k)}=\left\{A_i^{(k)} \colon i \in \mathcal{I}\right\}$ for $k=1,2$, so that $\left(\mathsf{A}^{(1)},\mathsf{A}^{(2)},\mathsf{B},M\right)$ is an upper triangularisation of $\mathsf{A}$. 

We claim that there does not exist a subspace $V$ of $\mathbb{C}^{d-r}$ such that $A_i^{(2)} V \subseteq V$ for all $i \in \mathcal{I}$, and $0 < \dim V < d-r$. To see this it suffices to note that if such a subspace $V$ were to exist, then $U \oplus MV$ 
would be an invariant subspace for $\mathsf{A}$ with dimension strictly between $r$ and $d$, which is impossible by the definition of $r$. We conclude via Lemma \ref{Elsnerredux} that $\mathsf{A}^{(2)}$ is relatively product bounded.

We may now prove that $\varrho\left(\mathsf{A}^{(1)}\right)=\varrho(\mathsf{A})$. For a contradiction let us assume that $\varrho\left(\mathsf{A}^{(1)}\right)<\varrho(\mathsf{A})$. Since $\mathsf{A}^{(2)}$ is relatively product bounded with joint spectral radius not greater than $\varrho(\mathsf{A})$, and since $\varrho\left(\mathsf{A}^{(1)}\right)<\varrho(\mathsf{A})$, we may choose constants $C_1,C_2>1$ and $\theta \in (0,\varrho(\mathsf{A}))$ such that for all  $n \geq 1$ we have $\left|A_{i_1}^{(1)}\cdots A_{i_n}^{(1)}\right| \leq C \theta^n$ and $\left|A_{i_1}^{(2)}\cdots A_{i_n}^{(2)}\right| \leq C_2 \varrho(\mathsf{A})^n$ for every $(i_1,\ldots,i_n)\in\mathcal{I}^n$. Using the fact that $\mathsf{B}$ is compact, we may choose a constant $C_3>1$ such that if $L_1 \in \Mat_r(\mathbb{C})$ and $L_2 \in \Mat_{d-r}(\mathbb{C})$ then $|L_1 B_i L_2| \leq C_3 |L_1|.|L_2|$ for all $i \in \mathcal{I}$. As such, for every $n \geq 1$ and $(i_1,\ldots,i_n) \in \mathcal{I}^n$ we have
\[\left|M^{-1}A_{i_1}\cdots A_{i_n}M\right| = \left| \left(\begin{array}{cc}A^{(1)}_{i_1}\cdots A^{(1)}_{i_n}&\sum_{k=1}^n A_{i_1}^{(1)} \cdots A_{i_{k-1}}^{(1)} B_{i_k} A_{i_{k+1}}^{(2)} \cdots A_{i_n}^{(2)}\\ 0&A^{(2)}_{i_1}\cdots A^{(2)}_{i_n}\end{array}\right)\right|\]
\[\leq \max\left\{\left|A^{(1)}_{i_1}\cdots A^{(1)}_{i_n}\right|, \left|A^{(2)}_{i_1}\cdots A^{(2)}_{i_n}\right|, \sum_{k=1}^n \left|A_{i_1}^{(1)} \cdots A_{i_{k-1}}^{(1)} B_{i_k} A_{i_{k+1}}^{(2)} \cdots A_{i_n}^{(2)}\right|\right\}.\]
Since
\begin{align*}\sum_{k=1}^n \left|A_{i_1}^{(1)} \cdots A_{i_{k-1}}^{(1)} B_{i_k} A_{i_{k+1}}^{(2)} \cdots A_{i_n}^{(2)}\right| &\leq \sum_{k=1}^n C_3\left|A_{i_1}^{(1)} \cdots A_{i_{k-1}}^{(1)}\right|.\left|A_{i_{k+1}}^{(2)} \cdots A_{i_n}^{(2)}\right|\\
&\leq \sum_{k=1}^n C_1C_2C_3 \theta^{k-1} \varrho\left(\mathsf{A}\right)^{n-k}\\
&<C_1C_2C_3 \varrho(\mathsf{A})^{n-1}\sum_{k=0}^\infty \left(\frac{\theta}{\varrho(\mathsf{A})}\right)^k\\
&= \varrho(\mathsf{A})^n.\frac{C_1C_2C_3}{\varrho(\mathsf{A})-\theta}\end{align*}
and
\[\max\left\{\left|A^{(1)}_{i_1}\cdots A^{(1)}_{i_n}\right|, \left|A^{(2)}_{i_1}\cdots A^{(2)}_{i_n}\right|\right\}\leq \varrho(\mathsf{A})^n\max\{C_1,C_2\},\]
we conclude that
\[\max\left\{\varrho(\mathsf{A})^{-n}|M^{-1}A_{i_1}\cdots A_{i_n}M|\colon (i_1,\ldots,i_n) \in \mathcal{I}^n\right\} \leq \max\left\{\frac{C_1C_2C_3}{\varrho(\mathsf{A})-\theta},C_1,C_2\right\}\]
for every $n \geq 1$. The invertibility of $M$ implies that the functional $A \mapsto |M^{-1}AM|$ is a norm on $\Mat_d(\mathbb{C})$, and hence we have shown that $\mathsf{A}$ is relatively product bounded. This contradicts the hypotheses of the lemma, and we conclude that necessarily $\varrho\left(\mathsf{A}^{(1)}\right)=\varrho(\mathsf{A})$. The proof is complete.
\end{proof}

\begin{proof}[of Proposition \ref{ERXX}] 
Let $\left(\mathsf{A}^{(1)},\mathsf{A}^{(2)},\mathsf{B},M\right)$ be an upper triangularisation of $\mathsf{A}$ such that $\varrho\left(\mathsf{A}^{(1)}\right)=\varrho(\mathsf{A})$, which exists by Lemma \ref{Els2}, and suppose that the matrices comprising $\mathsf{A}^{(1)}$ have the smallest possible dimension $r$ for which this relationship can hold. It follows in particular that there cannot exist an upper triangularisation $\left(\mathsf{C}^{(1)},\mathsf{C}^{(2)},\mathsf{D},\tilde M\right)$ of $\mathsf{A}^{(1)}$ such that $\varrho\left(\mathsf{A}^{(1)}\right)=\varrho\left(\mathsf{C}^{(1)}\right)$, since this would lead to a new upper triangularisation of $\mathsf{A}$ in which the upper-left matrices have smaller than the minimum possible dimension. By Lemma \ref{Els2} it follows that $\mathsf{A}^{(1)}$ must be relatively product bounded.

Now let us suppose for a contradiction that $\varrho\left(\mathsf{A}^{(2)}\right)<\varrho(\mathsf{A})$. We make estimates similar to those in the proof of Lemma \ref{Els2}. Choose a constant $C_3>1$ such that if $L_1 \in \Mat_r(\mathbb{C})$, $L_2 \in \Mat_{d-r}(\mathbb{C})$ and $i \in \mathcal{I}$ then $|L_1B_iL_2| \leq C_3|L_1|.|L_2|$, and choose constants $C_1,C_2>1$ and $\theta<\varrho(\mathsf{A})$ such that for each $n \geq 1$ and $(i_1,\ldots,i_n)\in\mathcal{I}^n$ we have $\left|A_{i_1}^{(1)}\cdots A_{i_n}^{(1)}\right| \leq C_1 \varrho(\mathsf{A})^n$ and $\left|A_{i_1}^{(2)}\cdots A_{i_n}^{(2)}\right| \leq C_2 \theta^n$. Given any $n \geq 1$ and $(i_1,\ldots,i_n)\in\mathcal{I}^n$, the quantity
\[ \max\left\{\left|A^{(1)}_{i_1}\cdots A^{(1)}_{i_n}\right|, \left|A^{(2)}_{i_1}\cdots A^{(2)}_{i_n}\right|, \sum_{k=1}^n \left|A_{i_1}^{(1)} \cdots A_{i_{k-1}}^{(1)} B_{i_k} A_{i_{k+1}}^{(2)} \cdots A_{i_n}^{(2)}\right|\right\}\]
is as before an upper bound for $\left|M^{-1}A_{i_1}\cdots A_{i_n}M\right|$, and in a similar manner to the previous proof we obtain the estimates
\[\sum_{k=1}^n \left|A_{i_1}^{(1)} \cdots A_{i_{k-1}}^{(1)} B_{i_k} A_{i_{k+1}}^{(2)} \cdots A_{i_n}^{(2)}\right|\leq \sum_{k=1}^nC_1C_2C_3 \varrho(\mathsf{A})^{k-1}\theta^{n-k}<\varrho(\mathsf{A})^n\frac{C_1C_2C_3}{\varrho(\mathsf{A})-\theta}\]
and
\[\max\left\{\left|A^{(1)}_{i_1}\cdots A^{(1)}_{i_n}\right|, \left|A^{(2)}_{i_1}\cdots A^{(2)}_{i_n}\right|\right\}\leq \varrho(\mathsf{A})^n\max\{C_1,C_2\}.\]
It follows as before that the quantity
\[\max\left\{\varrho(\mathsf{A})^{-n}\|A_{i_1}\cdots A_{i_n}\|\colon (i_1,\ldots,i_n) \in \mathcal{I}^n\right\}\]
is bounded independently of $n$, contradicting the hypothesis that $\mathsf{A}$ is not relatively product bounded. We conclude that necessarily $\varrho\left(\mathsf{A}^{(2)}\right)=\varrho(\mathsf{A})$ as claimed. This completes the proof of the proposition.
\end{proof}

The second major result of this section is the following, which allows us to apply the properties of the upper triangularisations given by Proposition \ref{ERXX} to the comparison of growth rates along certain sequences.

\begin{proposition}\label{ergmeasures}
Let $\mathsf{A}$ be a compact set of matrices indexed in $\mathcal{I}$, and suppose that $\left(\mathsf{A}^{(1)}, \mathsf{A}^{(2)}, \mathsf{B}, M\right)$ is an upper triangularisation of $\mathsf{A}$. Then for every $\mu \in \mathcal{E}_\sigma$,
\[\inf_{n \geq 1}\frac{1}{n}\int\log \|\mathcal{L}_{\mathsf{A}}(x,n)\|\,d\mu(x) = \max_{j=1,2} \inf_{n \geq 1}\frac{1}{n}\int\log \|\mathcal{L}_
{\mathsf{A}^{(j)}}(x,n)\|\,d\mu(x).\]
\end{proposition}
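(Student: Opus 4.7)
Write $\lambda := \inf_{n\geq 1} \tfrac{1}{n}\int \log\|\mathcal{L}_\mathsf{A}(x,n)\|\,d\mu(x)$ and $\lambda_j := \inf_{n\geq 1} \tfrac{1}{n}\int \log\|\mathcal{L}_{\mathsf{A}^{(j)}}(x,n)\|\,d\mu(x)$ for $j=1,2$, and set $\Lambda := \max(\lambda_1,\lambda_2)$. By the subadditive ergodic theorem (Theorem~\ref{SAET}), these infima coincide with the corresponding $\mu$-a.e.\ limits of $\tfrac{1}{n}\log\|\mathcal{L}_\mathsf{A}(x,n)\|$ and $\tfrac{1}{n}\log\|\mathcal{L}_{\mathsf{A}^{(j)}}(x,n)\|$, so the claim reduces to $\lambda = \Lambda$. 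The easy inequality $\lambda \geq \Lambda$ is immediate: since $\mathcal{L}_{\mathsf{A}^{(j)}}(x,n)$ appears as a diagonal block of $M^{-1}\mathcal{L}_\mathsf{A}(x,n)M$, we have $\|\mathcal{L}_{\mathsf{A}^{(j)}}(x,n)\| \leq \|M\|\,\|M^{-1}\|\,\|\mathcal{L}_\mathsf{A}(x,n)\|$, and integration yields $\lambda_j \leq \lambda$ for each $j$.

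For the reverse inequality $\lambda \leq \Lambda$, my plan is to use a one-parameter family of adapted norms on $\mathbb{C}^d$: for each $\rho > 0$, set $\|(v_1,v_2)\|_\rho := \|v_1\| + \rho\|v_2\|$ in the decomposition $\mathbb{C}^d = \mathbb{C}^r \oplus \mathbb{C}^{d-r}$ matching the block structure. A direct calculation shows that the induced operator norm of any block upper triangular matrix $T = \begin{pmatrix} A & B \\ 0 & D \end{pmatrix}$ satisfies $\|T\|_\rho \leq \max(\|A\|,\,\|D\|+\|B\|/\rho)$. Since $\|\cdot\|_\rho$ is equivalent to the Euclidean norm for every fixed $\rho$, Theorem~\ref{SAET} applied to the submultiplicative cocycle $n \mapsto \log\|M^{-1}\mathcal{L}_\mathsf{A}(\cdot,n)M\|_\rho$ yields $\lambda = \inf_{n\geq 1}\tfrac{1}{n}\int\log\|M^{-1}\mathcal{L}_\mathsf{A}(y,n)M\|_\rho\,d\mu(y)$ for every $\rho > 0$, hence $\lambda \leq \tfrac{1}{N}\int\log\|M^{-1}\mathcal{L}_\mathsf{A}(y,N)M\|_\rho\,d\mu(y)$ for every $N \geq 1$ and every $\rho > 0$. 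The crux is to evaluate this inequality with a weight $\rho = \rho_N$ scaled to $N$.

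Fix $\varepsilon,\delta > 0$ and let $M_0 \geq 1$ be a uniform bound on the norms of the matrices in $\mathsf{A}$, $\mathsf{B}$ and the blocks $\mathsf{A}^{(1)},\mathsf{A}^{(2)}$. Egorov's theorem applied to the a.e.\ convergences $\tfrac{1}{N}\log\|\mathcal{L}_{\mathsf{A}^{(j)}}(y,N)\| \to \lambda_j$ ($j=1,2$) produces a set $E\subseteq\Sigma_\mathcal{I}$ with $\mu(E^c) < \delta$ and an integer $N_0$ such that $\|\mathcal{L}_{\mathsf{A}^{(j)}}(y,N)\| \leq e^{N(\lambda_j+\varepsilon)}$ for all $y\in E$, $N\geq N_0$ and $j=1,2$. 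Setting $\rho_N := (M_0/e^{\Lambda+\varepsilon})^N$, the telescoping sum formula for the off-diagonal block $C_N(y)$ of $M^{-1}\mathcal{L}_\mathsf{A}(y,N)M$ gives the uniform estimate $\|C_N(y)\| \leq NM_0^N$, so $\|C_N(y)\|/\rho_N \leq N\cdot e^{N(\Lambda+\varepsilon)}$ for every $y$. The single-step inequality for $\|\cdot\|_{\rho_N}$ then produces $\|M^{-1}\mathcal{L}_\mathsf{A}(y,N)M\|_{\rho_N} \leq (N+1)\cdot e^{N(\Lambda+\varepsilon)}$ on $E$, while the universal bound $\|\mathcal{L}_{\mathsf{A}^{(j)}}(y,N)\| \leq M_0^N$ gives $\|M^{-1}\mathcal{L}_\mathsf{A}(y,N)M\|_{\rho_N} \leq M_0^N + N\cdot e^{N(\Lambda+\varepsilon)}$ throughout $\Sigma_\mathcal{I}$.

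Splitting $\int_{\Sigma_\mathcal{I}} = \int_E + \int_{E^c}$, bounding each piece by the respective estimate, and dividing by $N$ gives
\[\lambda \leq (\Lambda+\varepsilon) + \delta\log M_0 + O\bigl(\tfrac{\log N}{N}\bigr).\]
Letting $N\to\infty$, then $\delta\to 0$, then $\varepsilon\to 0$ in sequence produces $\lambda \leq \Lambda$, completing the proof. The main technical obstacle is the tuning of $\rho_N$: it must scale with $N$ quickly enough for the off-diagonal contribution $\|C_N\|/\rho_N$ to be absorbed into $e^{N(\Lambda+\varepsilon)}$, yet the cocycle infimum identity must still be applied with $\rho_N$ depending on $N$. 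This is permissible because the norm-equivalence constants between $\|\cdot\|_{\rho_N}$ and the Euclidean norm are absorbed into the $\tfrac{1}{n}\log$ limit for each \emph{fixed} $\rho$, so the weight may be independently optimised at each $N$ appearing in the infimum formula.
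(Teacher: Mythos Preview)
Your proof is correct and takes a genuinely different route from the paper's. The paper isolates an abstract ergodic lemma (Lemma~\ref{dfbugsdhfui}): passing to the natural extension so that $T$ becomes invertible, it applies the subadditive ergodic theorem \emph{backwards} to the sequence $f^1_n\circ T^{-n}$, thereby controlling each summand $\|\mathcal{L}_{\mathsf{A}^{(1)}}(\sigma^kx,n-k)\|$ in the telescoping expansion of the off-diagonal block. This yields the pointwise estimate $\liminf_{n\to\infty}\tfrac{1}{n}\log|M^{-1}\mathcal{L}_{\mathsf{A}}(x,n)M|\le\max\{\lambda_1,\lambda_2\}$ $\mu$-a.e., and the conclusion follows from one more application of Theorem~\ref{SAET}. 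By contrast, you never estimate the off-diagonal block sharply: you bound it crudely by $NM_0^N$ and then absorb this by rescaling the second summand with the $N$-dependent weight $\rho_N$. The trade-off is that the paper's lemma gives an a.e.\ pointwise bound of possible independent use, whereas your argument is integrated throughout and relies on Egorov to handle the exceptional set; in exchange you avoid the natural extension and the auxiliary lemma entirely, which is more elementary.

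Two small points. First, the case $\Lambda=-\infty$ is not covered as written, since $e^{\Lambda+\varepsilon}$ and the choice of $\rho_N$ are then undefined; this is easily repaired by running the argument with $-K$ in place of $\Lambda+\varepsilon$ for arbitrary $K>0$. Second, your final displayed inequality is slightly imprecise when $\Lambda+\varepsilon<0$ (the factor $\mu(E)\le 1$ goes the wrong way), but the correct estimate $\mu(E)(\Lambda+\varepsilon)+\mu(E^c)\max(\log M_0,\Lambda+\varepsilon)+O(\tfrac{\log N}{N})$ still tends to $\Lambda$ under your triple limit, so the conclusion survives.
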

In order to prove the proposition we require the following abstract lemma:
\begin{lemma}\label{dfbugsdhfui}
Let $(X,\mathcal{F},\nu)$ be a probability space and let $T \colon X \to X$ be an ergodic measure-preserving transformation. Suppose that $(f_n^1)$, $(f^2_n)$ are sequences of measurable functions from $X$ to $\mathbb{R} \cup \{-\infty\}$ such that $f_{n+m}^j \leq f_n^j \circ T^m + f_m^j$ $\nu$-a.e. for all $n, m \geq 1$, for each $j \in\{1,2\}$, and such that each $f_n^j$ is bounded above by an integrable function. Define $\lambda_j:=\inf_{n \geq 1}\frac{1}{n}\int f_n^j\,d\nu$ and $f_0^j \equiv 0$ for each $j$. Then for $\nu$-a.e. $x \in X$,
\[\liminf_{n \to \infty}\frac{1}{n}\max_{1 \leq k \leq n}\left(f_{n-k}^1(T^kx) + f_{k-1}^2(x)\right) \leq \max\{\lambda_1,\lambda_2\}.\]
\end{lemma}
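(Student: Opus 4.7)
The plan is to fold the two subadditive sequences into a single Kingman-subadditive cocycle via a max-plus construction, then extract the bound from applying the subadditive ergodic theorem at the diagonal time $m=n$.

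Set $h_n(x) := \max_{1 \leq k \leq n}(f_{n-k}^1(T^k x) + f_{k-1}^2(x))$ and $\tilde h_n(x) := \max\{h_n(x),\, f_n^1(x),\, f_n^2(x)\}$. The first step is to verify the functional recursion
\[h_{n+m}(x) \leq \max\{h_n(x) + f_m^1(T^n x),\ h_m(T^n x) + f_n^2(x)\},\]
obtained by splitting the index $k$ in the definition of $h_{n+m}$ into $\{1, \ldots, n\}$ and $\{n+1, \ldots, n+m\}$, and applying subadditivity of $f^1$ in the former range (to peel off a factor $f_m^1(T^n x)$) and subadditivity of $f^2$ in the latter range (after writing $k=n+\ell$, to peel off $f_n^2(x)$). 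Combined with the individual subadditivities of $f^1$ and $f^2$ and the estimates $h_n, f_n^j \leq \tilde h_n$, this gives the Kingman-subadditive inequality $\tilde h_{n+m}(x) \leq \tilde h_n(x) + \tilde h_m(T^n x)$ $\nu$-a.e. The integrable dominations on $(f_n^j)$ transfer to $(\tilde h_n)$ via $f_n^j(x) \leq \sum_{i=0}^{n-1}(f_1^j)^+(T^i x)$.

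Applying Theorem \ref{SAET} to $(\tilde h_n)$ and to each $(f_n^j)$ yields a constant $\tilde\mu^*$ with $\tilde h_n/n \to \tilde\mu^*$ and $f_n^j/n \to \lambda_j$, both $\nu$-a.e.\ and in $L^1$-norm. Since $\tilde h_n \geq f_n^j$, we immediately have $\tilde\mu^* \geq \max\{\lambda_1,\lambda_2\}$. The decisive step is to specialise the recursion to $m=n$ and to incorporate $f_{2n}^j \leq f_n^j + f_n^j \circ T^n$ from the definition of $\tilde h_{2n}$:
\[\tilde h_{2n}(x) \leq \max\bigl\{\tilde h_n(x) + f_n^1(T^n x),\ \tilde h_n(T^n x) + f_n^2(x),\ f_n^1(x) + f_n^1(T^n x),\ f_n^2(x) + f_n^2(T^n x)\bigr\}.\]
Dividing by $2n$, the LHS tends a.e.\ to $\tilde\mu^*$; the non-diagonal ratios $\tilde h_n(x)/(2n)$ and $f_n^j(x)/(2n)$ tend a.e.\ to $\tilde\mu^*/2$ and $\lambda_j/2$; and the diagonal quantities $\tilde h_n(T^n x)/n$ and $f_n^j(T^n x)/n$ converge to $\tilde\mu^*$ and $\lambda_j$ respectively in $L^1$-norm (via $T^n$-invariance of $\nu$ applied to the $L^1$ statement in Kingman), hence in probability. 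Passing to a common subsequence $n_\ell \to \infty$ along which all four diagonal convergences hold a.e., the limiting inequality
\[\tilde\mu^* \leq \max\bigl\{\tfrac{1}{2}\tilde\mu^* + \tfrac{1}{2}\lambda_1,\ \tfrac{1}{2}\tilde\mu^* + \tfrac{1}{2}\lambda_2,\ \lambda_1,\ \lambda_2\bigr\}\]
follows. Combined with $\tilde\mu^* \geq \max\{\lambda_1,\lambda_2\}$, the first two entries of the max dominate and a line of algebra forces $\tilde\mu^* = \max\{\lambda_1,\lambda_2\}$. Since $h_n \leq \tilde h_n$, we conclude $\limsup_n h_n(x)/n \leq \max\{\lambda_1,\lambda_2\}$ $\nu$-a.e., which is strictly stronger than the lemma's liminf assertion.

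The main obstacle is handling the diagonal quantities $\tilde h_n(T^n x)/n$ and $f_n^j(T^n x)/n$: Kingman's theorem supplies a.e.\ convergence of $f_n^j(y)/n$ at fixed $y$, not along the moving target $y = T^n x$. The $L^1$ statement in Kingman combined with the measure-preservation of $T^n$ provides convergence in probability for these diagonal sequences, and a diagonal subsequence extraction then upgrades this to a.e.\ convergence along one subsequence, which is all that is needed to close the limiting argument. Once this technicality is absorbed, the core of the proof is the short combinatorial verification that the max-plus cocycle $\tilde h_n$ is Kingman-subadditive.
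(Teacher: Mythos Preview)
Your proof is correct and takes a genuinely different route from the paper's. The paper passes to the natural extension to make $T$ invertible, applies the subadditive ergodic theorem forwards to $(f_n^2)$ and backwards to $(f_n^1\circ T^{-n})$, and then controls the maximum via the ``overshoot'' functions $C_\ell^j(x)=\sup_{n\geq 0}\bigl(f_n^j(\cdot)-n(\lambda_j+\ell^{-1})\bigr)$; ergodicity is invoked only at the end, to force $C_\ell^1(T^nx)\leq K$ infinitely often, which is why only a $\liminf$ bound is obtained. Your max-plus construction $\tilde h_n=\max\{h_n,f_n^1,f_n^2\}$ instead packages the problem as a single Kingman-subadditive cocycle and identifies its growth rate by the doubling inequality. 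This avoids the natural extension entirely and yields the stronger $\limsup$ conclusion. The trade-off is that you need the $L^1$ convergence clause of Kingman's theorem (to control the diagonal terms $f_n^j(T^nx)/n$ and $\tilde h_n(T^nx)/n$), whereas the paper's argument uses only the almost-everywhere statement recorded in Theorem~\ref{SAET}. Two small points worth tightening: the $L^1$ clause requires $f_1^j\in L^1$, which follows from the hypotheses only once you observe that $\lambda_j>-\infty$ forces $\int f_n^j>-\infty$; and the degenerate cases $\lambda_j=-\infty$ should be treated separately (a short convergence-in-measure argument suffices, since $f_n^j/n\to-\infty$ a.e.\ still gives $f_n^j(T^n\cdot)/n\to-\infty$ in probability).
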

\begin{proof}
By passing to the natural extension of $(X,T)$ if necessary, we may without loss of generality assume that $T$ is invertible. We will show that for each integer $\ell \geq 1$,
\begin{equation}\label{ello}\nu\left(\left\{x \in X \colon \liminf_{n \to \infty} \frac{1}{n}\max_{1 \leq k \leq n}\left(f^1_{n-k}(T^kx) + f_{k-1}^2(x)\right) \leq \max\{\lambda_1,\lambda_2\}+\frac{1}{\ell}\right\}\right)=1,\end{equation}
which suffices to prove the lemma.

Applying Theorem \ref{SAET} to the sequence $(f_n^2)$ and the transformation $T$ we find that $n^{-1}f_n^2 \to \lambda_2$ $\nu$-almost everywhere. On the other hand, our hypotheses imply that $f^1_{n+m} \circ T^{-n-m} \leq f_n^1 \circ T^{-n-m} + f_m^1 \circ T^{-m}$ almost everywhere for each $n,m \geq 1$, and hence we may apply Theorem \ref{SAET} to the sequence $(f_n^1 \circ T^{-n})$ and the transformation $T^{-1}$ to deduce that $f_n^1 \circ T^{-n} \to \lambda_1$ $\nu$-a.e. Fix $\ell \geq 1$, and for each $x \in X$ define $C_\ell^1(x):=\sup\{f_n^1(T^{-n}x)-n(\lambda_1+\ell^{-1}) \colon n\geq 0\}$ and $C_\ell^2(x):=\sup\{f_n^2(x)-n(\lambda_2+\ell^{-1}) \colon n \geq 0\}$. Note that each $C_\ell^j(x)$ is finite for $\nu$-a.e. $x \in X$. For each $x \in X$ and $n \geq 1$ we then have
\begin{eqnarray}\label{gank}\lefteqn{\max_{1 \leq k \leq n} \left(f_{n-k}^1(T^kx) + f_{k-1}^2(x)\right) }\\
& \leq & \max_{1 \leq k \leq n} \left(C^1_\ell(T^nx) + (n-k)\left(\lambda_2+\ell^{-1}\right) + C^2_\ell(x)+(k-1)\left(\lambda_1+\ell^{-1}\right)\right)\nonumber\\
& = &  C^1_\ell(T^nx)+C^2_\ell(x)+(n-1)\max\{\lambda_1,\lambda_2\}+(n-1)\ell^{-1}.\nonumber\end{eqnarray}
Now choose some $K \in \mathbb{R}$ such that $\nu(\{x \in X \colon C_\ell^1(x) \leq K\})>0$. By ergodicity we have $C_\ell^1(T^nx) \leq K$ infinitely often for $\nu$-a.e. $x$, and it follows that $\liminf_{n \to \infty} n^{-1}C^1_\ell(T^nx) \leq 0$ $\nu$-a.e. Combining this fact with the inequality \eqref{gank} yields \eqref{ello},  and since $\ell \geq 1$ is arbitrary the lemma follows.
\end{proof}
\begin{proof}[ of Proposition \ref{ergmeasures}.] Let us define
\[\lambda:=\inf_{n \geq 1}\frac{1}{n}\int \log\|\mathcal{L}_{\mathsf{A}}(x,n)\|\,d\mu(x)\]
and
\[\lambda_j:=\inf_{n \geq 1}\frac{1}{n}\int \log\|\mathcal{L}_{\mathsf{A}^{(i)}}(x,n)\|\,d\mu(x)\]
for $j=1,2$. From the definition of upper triangularisation, we have for each $n \geq 1$ and all $x \in \Sigma_{\mathcal{I}}$,
\begin{equation}\label{balls}|M^{-1}\mathcal{L}_{\mathsf{A}}(x,n)M| = \left| \left(\begin{array}{cc}\mathcal{L}_{\mathsf{A}^{(1)}}(x,n)&\sum_{k=1}^n \mathcal{L}_{\mathsf{A}^{(1)}}(\sigma^kx,n-k) B_{x_k} \mathcal{L}_{\mathsf{A}^{(2)}}(x,k-1)\\ 0&\mathcal{L}_{\mathsf{A}^{(2)}}(x,n)\end{array}\right)\right|,\end{equation}
so that in particular
\begin{equation}\label{bellend}\frac{1}{n}\log |M^{-1}\mathcal{L}_{\mathsf{A}}(x,n)M| \geq \max_{j \in \{1,2\}}\frac{1}{n}\log|\mathcal{L}_{\mathsf{A}^{(j)}}(x,n)|.\end{equation}
Since $M$ is invertible, the functional $A \mapsto |M^{-1}AM|$ defines a norm on $\Mat_d(\mathbb{C})$, and so we may deduce from \eqref{bellend} the inequality
\begin{align*}\lambda = \inf_{n \geq 1}\frac{1}{n}\int \log\|\mathcal{L}_{\mathsf{A}}(x,n)\|\,d\mu(x) &= \lim_{n \to \infty} \frac{1}{n}\int \log\|\mathcal{L}_{\mathsf{A}}(x,n)\|\,d\mu(x)\\
&= \lim_{n \to \infty} \frac{1}{n}\int \log|M^{-1}\mathcal{L}_{\mathsf{A}}(x,n)M|\,d\mu(x)\\
&\geq \liminf_{n \to \infty} \max_{j \in \{1,2\}}\frac{1}{n}\int \log|\mathcal{L}_{\mathsf{A}^{(j)}}(x,n)|\,d\mu(x)\\
&\geq\max_{j \in \{1,2\}}\liminf_{n \to \infty} \frac{1}{n}\int \log\|\mathcal{L}_{\mathsf{A}^{(j)}}(x,n)\|\,d\mu(x)\\
&=\max_{j \in \{1,2\}}\inf_{n \geq 1} \frac{1}{n}\int \log\|\mathcal{L}_{\mathsf{A}^{(j)}}(x,n)\|\,d\mu(x)\\
&=\max\{\lambda_1,\lambda_2\},\end{align*}
where we have used Lemma \ref{Fuckite} to identify the infima with the corresponding limits. Let us now prove the reverse inequality $\lambda \leq \max\{\lambda_1,\lambda_2\}$. As a consequence of \eqref{balls}, the quantity
\[\max\left\{\left|\mathcal{L}_{\mathsf{A}^{(1)}}(x,n)\right|,  \left|\mathcal{L}_{\mathsf{A}^{(2)}}(x,n)\right|, \sum_{k=1}^n \left| \mathcal{L}_{\mathsf{A}^{(1)}}(\sigma^kx,n-k) B_{x_k} \mathcal{L}_{\mathsf{A}^{(2)}}(x,k-1)\right|\right\}\]
is an upper bound for $|M^{-1}\mathcal{L}_{\mathsf{A}}(x,n)M|$ for every $x$ and $n$. Choose $C>1$ such that $|L_1B_iL_2| \leq C \|L_1\|.\|L_2\|$ for all $i \in \mathcal{I}$, $L_1 \in \Mat_r(\mathbb{C})$ and $L_2 \in \Mat_{d-r}(\mathbb{C})$. Applying Lemma \ref{dfbugsdhfui} with $f^j_n(x):=\log \|\mathcal{L}_{\mathsf{A}^{(j)}}(x,n)\|$ we obtain for $\mu$-a.e. $x \in \Sigma_{\mathcal{I}}$,
\begin{eqnarray*}\lefteqn{\liminf_{n \to \infty} \frac{1}{n}\log\left(\sum_{k=1}^n \left| \mathcal{L}_{\mathsf{A}^{(1)}}(\sigma^kx,n-k) B_{x_k} \mathcal{L}_{\mathsf{A}^{(2)}}(x,k-1)\right|\right)}\\
& \leq & \liminf_{n \to \infty} \frac{1}{n}\log \left(Cn. \max_{1 \leq k\leq n} \left\| \mathcal{L}_{\mathsf{A}^{(1)}}(\sigma^kx,n-k)\right\|.\left\| \mathcal{L}_{\mathsf{A}^{(2)}}(x,k-1)\right\|\right) \leq \max\{\lambda_1,\lambda_2\}.\end{eqnarray*}
Applying Theorem \ref{SAET} to each sequence $(f^j_n)$ and making use of the equivalence of norms on finite-dimensional spaces we obtain for $\mu$-a.e. $x$
\[\lim_{n \to \infty} \frac{1}{n} \max_{j\in\{1,2\}}\log\left|\mathcal{L}_{\mathsf{A}^{(j)}}(x,n)\right| = \lim_{n \to \infty}\frac{1}{n} \max_{j \in \{1,2\}} \log \|\mathcal{L}_{\mathsf{A}^{(j)}}(x,n)\|=\max\{\lambda_1,\lambda_2\}.\]
Combining these estimates, it follows that for $\mu$-a.e. $x \in \Sigma_{\mathcal{I}}$
\[\liminf_{n \to \infty} \frac{1}{n} \log\|\mathcal{L}_{\mathsf{A}}(x,n)\|=\liminf_{n \to \infty}\frac{1}{n} \log\left|M^{-1}\mathcal{L}_{\mathsf{A}}(x,n)M\right| \leq \max\{\lambda_1,\lambda_2\}.\]
Applying Theorem \ref{SAET} once more with $f_n(x):=\log\|\mathcal{L}_{\mathsf{A}}(x,n)\|$  we conclude that for $\mu$-a.e. $x$,
\[\lambda = \liminf_{n \to \infty} \frac{1}{n}\log \|\mathcal{L}_{\mathsf{A}}(x,n)\| \leq \max\{\lambda_1,\lambda_2\},\]
and therefore $\lambda=\max\{\lambda_1,\lambda_2\}$ as required. The proof is complete.
\end{proof}
\emph{Remark.} In general the ergodicity hypothesis in Proposition \ref{ergmeasures} may not be removed. We note the following example. Let $\mathcal{I}=\{1,2\}$ and define
\[A_1=\left(\begin{array}{cc}3&0\\0&1\end{array}\right),\qquad A_2=\left(\begin{array}{cc}1&0\\0&3\end{array}\right)\]
and $\mathsf{A}:=\{A_1,A_2\}$. If we define sets $\mathsf{A}^{(1)}$, $\mathsf{A}^{(2)}$ and $\mathsf{B}$ of $1 \times 1$ matrices indexed over $\mathcal{I}$ by $A^{(1)}_1=A^{(2)}_2=3$, $A^{(1)}_2=A^{(2)}_1=1$ and $B_1=B_2=0$, then $\left(\mathsf{A}^{(1)},\mathsf{A}^{(2)},\mathsf{B},I\right)$ defines an upper triangularisation of $\mathsf{A}$. Let $\delta_1$ be the Dirac measure on $\Sigma_2$ concentrated at the constant sequence $y=(y_i)\in\Sigma_2$ whose entries are all equal to $1$, let $\delta_2$ be the Dirac measure concentrated on the constant sequence $z=(z_i)\in\Sigma_2$ whose entries are all equal to $2$, and define $\mu:=\frac{1}{2}(\delta_1+\delta_2)$. An elementary calculation shows that 
\[\inf_{n \geq 1}\frac{1}{n}\int\log \|\mathcal{L}_{\mathsf{A}}(x,n)\|\,d\mu(x) =\log 3,\]
\[ \inf_{n \geq 1}\frac{1}{n}\int\log \|\mathcal{L}_{\mathsf{A}^{(1)}}(x,n)\|\,d\mu(x) = \inf_{n \geq 1}\frac{1}{n}\int\log \|\mathcal{L}_{\mathsf{A}^{(2)}}(x,n)\|\,d\mu(x)=\frac{\log 3}{2}\]
so that the conclusion of Proposition \ref{ergmeasures} is not valid in this case.

\section{Proofs of main theorems}\label{pfs}

We begin with the proofs of Theorem \ref{simple} and Proposition \ref{rabbit}, which are direct consequences of general results on subadditive sequences of functions given in the appendix.

\begin{proof}[of Theorem \ref{simple}]
Since $\|\cdot\|$ and $\vvv\cdot\vvv$ are norms on the finite-dimensional space $\Mat_d(\mathbb{C})$ they are equivalent, and so there exists a constant $C>1$ such that $C^{-1}\|B\| \leq \vvv B \vvv \leq C\|B\|$ for every $B \in \Mat_d(\mathbb{C})$. By \eqref{fart} it follows that
\[\log \varrho(\mathsf{A}) = \lim_{n\to\infty}\sup \left\{\frac{1}{n}\log \left\|\mathcal{L}_{\mathsf{A}}(x,n)\right\| \colon x \in \Sigma_{\mathcal{I}}\right\}=\lim_{n\to\infty}\sup \left\{\frac{1}{n}\log \Lvvv\mathcal{L}_{\mathsf{A}}(x,n)\Rvvv \colon x \in \Sigma_{\mathcal{I}}\right\}.\]
Define a sequence of functions $f_n \colon \Sigma_{\mathcal{I}} \to \mathbb{R} \cup \{-\infty\}$ by $f_n(x):=\log\Lvvv \mathcal{L}_{\mathsf{A}}(x,n)\Rvvv$ for each $n \geq 1$ and $x \in \Sigma_{\mathcal{I}}$. Applying Theorem \ref{StSt} completes the proof of Theorem \ref{simple}.
\end{proof}
\begin{proof}[of Proposition \ref{rabbit}]
Define a sequence of functions $f_n \colon \Sigma_{\mathcal{I}} \to \mathbb{R} \cup \{-\infty\}$ by $f_n(x):=\log\left\| \mathcal{L}_{\mathsf{A}}(x,n)\right\|$ for each $n \geq 1$ and $x \in \Sigma_{\mathcal{I}}$. By definition $\mathcal{M}_{\max}(\mathsf{A})$ is precisely the set of all measures $\mu \in \mathcal{M}_\sigma$ such that $\inf_{n \geq 1}\frac{1}{n}\int f_n\,d\mu = \log \varrho(\mathsf{A})$. By Proposition \ref{mmax} this set is nonempty, compact and convex, and its extreme points are precisely its ergodic elements. If $\vvv \cdot \vvv$ is any norm on $\Mat_d(\mathbb{C})$, choose a constant $C>1$ such that $C^{-1}\|B\| \leq \vvv B \vvv \leq C\|B\|$ for every $B \in \Mat_d(\mathbb{C})$, and define $g_n(x):=\log\Lvvv \mathcal{L}_{\mathsf{A}}(x,n)\Rvvv$ for each $n \geq 1$ and $x \in \Sigma_{\mathcal{I}}$. We have $|f_n(x)-g_n(x)| \leq \log C$ for all $x$ and $n$. If $\mu \in \mathcal{M}_\sigma$ then the sequence $(a_n)$ given by $a_n:=\int f_n\,d\mu$ is subadditive, and so using Lemma \ref{Fuckite} we obtain
\[\inf_{n \geq 1}\frac{1}{n}\int f_n\,d\mu = \lim_{n \to\infty}\frac{1}{n}\int f_n\,d\mu = \lim_{n \to \infty}\frac{1}{n}\int g_n\,d\mu.\]
In particular $\mu \in \mathcal{M}_\sigma$ if and only if $\lim_{n \to \infty}\frac{1}{n}\int \log\Lvvv\mathcal{L}_{\mathsf{A}}(x,n)\Rvvv\,d\mu(x)=\log\varrho(\mathsf{A})$. If additionally $\vvv\cdot\vvv$ is submultiplicative then the sequence $(\int g_n\,d\mu)$ is subadditive, so by Lemma \ref{Fuckite} we have $\lim_{n \to \infty}\frac{1}{n}\int g_n\,d\mu= \inf_{n \geq 1}\frac{1}{n}\int g_n\,d\mu$ in this case. The result follows.
\end{proof}

We now give the much longer proof of Theorem \ref{struc}, starting with the following important special case.

\begin{lemma}\label{rbcase}
Let $\mathsf{A}$ be relatively product bounded. Then parts (i)-(iv) of the conclusions of Theorem \ref{struc} hold for $\mathsf{A}$.\end{lemma}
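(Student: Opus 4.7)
Fix an extremal norm $\vvv\cdot\vvv$ on $\mathbb{C}^d$ for $\mathsf{A}$, which exists by the product-bounded hypothesis. I will dispose of (iv) first because (i) and (ii) will draw on it, and leave (iii) for last, where the real difficulty sits.

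For (iv), the pointwise inequality $\vvv\mathcal{L}_{\mathsf{A}}(y,n)\vvv\leq\varrho(\mathsf{A})^n$ is immediate from submultiplicativity and the extremal property of $\vvv\cdot\vvv$. Given $\mu\in\mathcal{M}_{\max}(\mathsf{A})$, Proposition \ref{rabbit} applied to the submultiplicative norm $\vvv\cdot\vvv$ gives $\inf_n\tfrac{1}{n}\int\log\vvv\mathcal{L}_{\mathsf{A}}(\cdot,n)\vvv\,d\mu=\log\varrho(\mathsf{A})$, and combining this with the pointwise upper bound forces $\tfrac{1}{n}\int\log\vvv\mathcal{L}_{\mathsf{A}}(\cdot,n)\vvv\,d\mu=\log\varrho(\mathsf{A})$ for every $n$. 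Since the integrand is continuous and dominated by the constant $n\log\varrho(\mathsf{A})$, the definition of $\supp\mu$ forces the pointwise equality $\vvv\mathcal{L}_{\mathsf{A}}(y,n)\vvv=\varrho(\mathsf{A})^n$ throughout $\supp\mu$ for every $n$. Taking unions over $\mu\in\mathcal{M}_{\max}(\mathsf{A})$ establishes (iv), and strong extremality with respect to the original norm $\|\cdot\|$ follows by equivalence of norms on $\Mat_d(\mathbb{C})$.

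For (i), I will use separability of $\mathcal{M}$ (compact and metrisable) to extract a countable dense subset $\{\mu_k\}_{k\geq 1}\subseteq\mathcal{M}_{\max}(\mathsf{A})$ and set $\hat\mu:=\sum_{k\geq 1}2^{-k}\mu_k$. Since $\mathcal{M}_{\max}(\mathsf{A})$ is convex, closed and nonempty by Proposition \ref{rabbit}, we obtain $\hat\mu\in\mathcal{M}_{\max}(\mathsf{A})\subseteq\mathcal{M}_\sigma$, whence $\supp\hat\mu\subseteq Z_{\mathsf{A}}$. For the reverse inclusion, note that $\supp\hat\mu=\overline{\bigcup_k\supp\mu_k}$; weak-* lower semicontinuity of $\nu\mapsto\nu(U)$ on open sets $U$ together with the density of $\{\mu_k\}$ shows that $\supp\mu\subseteq\supp\hat\mu$ for every $\mu\in\mathcal{M}_{\max}(\mathsf{A})$, and taking unions over such $\mu$ yields $Z_{\mathsf{A}}\subseteq\supp\hat\mu$. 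Consequently $Z_{\mathsf{A}}=\supp\hat\mu$ is closed, nonempty, and $\sigma$-invariant (the last because $\hat\mu\in\mathcal{M}_\sigma$).

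Part (ii) is then a direct consequence: the forward direction holds by the definition of $Z_{\mathsf{A}}$ (so $\supp\mu\subseteq Z_{\mathsf{A}}$ already), while the converse follows by integrating the pointwise identity from (iv) against $\mu$ and reapplying Proposition \ref{rabbit}. For (iii), the upper estimate $\limsup_{n\to\infty}\varrho(\mathsf{A})^{-n}\rho(\mathcal{L}_{\mathsf{A}}(x,n))\leq 1$ is immediate from the spectral-radius bound $\rho\leq\vvv\cdot\vvv$ combined with (iv). The genuine obstacle is the matching lower bound along recurrent orbits. Given $x\in Z_{\mathsf{A}}$ recurrent and a sequence $n_k\to\infty$ with $\sigma^{n_k}x\to x$, writing $L_k:=\mathcal{L}_{\mathsf{A}}(x,n_k)$ we have the norm saturation $\vvv L_k\vvv=\varrho(\mathsf{A})^{n_k}$ from (iv); the idea is to combine this with the near-periodicity of $x$ via the cocycle identity $\mathcal{L}_{\mathsf{A}}(x,mn_k+r)=\mathcal{L}_{\mathsf{A}}(\sigma^{mn_k}x,r)\cdots\mathcal{L}_{\mathsf{A}}(\sigma^{n_k}x,n_k)L_k$ to control $\vvv L_k^m\vvv$, whence Gelfand's formula $\rho(L_k)^{1/n_k}=\lim_m\vvv L_k^m\vvv^{1/(mn_k)}$ yields the lower estimate. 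The conceptual hurdle is that $\rho$ is not a subadditive observable, so the subadditive ergodic theorem cannot be applied to $\log\rho(\mathcal{L}_{\mathsf{A}}(\cdot,n))$ directly; I expect that the extension of Peres's lemma to subadditive cocycles announced in the appendix is the device that bridges this gap, translating the norm-saturation identity on $Z_{\mathsf{A}}$ into the required asymptotic spectral statement along return times.
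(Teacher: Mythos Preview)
Your treatment of (i), (ii), and (iv) is correct and essentially coincides with the paper's argument. The paper packages the core step for (ii) and (iv) through Lemma~\ref{subordprin} in the appendix, which produces directly the set $Y_{\vvv\cdot\vvv}=\{x:\vvv\mathcal{L}_{\mathsf{A}}(x,n)\vvv=\varrho(\mathsf{A})^n\text{ for all }n\}$ together with the characterisation $\mu\in\mathcal{M}_{\max}(\mathsf{A})\Leftrightarrow\mu(Y_{\vvv\cdot\vvv})=1$; but your direct route (integrate the pointwise inequality against a maximising measure, then use continuity and the definition of support) reproduces exactly this. The argument for (i) is identical to the paper's.

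Part (iii), however, has a genuine gap. Your proposed route of controlling $\vvv L_k^m\vvv$ via the cocycle identity and then invoking Gelfand's formula does not work as stated: Gelfand's formula requires $m\to\infty$ with $k$ \emph{fixed}, but the cocycle decomposition of $\mathcal{L}_{\mathsf{A}}(x,mn_k)$ yields factors $\mathcal{L}_{\mathsf{A}}(\sigma^{jn_k}x,n_k)$ for $0\le j\le m-1$, and recurrence of $x$ gives no control over $\sigma^{jn_k}x$ as $j$ grows with $k$ fixed. So you cannot compare $L_k^m$ with $\mathcal{L}_{\mathsf{A}}(x,mn_k)$ in any useful way. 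Your guess that the subadditive Peres lemma (Proposition~\ref{heavy}) is the bridging device is also off target: that result locates a single point at which all the time-averages $\tfrac{1}{n}f_n$ dominate $\tilde\beta$, and says nothing about spectral radii.

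The paper's mechanism for (iii) is different. One forms the set $\mathcal{S}_{\mathsf{A}}(x)$ of all accumulation points of $\varrho(\mathsf{A})^{-n}\mathcal{L}_{\mathsf{A}}(x,n)$ along sequences $n\to\infty$ with $\sigma^nx\to x$. By (iv) every element has $\vvv\cdot\vvv$-norm $1$, so $\mathcal{S}_{\mathsf{A}}(x)$ is a compact subset of the unit sphere; a short calculation with the cocycle identity and continuity of $\mathcal{L}_{\mathsf{A}}$ shows it is closed under matrix multiplication. A compact semigroup always contains an idempotent $P$; since $\vvv P\vvv=1$ this projection is nonzero, hence $\rho(P)=1$. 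Continuity of the spectral radius functional then gives $\varrho(\mathsf{A})^{-n_j}\rho(\mathcal{L}_{\mathsf{A}}(x,n_j))\to 1$ along the sequence witnessing $P$. This compact-semigroup/idempotent argument is the missing idea.
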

\begin{proof}
Let $\mathcal{I}$ be an index set for $\mathsf{A}$, and let us denote by $Z$ the set $Z_{\mathsf{A}} \subseteq \Sigma_{\mathcal{I}}$ defined in the statement of Theorem \ref{struc}. We begin by proving (i), for which we recycle a simple argument from \cite[Proposition 1]{M1}.

By Proposition \ref{rabbit}, $\mathcal{M}_{\max}(\mathsf{A})$ is a compact subset of a metrisable topological space, and therefore it is separable. Let $(\mu_n)_{n=1}^\infty$ be a sequence of measures which is dense in $\mathcal{M}_{\max}(\mathsf{A})$, and define $\hat\mu:=\sum_{n=1}^\infty 2^{-n}\mu_n$. Since $\mathcal{M}_{\max}(\mathsf{A})$ is compact and convex we have $\hat\mu \in \mathcal{M}_{\max}(\mathsf{A})$, and therefore $\supp \hat\mu \subseteq Z$. Now, if $U \subset \Sigma_{\mathcal{I}}$ is open with $\nu(U)>0$ for some $\nu \in \mathcal{M}_{\max}(\mathsf{A})$, then the set $\mathcal{U}:=\{\mu \in \mathcal{M}_{\max}(\mathsf{A}) \colon \mu(U)>0\}$ is a nonempty open subset of $\mathcal{M}_{\max}(\mathsf{A})$. Consequently there exists $k \geq 1$ such that $\mu_k \in \mathcal{U}$, and therefore $\hat\mu(U)\geq 2^{-k}\mu_k(U)>0$. We deduce that the open set $\Sigma_{\mathcal{I}} \setminus \supp \hat\mu$ cannot have positive measure with respect to any $\nu \in \mathcal{M}_{\max}(\mathsf{A})$, and it follows that   $Z \subseteq \supp \hat\mu$. We conclude that $Z=\supp\hat\mu$ and part (i) of Theorem \ref{struc} holds for $\mathsf{A}$.
 
We next prove both (ii) and (iv). Since $\mathsf{A}$ is assumed to be relatively product bounded, it has at least one extremal norm. Let $\vvv\cdot\vvv$ be any extremal norm for $\mathsf{A}$, and for each $n \geq 1$ and $x \in \Sigma_{\mathcal{I}}$ define $f_n(x):=\log\vvv\mathcal{L}_{\mathsf{A}}(x,n)\vvv$. Clearly $\sup_{x\in\Sigma_{\mathcal{I}}} f_n(x)=n\log\varrho(\mathsf{A})$ for every $n \geq 1$. It follows by Lemma \ref{subordprin} that the set $Y_{\vvv\cdot\vvv}:=\{x \in \Sigma_{\mathcal{I}} \colon \vvv\mathcal{L}_{\mathsf{A}}(x,n)\vvv=\varrho(\mathsf{A})^n\text{ }\forall\,n \geq 1\}$ is compact and nonempty, satisfies $\sigma Y_{\vvv\cdot\vvv} \subseteq Y_{\vvv\cdot\vvv}$, and has the property that for each measure $\mu \in \mathcal{M}_\sigma$ we have $\mu \in \mathcal{M}_{\max}(\mathsf{A})$ if and only if $\mu\left(Y_{\vvv\cdot\vvv}\right)=1$. Since by part (i) $Z$ is equal to the support of $\hat \mu \in \mathcal{M}_{\max}(\mathsf{A})$, we in particular have $Z \subseteq Y_{\vvv\cdot\vvv}$. It follows that if $\mu \in \mathcal{M}_\sigma$ and $\mu(Z)=1$, then $\mu\left(Y_{\vvv\cdot\vvv}\right)=\mu(Z)=1$ and therefore $\mu \in \mathcal{M}_{\max}(\mathsf{A})$, which establishes (ii). Since $Z\subseteq Y_{\vvv\cdot\vvv}$, and $\vvv\cdot\vvv$ is an arbitrary extremal norm, we immediately deduce (iv).

It remains to prove (iii), for which we modify an argument from \cite{Madv}. Let $\vvv\cdot\vvv$ be an extremal norm for $\mathsf{A}$, and fix any recurrent $x \in Z$. Define
\[\mathcal{S}_{\mathsf{A}}(x):=\bigcap_{m =1}^\infty \left(\overline{\bigcup_{n = m}^\infty \left\{\varrho(\mathsf{A})^{-n}\mathcal{L}_{\mathsf{A}}(x,n) \colon d(\sigma^nx,x)<\frac{1}{m}\right\}}\right).\]
That is, $\mathcal{S}_{\mathsf{A}}(x)$ is the set of all $B \in \mathbf{M}_{d}(\mathbb{C})$ which are equal to the limit as $j \to \infty$ of $\varrho(\mathsf{A})^{-n_j}\mathcal{L}_{\mathsf{A}}(x,n_j)$ along some strictly increasing sequence $(n_j)_{j=1}^\infty$ having the property that $\lim_{j \to \infty}\sigma^{n_j}x=x$. From part (iv) of the Theorem it is clear that $\vvv B\vvv = 1$ for every $B \in \mathcal{S}_{\mathsf{A}}(x)$, and since $\mathcal{S}_{\mathsf{A}}(x)$ is closed it is compact. Since $x$ is recurrent, $\mathcal{S}_{\mathsf{A}}(x)$ is nonempty as a consequence of the compactness of the unit sphere of $\Mat_d(\mathbb{C})$ with respect to $\vvv\cdot\vvv$.  A simple calculation as in \cite{Madv} shows that $\mathcal{S}_{\mathsf{A}}(x)$ is a semigroup with respect to composition of its elements, and since $\mathcal{S}_{\mathsf{A}}(x)$ is compact it follows that it contains an idempotent element $P$ (see e.g. \cite{HM}). The idempotent matrix $P$ is a projection with nonzero norm and therefore satisfies $\rho(P)=1$. It follows from the definition of $\mathcal{S}_{\mathsf{A}}(x)$ that there exists a strictly increasing sequence of integers $(n_j)_{j=1}^\infty$ such that $\varrho(\mathsf{A})^{-n_j}\vvv\mathcal{L}_{\mathsf{A}}(x,n_j)\vvv \to P$, and since the spectral radius functional $\rho \colon \Mat_d(\mathbb{C}) \to \mathbb{R}$ is continuous we have $\lim_{j \to \infty}\varrho(\mathsf{A})^{-n_j}\rho(\mathcal{L}_{\mathsf{A}}(x,n_j)) = 1$. Since clearly $\rho(\mathcal{L}_{\mathsf{A}}(x,n))\leq \vvv\mathcal{L}_{\mathsf{A}}(x,n)\vvv = \varrho(\mathsf{A})^n$ for all $n \geq 1$ this proves (iii).
\end{proof}

We may now prove the Theorem in the general case. We shall prove parts (i) to (iv) of Theorem \ref{struc} by induction on the dimension $d$. If $d=1$ then it is trivially true that $\mathsf{A}$ is relatively product bounded, so the Theorem holds in the case $d=1$ by Lemma \ref{rbcase}.

Let us now assume that conclusions (i) to (iv) of Theorem \ref{struc} have been established for all compact sets $\mathsf{A}\subset \Mat_d(\mathbb{C})$ when $d$ lies in the range $1 \leq d \leq D$, for some natural number $D$. We shall show that these conclusions also hold for all compact sets $\mathsf{A} \subset \Mat_{D+1}(\mathbb{C})$. Let us therefore consider a fixed compact set $\mathsf{A} \subset \mathbf{M}_{D+1}(\mathbb{C})$ indexed over $\mathcal{I}$. If $\mathsf{A}$ is relatively product bounded then the result holds by Lemma \ref{rbcase}, so for the remainder of the proof the induction step we shall assume that $\mathsf{A}$ is not relatively product bounded.

By Proposition \ref{ERXX} there exists an upper triangularisation $\left(\mathsf{A}^{(1)},\mathsf{A}^{(2)},\mathsf{B},M\right)$ of $\mathsf{A}$ such that $\varrho(\mathsf{A})=\varrho\left(\mathsf{A}^{(1)}\right)=\varrho\left(\mathsf{A}^{(2)}\right)$. Let us write $Z_i:=Z_{\mathsf{A}^{(i)}} \subseteq \Sigma_{\mathcal{I}}$ for $i=1,2$ and $Z:=Z_{\mathsf{A}}\subseteq \Sigma_{\mathcal{I}}$, and let $d_1,d_2$ be respectively the dimensions of the matrices comprising $\mathsf{A}^{(1)}$ and $\mathsf{A}^{(2)}$. Since each $d_i$ is less than or equal to $D$, it follows from the induction hypothesis that each $\mathsf{A}^{(i)}$ satisfies properties (i) to (iv) of Theorem \ref{struc}.

The core of the proof of the induction step is to show that $Z = Z_1 \cup Z_2$. 
We begin by showing that if $\mu \in \mathcal{M}_{\max}(\mathsf{A})$, then necessarily $\mu(Z_1 \cup Z_2)=1$. Let $\mathcal{B}$ be the Borel $\sigma$-algebra of the compact metrisable topological space $\mathcal{M}_\sigma$, and choose any $\mu \in \mathcal{M}_{\max}(\mathsf{A})$. By Proposition \ref{simple} $\mathcal{M}_{\max}(\mathsf{A})$ is a compact, convex set whose extremal points are precisely its ergodic elements. By applying a suitable version of Choquet's theorem \cite[p.14]{Phelps} it follows that there exists a measure $\mathbb{P}$ on the measurable space $(\mathcal{M}_{\max}(\mathsf{A}),\mathcal{B})$ such that $\int g\,d\mu=\iint g\,dm \,d\mathbb{P}(m)$ for every continuous function $g \colon \Sigma_{\mathcal{I}} \to \mathbb{R}$, and
\[\mathbb{P}\left(\left\{\nu \in \mathcal{E}_\sigma \colon \inf_{k \geq 1}\frac{1}{k}\int \log\|\mathcal{L}_{\mathsf{A}}(x,k)\|\,d\nu(x) = \log \varrho(\mathsf{A})\right\}\right)=1.\]
By Proposition \ref{ergmeasures} it follows that for $\mathbb{P}$-almost-every $\nu \in \mathcal{M}_\sigma$,
\[\log \varrho(\mathsf{A})=\inf_{k \geq 1}\frac{1}{k}\int \log \left\|\mathcal{L}_{\mathsf{A}}(x,k)\right\|\,d\nu(x) = \max_{i \in \{1,2\}} \inf_{k \geq 1}\frac{1}{k}\int \log \left\|\mathcal{L}_{\mathsf{A}^{(i)}}(x,k)\right\|\,d\nu(x).\]
In particular, since $\varrho(\mathsf{A})=\varrho\left(\mathsf{A}^{(1)}\right)=\varrho\left(\mathsf{A}^{(2)}\right)$ we conclude that $\mathbb{P}$-almost-every $\nu \in \mathcal{M}_\sigma$ belongs to  $\mathcal{M}_{\max}\left(\mathsf{A}^{(1)}\right) \cup \mathcal{M}_{\max}\left(\mathsf{A}^{(2)}\right)$. It follows by the definition of $Z_1$ and $Z_2$ that $\nu(Z_1 \cup Z_2)=1$ for $\mathbb{P}$-a.e. $\nu \in \mathcal{M}_\sigma$. Since $Z_1 \cup Z_2$ is closed and $\Sigma_{\mathcal{I}}$ is metrisable, we may choose a non-negative continuous function $g \colon \Sigma_{\mathcal{I}} \to \mathbb{R}$ which is zero precisely on $Z_1 \cup Z_2$. We then have $\int g\,d\nu=0$ for $\mathbb{P}$-almost-every $\nu \in \mathcal{M}_{\sigma}$, and it follows by integration that $\int g\,d\mu = \iint g\,d\nu\, d\mathbb{P}(\nu)=0$.  We conclude that $\mu(Z_1 \cup Z_2)=1$ as claimed. Since $\mu$ is an arbitrary element of $\mathcal{M}_{\max}(\mathsf{A})$, we furthermore obtain the result that $Z \subseteq Z_1 \cup Z_2$.

We now conversely claim that if $\mu \in \mathcal{M}_\sigma$ and $\mu(Z_1 \cup Z_2)=1$, then $\mu \in\mathcal{M}_{\max}(\mathsf{A})$. Let us fix such a measure $\mu$. It is a classical result that the set $\mathcal{M}_\sigma$ is compact and convex, with its set of extremal points being precisely equal to $\mathcal{E}_\sigma$ (see e.g. \cite[p.152]{W}). Applying Choquet's theorem we find that there exists a probability measure $\mathbb{P}$ on the measurable space $(\mathcal{M}_\sigma,\mathcal{B})$ such that $\int g\,d\mu=\iint g\,dm\,d\mathbb{P}(m)$ for every continuous $g \colon \Sigma_{\mathcal{I}} \to \mathbb{R}$, and $\mathbb{P}(\mathcal{E}_\sigma)=1$. Considering again a non-negative continuous function $g$ which is zero precisely on $Z_1 \cup Z_2$, we note that since $\int g\,d\mu=0$ we must have $\int g\,d\nu=0$ for $\mathbb{P}$-a.e. $\nu \in\mathcal{M}_\sigma$, and therefore $\nu(Z_1\cup Z_2)=1$ for $\mathbb{P}$-almost-every $\nu$. Since the sets $Z_1$ and $Z_2$ are invariant, an ergodic measure which gives full measure to $Z_1\cup Z_2$ must also give full measure to at least one of $Z_1$ and $Z_2$. By the induction hypothesis part (ii) of Theorem 2.3 applies to each $\mathsf{A}^{(i)}$, and we deduce that $\mathbb{P}$-almost-every measure $\nu \in\mathcal{M}_\sigma$ belongs to at least one of $\mathcal{M}_{\max}\left(\mathsf{A}^{(1)}\right)$ and $ \mathcal{M}_{\max}\left(\mathsf{A}^{(2)}\right)$. Applying Proposition \ref{ergmeasures} once more we deduce that $\mathbb{P}(\mathcal{M}_{\max}(\mathsf{A}))=1$. For each fixed pair of integers $n,k \geq 1$, the function $x \mapsto \max\{\log\|\mathcal{L}_{\mathsf{A}}(x,n)\|,-k\}$ is a continuous function from $\Sigma_{\mathcal{I}}$ to $\mathbb{R}$, and therefore
\begin{align*}\frac{1}{n}\int\max\{\log\|\mathcal{L}_{\mathsf{A}}(x,n)\|,-k\}\,d\mu(x) &= \frac{1}{n}\iint\max\{\log\|\mathcal{L}_{\mathsf{A}}(x,n)\|,-k\}\,d\nu(x)d\mathbb{P}(\nu) \\
&\geq \frac{1}{n}\iint\log\|\mathcal{L}_{\mathsf{A}}(x,n)\|\,d\nu(x)d\mathbb{P}(\nu) \geq\log\varrho(\mathsf{A}).\end{align*}
Letting $k \to \infty$ for each fixed $n$, applying the monotone convergence theorem and then taking the infimum over $n$ we obtain $\inf_{n\geq 1} \frac{1}{n}\int \log\|\mathcal{L}_{\mathsf{A}}(x,n)\|\,d\mu(x) \geq \log\varrho(\mathsf{A})$, and we conclude that $\mu \in \mathcal{M}_{\max}(\mathsf{A})$ as claimed.

We can now show that $Z=Z_1 \cup Z_2$. By the induction hypothesis, property (i) of the Theorem holds for $\mathsf{A}^{(1)}$ and $\mathsf{A}^{(2)}$,  and hence for $i=1,2$ there exists $\hat\mu_i \in \mathcal{M}_{\max}\left(\mathsf{A}^{(i)}\right)$ such that $\supp \hat\mu_i=Z_i$. The $\sigma$-invariant measure $\hat\mu := \frac{1}{2}\left(\hat\mu_1+\hat\mu_2\right)$ has support equal to $Z_1\cup Z_2$, and as previously noted this implies that $\hat\mu \in\mathcal{M}_{\max}(\mathsf{A})$. We deduce that $Z_1 \cup Z_2 \subseteq Z$ and therefore $Z=Z_1 \cup Z_2$ as claimed. Combining this observation with the results already established, it follows immediately that parts (i) and (ii) of Theorem \ref{struc} hold for $\mathsf{A}$. 

To complete the proof of the induction step it remains to show that parts (iii) and (iv) of the Theorem hold for $\mathsf{A}$. We begin with part (iii). It is a straightforward consequence of the relation
\[M^{-1}\mathcal{L}_{\mathsf{A}}(x,n)M = \left(\begin{array}{cc}\mathcal{L}_{\mathsf{A}^{(1)}}(x,n)&*\\0&\mathcal{L}_{\mathsf{A}^{(2)}}(x,n)\end{array}\right)\]
together with Gelfand's formula that $\rho(\mathcal{L}_{\mathsf{A}}(x,n))=\max_{i=1,2}\rho\left(\mathcal{L}_{\mathsf{A}^{(i)}}(x,n)\right)$ for all $x \in \Sigma_{\mathcal{I}}$ and $n \geq 1$. By the induction hypothesis we know that for each $i$, every $x \in Z_i$ satisfies $\limsup_{n \to \infty}\varrho\left(\mathsf{A}^{(i)}\right)^{-n}\rho(\mathcal{L}_{\mathsf{A}^{(i)}}(x,n))=1$, and since $Z=Z_1\cup Z_2$ it follows directly that $\limsup_{n \to \infty}\varrho(\mathsf{A})^{-n}\rho(\mathcal{L}_{\mathsf{A}}(x,n))=1$ for every $x \in Z$. Similarly, by the induction hypothesis every $x \in Z_1\cup Z_2=Z$ satisfies $\inf_{n \geq 1} \max_{i=1,2}\varrho(\mathsf{A})^{-n}|\mathcal{L}_{\mathsf{A}^{(i)}}(x,n)|>0$, and it follows that every $x \in Z$ satsifies $\inf_{n \geq 1}\varrho(\mathsf{A})^{-n}\|\mathcal{L}_{\mathsf{A}}(x,n)\|>0$ as required to prove (iv). We have shown that if parts (i)-(iv) of Theorem \ref{struc} hold for all compact sets $\mathsf{A}$ of matrices of dimension at most $D$, then they also hold for all compact sets of matrices of dimension at most $D+1$. It follows by induction that properties (i)-(iv) of Theorem \ref{struc} hold for every $\mathsf{A}$ within the scope of the statement of the Theorem.

To complete the proof of the Theorem it remains only to establish (v). Let us fix $\mathsf{A} \subset \Mat_d(\mathbb{C})$ indexed in $\mathcal{I}$, and let $x\in \Sigma_{\mathcal{I}}$ be weakly extremal. For each $n \geq 1$ define $\mu_n:=(1/n)\sum_{k=0}^{n-1}\delta_{\sigma^kx} \in \mathcal{M}$. Applying Lemma \ref{wkx} with $f_n(x):=\|\mathcal{L}_{\mathsf{A}}(x,n)\|$ we see that every accumulation point of $(\mu_n)$ lies in $\mathcal{M}_{\max}(\mathsf{A})$. Define a continuous function $g \colon \Sigma_{\mathcal{I}} \to \mathbb{R}$ by $g(x):=\mathrm{dist}(x,Z_{\mathsf{A}})$. If $(\mu_{n_j})$ is a convergent subsequence of $(\mu_n)$ with limit $\mu \in \mathcal{M}_{\max}(\mathsf{A})$ then clearly $\int g\,d\mu=0$ using the definition of $Z_{\mathsf{A}}$. It follows that $(1/n_j)\sum_{k=0}^{n_j-1}\mathrm{dist}(\sigma^kx,Z_{\mathsf{A}}) \to 0$ along any subsequence $(n_j)$ such that the sequence of measures $(\mu_{n_j})$ is  convergent in $\mathcal{M}$. Since $\mathcal{M}$ is compact and metrisable, it follows that every subsequence $(n_j)$ of the natural numbers possesses a finer subsequence with this property, and we conclude that $(1/n)\sum_{k=0}^{n-1}\mathrm{dist}(\sigma^kx,Z_{\mathsf{A}}) \to 0$ as required. The proof of the Theorem is complete.

\section{Applications to stability theory for discrete linear inclusions}\label{appmarkov}

Given a finite set $\mathsf{A}=\{A_1,\ldots,A_\ell\}\subset \Mat_d(\mathbb{C})$, the discrete linear inclusion associated to $\mathsf{A}$ is the set of all possible sequences of vectors $(v_n)_{n \geq 0}$ belonging to $\mathbb{C}^d$ having the form $v_n \equiv \mathcal{L}_{\mathsf{A}}(x,n)v_0$ for some $x \in \Sigma_\ell$. In the past two decades discrete linear inclusions have attracted substantial interest in the control theory literature \cite{Ba,BCS,DHM,FMC,Gurvits,SWMWK,SG,Wirth-1}.

In the influential article \cite{Gurvits}, L. Gurvits introduced the following notions of stability for discrete linear inclusions. The discrete linear inclusion corresponding to $\mathsf{A}$ is said to be \emph{absolutely asymptotically stable} if $v_n \to 0$ for every trajectory of the inclusion, or equivalently if $\mathcal{L}_{\mathsf{A}}(x,n) \to 0$ for every $x \in \Sigma_\ell$. We say that the inclusion associated to $\mathsf{A}$ is \emph{periodically asymptotically stable} if $\mathcal{L}_{\mathsf{A}}(x,n)v_0 \to 0$ for all $v_0 \in \mathbb{C}^d$ whenever $x \in \Sigma_\ell$ is a periodic sequence, or equivalently if $\mathcal{L}_{\mathsf{A}}(x,n) \to 0$ for every periodic $x \in\Sigma_\ell$. Finally, the inclusion associated to $\mathsf{A}$ is \emph{Markov asymptotically stable} if for every Markov measure $\mu \in \mathcal{M}$ for which all transition probabilities are nonzero, we have $\mathcal{L}_{\mathsf{A}}(x,n)v_0 \to 0$ $\mu$-a.e. for all $v_0 \in \mathbb{C}^d$, or equivalently $\mathcal{L}_{\mathsf{A}}(x,n)\to 0$ $\mu$-a.e. In the remainder of this section we shall simply say that $\mu$ is a \emph{full Markov measure} if it is a Markov measure on $\Sigma_\ell$ such that all transition probabilities in the associated transition matrix are nonzero. Note that every full Markov measure belongs to $\mathcal{E}_\sigma$.

It is not difficult to see that the linear inclusion associated to $\mathsf{A}$ is absolutely asymptotically stable if and only if $\varrho(\mathsf{A})<1$, and moreover that absolute asymptotic stability implies both periodic and Markov asymptotic stability. L. Gurvits asked in \cite{Gurvits} whether periodic asymptotic stability implies absolute asymptotic stability; the existence of counterexamples to the Lagarias-Wang finiteness conjecture (see section \ref{appratio} below) answered this question in the negative.  On the other hand it is straightforward to exhibit sets of matrices whose associated discrete linear inclusions are Markov asymptotically stable but not absolutely asymptotically stable, for example the set
\[\mathsf{A}:=\left\{\left(\begin{array}{cc}0&\lambda\\0&0\end{array}\right),\left(\begin{array}{cc}0&0\\\lambda&0\end{array}\right)\right\}\]
has this property for any $\lambda\geq 1$. In this section we give a proof of the following result, which was recently announced by X. Dai, Y. Huang and M. Xiao in conference proceedings \cite{DHM2}:
\begin{theorem}\label{mkv}
Let $\mathsf{A}=\{A_1,\ldots,A_\ell\} \subset \Mat_d(\mathbb{C})$, and suppose that the associated discrete linear inclusion is periodically asymptotically stable. Then it is also Markov asymptotically stable.
\end{theorem}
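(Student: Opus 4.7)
My approach is a dichotomy on $\varrho(\mathsf{A})$. First I would observe that periodic asymptotic stability forces $\varrho(\mathsf{A})\leq 1$: every finite word $(i_1,\ldots,i_n)\in\{1,\ldots,\ell\}^n$ determines a periodic sequence $x\in\Sigma_\ell$ for which $\mathcal{L}_{\mathsf{A}}(x,n) = A_{i_n}\cdots A_{i_1}$ is precisely the one-period product, and the hypothesis $\mathcal{L}_{\mathsf{A}}(x,kn)=\mathcal{L}_{\mathsf{A}}(x,n)^k\to 0$ then forces $\rho(A_{i_n}\cdots A_{i_1})<1$. Combining this with the Berger-Wang formula
\[\varrho(\mathsf{A})=\limsup_{n\to\infty}\sup_{(i_1,\ldots,i_n)}\rho(A_{i_n}\cdots A_{i_1})^{1/n}\]
immediately yields $\varrho(\mathsf{A})\leq 1$, since $\varrho(\mathsf{A})>1$ would exhibit a long finite product of spectral radius exceeding one. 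When $\varrho(\mathsf{A})<1$ the inclusion is absolutely asymptotically stable and the conclusion is automatic, so the only nontrivial case is $\varrho(\mathsf{A})=1$.

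Now fix a full Markov measure $\mu$. Positivity of all transition probabilities makes $\mu$ both ergodic and fully supported, so $\supp\mu=\Sigma_\ell$. By the subadditive ergodic theorem the $\mu$-almost-everywhere limit $\lambda(\mu):=\lim_n n^{-1}\log\|\mathcal{L}_{\mathsf{A}}(x,n)\|$ exists and is $\mu$-a.e.\ constant, and Theorem \ref{simple} gives $\lambda(\mu)\leq\log\varrho(\mathsf{A})=0$. If $\lambda(\mu)<0$ then $\mathcal{L}_{\mathsf{A}}(x,n)\to 0$ exponentially $\mu$-almost everywhere and Markov asymptotic stability for $\mu$ is verified. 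The core of the argument is therefore to rule out the remaining possibility $\lambda(\mu)=0$, equivalently $\mu\in\mathcal{M}_{\max}(\mathsf{A})$.

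Suppose for contradiction $\mu\in\mathcal{M}_{\max}(\mathsf{A})$. Theorem \ref{struc}(ii) gives $\mu(Z_{\mathsf{A}})=1$, which combined with $\supp\mu=\Sigma_\ell$ and the closedness of $Z_{\mathsf{A}}$ forces $Z_{\mathsf{A}}=\Sigma_\ell$. Pick any periodic $x\in\Sigma_\ell$ of period $p$: such a point is trivially $\sigma$-recurrent and now lies in $Z_{\mathsf{A}}$, so Theorem \ref{struc}(iii) yields $\limsup_{n\to\infty}\rho(\mathcal{L}_{\mathsf{A}}(x,n))=1$. On the other hand, periodic asymptotic stability gives $\rho(B)<1$ for $B:=\mathcal{L}_{\mathsf{A}}(x,p)$, hence $B^k\to 0$; writing $n=kp+r$ with $0\leq r<p$, the cocycle identity $\mathcal{L}_{\mathsf{A}}(x,n)=\mathcal{L}_{\mathsf{A}}(x,r)B^k$ (using $\sigma^{kp}x=x$) together with the finiteness of the family $\{\mathcal{L}_{\mathsf{A}}(x,r):0\leq r<p\}$ yields $\|\mathcal{L}_{\mathsf{A}}(x,n)\|\to 0$ and hence $\rho(\mathcal{L}_{\mathsf{A}}(x,n))\to 0$, contradicting the limsup. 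The main obstacle of the whole argument is this final step: Theorem \ref{struc}(iii) is exactly what converts full-support maximising information for $\mu$ into rigid spectral information at every periodic point, and it is this mechanism that makes periodic and Markov asymptotic stability incompatible in the critical case $\varrho(\mathsf{A})=1$.
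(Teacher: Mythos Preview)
Your proof is correct and follows essentially the same route as the paper: the dichotomy on $\varrho(\mathsf{A})$, the observation that a full Markov measure in $\mathcal{M}_{\max}(\mathsf{A})$ forces $Z_{\mathsf{A}}=\Sigma_\ell$, and the contradiction with periodic asymptotic stability are all exactly the paper's argument (the paper packages the middle step as a separate lemma). The only cosmetic difference is that for the final contradiction you invoke Theorem~\ref{struc}(iii) at a periodic point to obtain $\limsup_n\rho(\mathcal{L}_{\mathsf{A}}(x,n))=1$, whereas the paper invokes Theorem~\ref{struc}(iv) to obtain strong extremality $\|\mathcal{L}_{\mathsf{A}}(x,n)\|\geq\varepsilon$; either clause yields the desired incompatibility with $\mathcal{L}_{\mathsf{A}}(x,n)\to 0$.
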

We shall deduce Theorem \ref{mkv} from the following simple lemma:
\begin{lemma}\label{pasmas}
Let $\mathsf{A}=\{A_1,\ldots,A_\ell\} \subset \Mat_d(\mathbb{C})$ be a finite set of matrices such that the associated discrete linear inclusion is not Markov asymptotically stable. Then either $\varrho(\mathsf{A})>1$, or $\varrho(\mathsf{A})=1$ and $Z_{\mathsf{A}}=\Sigma_\ell$.
\end{lemma}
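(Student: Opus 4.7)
The plan is to combine the subadditive ergodic theorem with the characterisation of the Mather set provided by Theorem~\ref{struc}. First I would unpack the hypothesis: since the inclusion associated to $\mathsf{A}$ is not Markov asymptotically stable, there exists a full Markov measure $\mu$ on $\Sigma_\ell$ together with a Borel set $E\subset\Sigma_\ell$ with $\mu(E)>0$ on which $\mathcal{L}_{\mathsf{A}}(x,n)\not\to 0$. As noted in the text preceding the lemma, every full Markov measure is ergodic, and since the associated transition matrix has all entries strictly positive, every finite cylinder of $\Sigma_\ell$ carries positive $\mu$-mass; in particular $\supp\mu=\Sigma_\ell$.

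Next I would apply the subadditive ergodic theorem (Theorem~\ref{SAET}) to the subadditive cocycle $f_n(x):=\log\|\mathcal{L}_{\mathsf{A}}(x,n)\|$ to produce a constant $\lambda$ with
\[
\lim_{n\to\infty}\tfrac{1}{n}\log\|\mathcal{L}_{\mathsf{A}}(x,n)\|=\lambda=\inf_{n\geq 1}\tfrac{1}{n}\int\log\|\mathcal{L}_{\mathsf{A}}(x,n)\|\,d\mu(x)
\]
for $\mu$-a.e.\ $x\in\Sigma_\ell$. If $\lambda<0$, then $\|\mathcal{L}_{\mathsf{A}}(x,n)\|$ decays exponentially for $\mu$-almost every $x$, contradicting $\mu(E)>0$; hence $\lambda\geq 0$. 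Since $\lambda\leq\log\varrho(\mathsf{A})$ always (either directly from \eqref{fartier}, or by Theorem~\ref{simple}), this immediately yields $\varrho(\mathsf{A})\geq 1$.

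If $\varrho(\mathsf{A})>1$ the conclusion is reached. Otherwise $\varrho(\mathsf{A})=1$, in which case the chain of inequalities collapses to $\lambda=0=\log\varrho(\mathsf{A})$, so by the definition of a maximising measure $\mu\in\mathcal{M}_{\max}(\mathsf{A})$. By the definition $Z_{\mathsf{A}}:=\bigcup_{\nu\in\mathcal{M}_{\max}(\mathsf{A})}\supp\nu$ we therefore have $Z_{\mathsf{A}}\supseteq\supp\mu=\Sigma_\ell$, and combining with the trivial reverse inclusion gives $Z_{\mathsf{A}}=\Sigma_\ell$ as required.

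I do not anticipate any genuine obstacle here; the lemma is essentially a bookkeeping exercise translating the failure of Markov stability into the statement that some full-support ergodic measure is maximising, which then feeds directly into the definition of $Z_{\mathsf{A}}$. The only mildly delicate point is confirming that a full Markov measure has topological support equal to $\Sigma_\ell$, which is a standard feature of Markov measures on the full shift whose transition matrix has all entries positive.
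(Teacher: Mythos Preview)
Your proposal is correct and follows essentially the same argument as the paper: apply the subadditive ergodic theorem to the full Markov measure witnessing instability to obtain a nonnegative top Lyapunov exponent, conclude $\varrho(\mathsf{A})\geq 1$, and in the boundary case identify $\mu$ as a maximising measure whose full support forces $Z_{\mathsf{A}}=\Sigma_\ell$. The only cosmetic difference is that the paper phrases the contradiction step more tersely, but the logic is identical.
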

\begin{proof}
If $\mu \in \mathcal{E}_\sigma$ is any full Markov measure, then by the subadditive ergodic theorem (given in the appendix as Theorem \ref{SAET}) we have for $\mu$-almost-every $x \in \Sigma_\ell$
\begin{equation}\label{obvious} \lim_{n \to \infty} \frac{1}{n}\log\|\mathcal{L}_{\mathsf{A}}(x,n)\|= \inf_{n \geq 1}\frac{1}{n}\int \log\|\mathcal{L}_{\mathsf{A}}(z,n)\|\,d\mu(z) \leq \log\varrho(\mathsf{A}).\end{equation}
If the discrete linear inclusion associated to $\mathsf{A}$ is not Markov asymptotically stable then there exists a full Markov measure $\mu \in \mathcal{E}_\sigma$ such that $\|\mathcal{L}_{\mathsf{A}}(x,n)\|$ does not converge to zero $\mu$-a.e. in the limit as $n \to \infty$. Using \eqref{obvious} we deduce that $\inf_{n \geq 1}\frac{1}{n}\int \log\|\mathcal{L}_{\mathsf{A}}(x,n)\|\,d\mu(x) \geq 0$ and therefore $\varrho(\mathsf{A}) \geq 1$. If $\varrho(\mathsf{A})>1$ then there is nothing left to prove, so we shall assume that $\varrho(\mathsf{A})=1$. In this case the measure $\mu$ belongs to $\mathcal{M}_{\max}(\mathsf{A})$, and hence $\supp \mu$ is contained in $Z_{\mathsf{A}}$. Since every full Markov measure has support equal to $\Sigma_\ell$ we conclude that $Z_{\mathsf{A}}=\Sigma_\ell$ as claimed.
\end{proof}

\begin{proof}[of Theorem \ref{mkv}] Suppose that the discrete linear inclusion associated to $\mathsf{A}$ is periodically asymptotically stable. We may deduce from Theorem \ref{struc}(iii) that
\[\varrho(\mathsf{A})= \sup_{n \geq 1}\left\{\rho(A_{i_1}\cdots A_{i_n})^{\frac{1}{n}} \colon (i_1,\ldots,i_n)\in\{1,\ldots,\ell\}^n\right\},\]
an identity originally due to M. A. Berger and Y. Wang \cite{BW}. It follows from this identity together with periodic asymptotic stablility that $\varrho(\mathsf{A}) \leq 1$. If $\varrho(\mathsf{A})<1$ then the associated discrete linear inclusion is absolutely asymptotically stable and hence is Markov asymptotically stable. If on the other hand $\varrho(\mathsf{A})=1$, then it follows from Theorem \ref{struc}(iv) together with periodic asymptotic stability that the Mather set $Z_{\mathsf{A}}$ does not contain any periodic orbits. In particular $Z_{\mathsf{A}} \neq \Sigma_\ell$, and it follows by Lemma \ref{pasmas} that the discrete linear inclusion associated to $\mathsf{A}$ is Markov asymptotically stable. The proof is complete.
\end{proof}

\section{Applications to the study of the 1-ratio of pairs of matrices}\label{appratio}

In the influential article \cite{LW}, J. Lagarias and Y. Wang posed the following question: if $\mathsf{A}=\{A_1,\ldots,A_\ell\}$ is a finite set of real $d \times d$ matrices, is it always the case that there exist an integer $n \geq 1$ and indices $i_1,\ldots,i_n \in \{1,\ldots,\ell\}$ such that $\rho(A_{i_1}\cdots A_{i_n})=\varrho(\mathsf{A})^n$? This question was answered negatively in 2002 by T. Bousch and J. Mairesse \cite{BM}, but continues to stimulate research \cite{BTV,CGSC,HMST,JB,Koz3,LS}.

Let us say that $\mathsf{A}$ has the \emph{finiteness property} if Lagarias and Wang's condition holds for $\mathsf{A}$. Examples of pairs of $2 \times 2$ matrices which do not satisfy the finiteness property were constructed in \cite{BTV,BM,HMST,Koz3} by a variety of different methods; the latter three of these four articles possess some important common features which we now describe. In each case one studies a family of pairs of $2 \times 2$ matrices $\mathsf{A}_\lambda = \{A_1(\lambda),A_2(\lambda)\}$ depending continuously on a parameter $\lambda$ which belongs to a connected subregion $\Delta$ of some $\mathbb{R}^k$. It is then shown that for each $\lambda$ in this region, if $x \in \Sigma_2$ has the property that $\mathcal{L}_{\mathsf{A}_\lambda}(x,n)$ grows rapidly in some suitable sense as $n \to \infty$, then the sequence $x$ has a well-defined proportion of $1$'s depending only on $\lambda$: that is, in the limit as $n \to \infty$ the quantity $\frac{1}{n}\#\{ 1 \leq j \leq n\colon x_j=1\}$ converges to some value $\mathfrak{r}(\lambda)\in[0,1]$. It is then shown that the $1$-ratio function $\mathfrak{r}\colon \Delta \to [0,1]$ is continuous and nonconstant. Since $\Delta$ is connected this allows us to deduce that there exists $\lambda_0 \in \Delta$ such that $\mathfrak{r}(\lambda_0)$ is irrational. It follows that $\mathsf{A}_{\lambda_0}$ cannot have the finiteness property, since if it were the case that $\rho(A_{i_m}(\lambda_0)\cdots A_{i_1}(\lambda_0))=\varrho(\mathsf{A}_{\lambda_0})^m$, then the periodic sequence $x \in \Sigma_2$ defined by $x_{km+j}:=i_j$ for all $k \geq 0$ and $1 \leq j \leq m$ would have the property that $\mathcal{L}_{\mathsf{A}_{\lambda_0}}(x,n)$ grows rapidly, and therefore $\lim_{n \to \infty}\frac{1}{n}\#\{ 1 \leq j \leq n\colon x_j=1\} =\mathfrak{r}(\lambda_0)\notin \mathbb{Q}$; but by periodicity $\mathfrak{r}(\lambda_0)$ must be rational with denominator not greater than $m$, yielding a contradiction.

We remark that the counterexample to the Lagarias-Wang finiteness conjecture given by Blondel, Theys and Vladimirov \cite{BTV} is currently the only counterexample in the literature which does not make direct use of the approach given above. However, a detailed investigation by J. Theys \cite{Tthesis} shows that a continuous $1$-ratio also exists for the family of pairs of matrices used in \cite{BTV} to construct a counterexample. This was exploited by Hare, Sidorov, Theys and the present author in \cite{HMST} to exhibit an \emph{explicit} pair of matrices which fails to have the finiteness property.

A major point of difference between the articles \cite{BM,HMST,Koz3} is the precise definition which is used to express the property that $\mathcal{L}_{\mathsf{A}}(x,n)$ ``grows rapidly''. In the examples considered by Bousch and Mairesse, it is shown that for each of the pairs of matrices $\mathsf{A}$ being considered, there is a unique Borel probability measure on $\Sigma_2$ which is invariant under the shift and which maximises the a.e. pointwise limit of $\frac{1}{n}\log\|\mathcal{L}_{\mathsf{A}}(x,n)\|$; in our formalism this corresponds to showing that $\mathcal{M}_{\max}(\mathsf{A})$ contains a unique measure. The concept of rapidly growing sequence therefore corresponds to that of a measure-theoretically typical sequence with respect to any maximising measure. In the article \cite{HMST}, a sequence is considered to be ``rapidly growing'' for the purpose of defining the $1$-ratio if it is weakly extremal in the sense defined in \S2. Finally, in the article \cite{Koz3}, the notion of rapidly growing sequence $x \in \Sigma_2$ used by Kozyakin is that there exists a Barabanov norm $\vvv\cdot \vvv$ for $\mathsf{A}$ and a vector $v \in \mathbb{C}^d$ such that $\varrho(\mathsf{A})^{-n}\vvv \mathcal{L}_{\mathsf{A}}(x,n)v\vvv = \vvv v \vvv \neq 0$ for every $n \geq 1$. One of the objectives of this section is to show that the different definitions of $1$-ratio described above yield equivalent results; this is undertaken in Proposition \ref{e1r}. We thus give a general and unified definition of the $1$-ratio of a finite set of square matrices.

A second point in common between the articles \cite{BM,HMST,Koz3} is that the methods used to prove the continuity of the function $\mathfrak{r}$ do not easily generalise to broader classes of matrices. While the ergodic-theoretic approach of Bousch and Mairesse is relatively general, the context of their theorems imposes strict positivity hypotheses on the matrices comprising the set $\mathsf{A}$. The arguments in \cite{HMST} and \cite{Koz3} are much more highly specialised, and seem unlikely to be applicable beyond the case of pairs of real $2 \times 2$ matrices, with one matrix being upper triangular and the other lower triangular. The second objective of this section is to show in great generality that if a well-defined $1$-ratio exists for each member of a family of finite sets of square matrices, then it of necessity varies continuously within that family.

Given integers $i,\ell$ with $1 \leq i \leq \ell$, we let $[i]$ denote the set of all sequences $x \in \Sigma_\ell$ whose first entry is equal to $i$. For every $x \in\Sigma_{\mathcal{I}}$ and $n \geq 1$ we thus have $\sum_{k=0}^{n-1}\chi_{[i]}(\sigma^kx)=\#\{1 \leq j \leq n \colon x_j=i\}$. Note that $[i]$ is both open and closed as a subset of $\Sigma_\ell$. Given a set $\mathsf{A}=\{A_1,\ldots,A_\ell\}\subset \Mat_d(\mathbb{C})$, let us say that $x \in \Sigma_\ell$ is \emph{Kozyakin extremal} if there exist a Barabanov norm $\vvv\cdot\vvv$ for $\mathsf{A}$ and a vector $v \in \mathbb{C}^d$ with $\vvv v \vvv=1$ such that $\vvv\mathcal{L}_{\mathsf{A}}(x,n)v\vvv=\varrho(\mathsf{A})^n$ for all $n \geq 1$.
The following proposition shows that the definitions of $1$-ratio used by Bousch-Mairesse in \cite{BM}, by Hare-Morris-Sidorov-Theys in \cite{HMST} and by Kozyakin in \cite{Koz3} are equivalent:
\begin{proposition}\label{e1r}
Let $\mathsf{A}=\{A_1,\ldots,A_\ell\} \subset \Mat_d(\mathbb{C})$, and let $\gamma \in [0,1]$ and $1 \leq i \leq \ell$. Then the following properties are equivalent:
\begin{enumerate}
\item
For every weakly extremal orbit $x \in \Sigma_{\ell}$ we have $\frac{1}{n}\sum_{k=0}^{n-1}\chi_{[i]}(\sigma^kx) \to \gamma$.
\item
For every strongly extremal orbit $x \in \Sigma_{\ell}$ we have $\frac{1}{n}\sum_{k=0}^{n-1}\chi_{[i]}(\sigma^kx) \to \gamma$.
\item
For every ergodic $\mu \in \mathcal{M}_{\max}(\mathsf{A})$ we have $\mu([i])=\gamma$.
\item
For every $\mu \in \mathcal{M}_{\max}(\mathsf{A})$ we have $\mu([i])=\gamma$.
\end{enumerate}
Suppose additionally that there exists a Barabanov norm for $\mathsf{A}$. Then each of properties (i)-(iv) is additionally equivalent to the following property: for every Kozyakin extremal $x \in \Sigma_\ell$ , we have $\frac{1}{n}\sum_{k=0}^{n-1}\chi_{[i]}(\sigma^kx) \to \gamma$.
\end{proposition}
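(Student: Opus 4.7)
\emph{Plan.} First I record the elementary structural facts. Each cylinder $[i]$ is both open and closed in $\Sigma_\ell$, so $\chi_{[i]}$ is continuous. Since every strongly extremal sequence is weakly extremal, the implication (i)$\Rightarrow$(ii) is immediate. The equivalence (iii)$\Leftrightarrow$(iv) comes from Proposition \ref{rabbit}, which says that the ergodic members of $\mathcal{M}_{\max}(\mathsf{A})$ are exactly its extreme points: the Choquet representation then writes any $\mu \in \mathcal{M}_{\max}(\mathsf{A})$ as a barycenter $\int \nu\,d\mathbb{P}(\nu)$ of ergodic maximising measures, so $\mu([i]) = \int \nu([i])\,d\mathbb{P}(\nu)$, and the two conditions coincide.

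Next I close the loop among (i)--(iv). For (ii)$\Rightarrow$(iii), fix an ergodic $\mu \in \mathcal{M}_{\max}(\mathsf{A})$; by Theorem \ref{struc}(ii) we have $\mu(Z_{\mathsf{A}})=1$, and by Theorem \ref{struc}(iv) every $x \in Z_{\mathsf{A}}$ is strongly extremal. Birkhoff's ergodic theorem applied to the bounded continuous function $\chi_{[i]}$ produces a $\mu$-typical $x \in Z_{\mathsf{A}}$ with $\frac{1}{n}\sum_{k=0}^{n-1}\chi_{[i]}(\sigma^k x)\to\mu([i])$, and hypothesis (ii) forces $\mu([i])=\gamma$. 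For (iv)$\Rightarrow$(i), let $x \in \Sigma_\ell$ be weakly extremal and set $\mu_n := \frac{1}{n}\sum_{k=0}^{n-1}\delta_{\sigma^k x}$. By Lemma \ref{wkx} applied to $f_n(y):=\|\mathcal{L}_{\mathsf{A}}(y,n)\|$, every weak-$*$ accumulation point of $(\mu_n)$ lies in $\mathcal{M}_{\max}(\mathsf{A})$, hence by (iv) assigns mass $\gamma$ to the (clopen) set $[i]$. Since $\chi_{[i]}$ is continuous and $\mathcal{M}$ is compact and metrisable, every accumulation point of the bounded real sequence $\int \chi_{[i]}\,d\mu_n = \frac{1}{n}\sum_{k=0}^{n-1}\chi_{[i]}(\sigma^k x)$ equals $\gamma$, so the full sequence converges to $\gamma$.

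It remains to handle the Kozyakin condition under the assumption that a Barabanov norm $\vvv\cdot\vvv$ exists. If $x$ is Kozyakin extremal with witness $v$, then equivalence of norms gives $\|\mathcal{L}_{\mathsf{A}}(x,n)\| \geq C^{-1}\|v\|^{-1}\vvv\mathcal{L}_{\mathsf{A}}(x,n)v\vvv = C^{-1}\|v\|^{-1}\varrho(\mathsf{A})^n$, so $x$ is strongly extremal; this gives (ii)$\Rightarrow$(v). For the converse, I plan to show that every $x \in Z_{\mathsf{A}}$ is Kozyakin extremal, after which the Birkhoff argument used for (ii)$\Rightarrow$(iii) yields (v)$\Rightarrow$(iii). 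By Theorem \ref{struc}(iv), $\vvv\mathcal{L}_{\mathsf{A}}(x,n)\vvv=\varrho(\mathsf{A})^n$ for every $n \geq 1$, so one can choose unit vectors $v_n$ with $\vvv\mathcal{L}_{\mathsf{A}}(x,n)v_n\vvv=\varrho(\mathsf{A})^n$. Pass to a subsequence $v_{n_k}\to v$ by compactness of the $\vvv\cdot\vvv$-unit sphere. For any fixed $m$ and $n_k \geq m$, submultiplicativity of $\vvv\cdot\vvv$ together with $\vvv\mathcal{L}_{\mathsf{A}}(\sigma^m x,n_k-m)\vvv \leq \varrho(\mathsf{A})^{n_k-m}$ gives
\[\varrho(\mathsf{A})^{n_k} = \vvv\mathcal{L}_{\mathsf{A}}(x,n_k)v_{n_k}\vvv \leq \varrho(\mathsf{A})^{n_k-m}\vvv\mathcal{L}_{\mathsf{A}}(x,m)v_{n_k}\vvv,\]
whence $\vvv\mathcal{L}_{\mathsf{A}}(x,m)v_{n_k}\vvv \geq \varrho(\mathsf{A})^m$; letting $k \to \infty$ and using the reverse bound $\vvv\mathcal{L}_{\mathsf{A}}(x,m)v\vvv \leq \vvv\mathcal{L}_{\mathsf{A}}(x,m)\vvv\vvv v\vvv = \varrho(\mathsf{A})^m$ yields equality for every $m$.

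The main technical obstacle is precisely this last passage: producing a single witnessing vector $v$ valid at every time $n$ simultaneously, rather than the easily obtained sequence of time-dependent witnesses $v_n$. Everything else is Birkhoff, Choquet, continuity of $\chi_{[i]}$, and the structural properties of $Z_{\mathsf{A}}$ already supplied by Theorem \ref{struc}.
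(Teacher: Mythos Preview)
Your proof is correct and follows essentially the same route as the paper: the cycle (i)$\Rightarrow$(ii)$\Rightarrow$(iii)$\Rightarrow$(iv)$\Rightarrow$(i) via Theorem~\ref{struc}(iv), Birkhoff, Choquet, and Lemma~\ref{wkx} is identical, and the Kozyakin part likewise reduces to showing every $x\in Z_{\mathsf{A}}$ is Kozyakin extremal --- your compactness argument with the vectors $v_{n_k}$ is a minor variant of the paper's (which instead extracts a limit of the normalized matrices $\varrho(\mathsf{A})^{-n_j}\mathcal{L}_{\mathsf{A}}(x,n_j)\to L$ and then picks a norming vector for $L$), but the core inequality is the same. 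One small slip: in your application of Lemma~\ref{wkx} you should take $f_n(y):=\log\|\mathcal{L}_{\mathsf{A}}(y,n)\|$, not $\|\mathcal{L}_{\mathsf{A}}(y,n)\|$, since the lemma requires a subadditive sequence.
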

\begin{proof}
Since every strongly extremal orbit is weakly extremal it is clear that (i) implies (ii). If $\mathsf{A}$ satisfies (ii), then by part (iv) of Theorem \ref{struc} it follows that for every $\mu \in \mathcal{M}_{\max}(\mathsf{A})$ we have $\lim_{n \to \infty} \frac{1}{n}\sum_{k=0}^{n-1}\chi_{[i]}(\sigma^kx) = \gamma$ for $\mu$-a.e. $x$, and hence by the Birkhoff ergodic theorem we have $\mu([i])=\gamma$ for every ergodic $\mu \in \mathcal{M}_{\max}(\mathsf{A})$, which gives (iii).

If (iii) holds, let us choose any $\mu \in \mathcal{M}_{\max}(\mathsf{A})$. Using Proposition \ref{rabbit} and Choquet's theorem as in previous sections, we may find a measure $\mathbb{P}$ on the compact convex set $\mathcal{M}_{\max}(\mathsf{A})$ such that $\mathbb{P}(\mathcal{M}_{\max}(\mathsf{A}) \cap \mathcal{E}_\sigma) = 1$ and $\int g\,d\mu=\iint g\,dm\,d\mathbb{P}(m)$ for all continuous functions $g \colon \Sigma_{
\ell} \to \mathbb{R}$. The function $\chi_{[i]}$ is continuous, and since by hypothesis $\int \chi_{[i]}\,dm=\gamma$ for $\mathbb{P}$-almost-every $m$ we have $\mu([i])=\gamma$ by integration. Since $\mu$ is arbitrary we conclude that (iv) holds.

Now let us suppose that $\mathsf{A}$ satisfies (iv). Let $x\in\Sigma_\ell$ be weakly extremal, and for each $n \geq 1$ define a Borel probability measure on $\Sigma_\ell$ by $\mu_n:=\frac{1}{n}\sum_{k=0}^{n-1}\delta_{\sigma^kx}$, where $\delta_z$ denotes the Dirac probability measure concentrated at $z$. Choose any increasing sequence of integers $(n_j)$ with the property that $\frac{1}{n_j}\sum_{k=0}^{n_j-1} \chi_{[i]}(\sigma^kx)$ converges to some limit; we will show that the only possibly value of this limit is $\gamma$, which implies (i). By replacing $(n_j)$ with a subsequence if necessary, using the fact that $\mathcal{M}$ is compact and metrisable, we may assume that $\mu_{n_j} \to \mu$ for some measure $\mu \in \mathcal{M}$. Since $x$ is weakly extremal, we may apply Lemma \ref{wkx} with $f_n(x):=\log\|\mathcal{L}_{\mathsf{A}}(x,n)\|$ to deduce that $\mu\in\mathcal{M}_\sigma$ and $\inf_{m\geq 1}\frac{1}{m}\int \log\|\mathcal{L}_{\mathsf{A}}(x,m)\|\,d\mu(x) = \log\varrho(\mathsf{A})$ and therefore $\mu \in \mathcal{M}_{\max}(\mathsf{A})$. Since $\mu_{n_j} \to \mu$ and the function $\chi_{[i]} \colon \Sigma_\ell \to \mathbb{R}$ is continuous, it follows that $\lim_{j \to \infty} \frac{1}{n_j}\sum_{k=0}^{n_j-1} \chi_{[i]}(\sigma^kx)=\lim_{n\to\infty} \int \chi_{[i]}\,d\mu_n=\int \chi_{[i]}\,d\mu = \mu([i])=\gamma$. This completes the proof of the equivalence of statements (i)-(iv).

Finally, let us suppose that a Barabanov norm for $\mathsf{A}$ exists. We claim that every $x \in Z_{\mathsf{A}}$ is Kozyakin extremal. Choose any Barabanov norm $\vvv\cdot\vvv$ for $\mathsf{A}$, and let $x \in Z_{\mathsf{A}}$. By part (iv) of Theorem \ref{struc} we have $\vvv \mathcal{L}_{\mathsf{A}}(x,n)\vvv=\varrho(\mathsf{A})^n$ for all $n \geq 1$. It follows that there exists a subsequence $(n_j)$ of the natural numbers such that $\varrho(\mathsf{A})^{-n_j}\mathcal{L}_{\mathsf{A}}(x,n_j)$ converges as $j \to \infty$ to some matrix $L$ such that $\vvv L \vvv=1$. Choose a vector $v \in \mathbb{C}^d$ such that $\vvv v \vvv=\vvv Lv \vvv = 1$. Fix any $n \geq 1$ and let $j$ be sufficiently large that $n_j>n$. We have
\[\varrho(\mathsf{A})^{-n_j} \vvv\mathcal{L}_{\mathsf{A}}(x,n_j)v\vvv \leq \varrho(\mathsf{A})^{-n_j}\vvv\mathcal{L}_{\mathsf{A}}(\sigma^nx,n_j-n)\vvv. \vvv\mathcal{L}_{\mathsf{A}}(x,n)v\vvv \leq \varrho(\mathsf{A})^{-n} \vvv\mathcal{L}_{\mathsf{A}}(x,n)v\vvv \leq 1,\]
and since $\lim_{j \to \infty} \varrho(\mathsf{A})^{-n_j} \vvv\mathcal{L}_{\mathsf{A}}(x,n_j)v\vvv = \vvv Lv \vvv=1$ we deduce that $\vvv \mathcal{L}_{\mathsf{A}}(x,n)v\vvv=\varrho(\mathsf{A})^n$. It follows that $x$ is Kozyakin extremal as claimed. Now, if (ii) holds, then since every Kozyakin extremal point is strongly extremal, we have $\frac{1}{n}\sum_{k=0}^{n-1}\chi_{[i]}(\sigma^kx) \to \gamma$ for every Kozyakin extremal $x \in \Sigma_\ell$. Conversely, if $\frac{1}{n}\sum_{k=0}^{n-1}\chi_{[i]}(\sigma^kx) \to \gamma$ for every $x \in \Sigma_\ell$ which is Kozyakin extremal, then in particular $\frac{1}{n}\sum_{k=0}^{n-1}\chi_{[i]}(\sigma^kx) \to \gamma$ for every $x \in Z_{\mathsf{A}}$. It follows that $\frac{1}{n}\sum_{k=0}^{n-1}\chi_{[i]}(\sigma^kx) \to \gamma$ $\mu$-a.e. for every ergodic $\mu \in \mathcal{M}_{\max}(\mathsf{A})$, and by the Birkhoff ergodic theorem this implies that (iii) holds. This completes the proof of the proposition.
\end{proof}

Given a finite set $\mathsf{A}=\{A_1,\ldots,A_\ell\}\subset \Mat_d(\mathbb{C})$ and an integer $1 \leq i \leq \ell$, we shall say that $\mathsf{A}$ has a \emph{unique optimal} $i$-\emph{ratio} if there exists a real number $\gamma \in [0,1]$ such that one of the conditions in Proposition \ref{e1r} is satisfied. As previously mentioned, some examples of pairs of matrices for which a unique optimal $1$-ratio exists are given in \cite{BM,HMST,Koz3}. Note that Proposition \ref{e1r} implies that for any $\ell \geq 2$, a sufficient condition for $\mathsf{A}=\{A_1,\ldots,A_\ell\}$ to have a unique optimal $1$-ratio is that $\mathcal{M}_{\max}(\mathsf{A})$ contains a unique measure. Conversely, if $\ell \geq 2$ and $Z_{\mathsf{A}} = \Sigma_\ell$ then it is clear that $\mathsf{A}$ does not have a unique optimal $i$-ratio for any $i$; this situation arises for example if $\mathsf{A}$ consists entirely of isometries of $\mathbb{R}^d$. It is interesting to ask whether a `typical' finite set of $d \times d$ complex matrices admits a unique optimal $1$-ratio. In view of Proposition \ref{e1r}, it would be sufficient to show that for a typical finite set of complex $d \times d$ matrices $\mathsf{A}$, the set $\mathcal{M}_{\max}(\mathsf{A})$ is a singleton set.

The following general result shows that when $i$-ratios exist, they are continuous:
\begin{theorem}\label{1rc}
Let $\Delta$ be a metric space, let $A_1,\ldots,A_\ell \colon \Delta \to \Mat_d(\mathbb{C})$ be continuous functions, and let $1 \leq i \leq \ell$. Suppose that for each $\lambda \in \Delta$ the set $\mathsf{A}_\lambda := \{A_1(\lambda),\ldots,A_\ell(\lambda)\}$ has a unique optimal $i-ratio$, which we denote by $\mathfrak{r}(\lambda)$. Then $\mathfrak{r} \colon \Delta \to [0,1]$ is continuous.
\end{theorem}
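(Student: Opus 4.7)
The plan is to exploit characterisation (iv) of Proposition \ref{e1r}, which tells us that $\mathfrak{r}(\lambda) = \mu([i])$ for every $\mu \in \mathcal{M}_{\max}(\mathsf{A}_\lambda)$. Given $\lambda_n \to \lambda$ in $\Delta$, I would pick some $\mu_n \in \mathcal{M}_{\max}(\mathsf{A}_{\lambda_n})$ (which exists by Proposition \ref{rabbit}) and, using compactness of $\mathcal{M}_\sigma$, extract a weak-* convergent subsequence $\mu_{n_j} \to \mu \in \mathcal{M}_\sigma$. Since $[i]\subset\Sigma_\ell$ is clopen, $\chi_{[i]}$ is continuous and hence $\mathfrak{r}(\lambda_{n_j}) = \mu_{n_j}([i]) \to \mu([i])$. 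If $\mu$ can be shown to lie in $\mathcal{M}_{\max}(\mathsf{A}_\lambda)$ then Proposition \ref{e1r}(iv) identifies $\mu([i]) = \mathfrak{r}(\lambda)$; since the same argument applies starting from any subsequence of $(\lambda_n)$, every subsequence of $(\mathfrak{r}(\lambda_n))$ admits a further subsequence converging to $\mathfrak{r}(\lambda)$, and the full sequence therefore converges to $\mathfrak{r}(\lambda)$.

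The key step is to show that any weak-* limit $\mu$ of a sequence of maximising measures $\mu_n \in \mathcal{M}_{\max}(\mathsf{A}_{\lambda_n})$ with $\lambda_n \to \lambda$ is itself maximising for $\mathsf{A}_\lambda$. Fix $k \geq 1$. By Proposition \ref{rabbit} applied with the Euclidean operator norm (which is submultiplicative),
\[\frac{1}{k}\int_{\Sigma_\ell} \log\|\mathcal{L}_{\mathsf{A}_{\lambda_n}}(x,k)\|\,d\mu_n(x) \geq \log\varrho(\mathsf{A}_{\lambda_n})\]
for every $n$. The lower semicontinuity $\liminf_n \log\varrho(\mathsf{A}_{\lambda_n}) \geq \log\varrho(\mathsf{A}_\lambda)$ follows from the Berger--Wang formula together with continuity of the spectral radius on individual matrices. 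Combining this with an upper semicontinuity estimate of the form
\[\limsup_{n\to\infty}\int_{\Sigma_\ell} \log\|\mathcal{L}_{\mathsf{A}_{\lambda_n}}(x,k)\|\,d\mu_n(x) \leq \int_{\Sigma_\ell} \log\|\mathcal{L}_{\mathsf{A}_\lambda}(x,k)\|\,d\mu(x)\]
would yield $\log\varrho(\mathsf{A}_\lambda) \leq \tfrac{1}{k}\int \log\|\mathcal{L}_{\mathsf{A}_\lambda}(x,k)\|\,d\mu(x)$ for every $k \geq 1$; taking the infimum over $k$ and combining with the reverse inequality from Theorem \ref{simple} gives $\mu \in \mathcal{M}_{\max}(\mathsf{A}_\lambda)$.

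The upper semicontinuity step is the main obstacle, because $\log\|\mathcal{L}_{\mathsf{A}_\lambda}(x,k)\|$ may take the value $-\infty$ at points where the matrix product degenerates, so weak-* convergence cannot be applied directly to the unbounded function $\log\|\mathcal{L}\|$. I would handle this with a standard truncation: for each $\varepsilon > 0$ let $g_\varepsilon^{\lambda'}(x) := \log\max\{\|\mathcal{L}_{\mathsf{A}_{\lambda'}}(x,k)\|,\varepsilon\}$. This is a bounded continuous function of $x$, and because $\log$ is uniformly continuous on $[\varepsilon,C]$ for any $C$ and the map $(x,\lambda') \mapsto \|\mathcal{L}_{\mathsf{A}_{\lambda'}}(x,k)\|$ is jointly continuous on the compact set $\Sigma_\ell \times (\{\lambda_n\}_{n\geq 1}\cup\{\lambda\})$, we see that $g_\varepsilon^{\lambda_n}$ converges uniformly in $x$ to $g_\varepsilon^\lambda$ as $n \to \infty$. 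Uniform convergence combined with weak-* convergence $\mu_n \to \mu$ then gives $\int g_\varepsilon^{\lambda_n}\,d\mu_n \to \int g_\varepsilon^\lambda\,d\mu$. The bound $\log\|\mathcal{L}\| \leq g_\varepsilon$ then yields
\[\limsup_{n\to\infty}\int \log\|\mathcal{L}_{\mathsf{A}_{\lambda_n}}(x,k)\|\,d\mu_n(x) \leq \int g_\varepsilon^\lambda(x)\,d\mu(x)\]
for every $\varepsilon > 0$, and an application of the monotone convergence theorem as $\varepsilon \to 0^+$ reduces the right-hand side to $\int \log\|\mathcal{L}_{\mathsf{A}_\lambda}(x,k)\|\,d\mu(x)$, completing the estimate and hence the proof.
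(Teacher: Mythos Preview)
Your argument is correct and follows essentially the same route as the paper. The paper isolates the key step---that any weak-* accumulation point of maximising measures $\mu_n\in\mathcal{M}_{\max}(\mathsf{A}_{\lambda_n})$ lies in $\mathcal{M}_{\max}(\mathsf{A}_\lambda)$---as a separate lemma (Lemma~\ref{technobanana}), proved via the same truncation device $\max\{\log\|\mathcal{L}\|,-k\}$ that you use with $\log\max\{\|\mathcal{L}\|,\varepsilon\}$, combined with uniform convergence plus weak-* convergence and monotone convergence; the only cosmetic difference is that the paper cites full continuity of $\varrho$ from \cite{Koz4,Wirth1}, whereas you obtain the needed lower semicontinuity directly from Berger--Wang, which is sufficient here.
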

In order to prove Theorem \ref{1rc} we shall use the following general lemma.
\begin{lemma}\label{technobanana}
Let $\mathsf{A} \subset \Mat_d(\mathbb{C})$  be a compact set indexed in $\mathcal{I}$. For each $n \geq 1$ let $\mathsf{A}_n = \left\{A_i^{(n)} \colon i \in \mathcal{I}\right\}$ be a compact subset of $\Mat_d(\mathbb{C})$ indexed in $\mathcal{I}$, and suppose that the uniform difference $\sup\left\{\left\|A_i-A^{(n)}_i\right\| \colon i \in \mathcal{I}\right\}$ converges to zero as $n \to \infty$. If $\mu_n \in \mathcal{M}_{\max}(\mathsf{A}_n)$ for each $n \geq 1$, and $\mu \in \mathcal{M}_\sigma$ is a limit point of $(\mu_n)$, then $\mu \in \mathcal{M}_{\max}(\mathsf{A})$. 
\end{lemma}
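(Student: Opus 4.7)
My plan is to establish the inequality
\[\inf_{k \geq 1}\frac{1}{k}\int \log\|\mathcal{L}_{\mathsf{A}}(x,k)\|\,d\mu(x) \geq \log\varrho(\mathsf{A})\]
by passing to the limit, for each fixed $k$, in the analogous inequality for $\mathsf{A}_{n_j}$ along a subsequence with $\mu_{n_j}\to\mu$. Combined with the opposite inequality supplied by Theorem \ref{simple}, this will identify $\mu$ as a maximising measure. The case $\varrho(\mathsf{A})=0$ is vacuous, so we may assume $\varrho(\mathsf{A})>0$. By Proposition \ref{rabbit} applied to the submultiplicative Euclidean operator norm, the hypothesis $\mu_n \in \mathcal{M}_{\max}(\mathsf{A}_n)$ yields
\[\frac{1}{k}\int \log\|\mathcal{L}_{\mathsf{A}_n}(x,k)\|\,d\mu_n(x) \geq \log\varrho(\mathsf{A}_n)\]
for every $k,n \geq 1$, and it remains to pass this inequality to the limit.

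For the right-hand side I need lower semicontinuity of $\varrho$ under the uniform convergence $\sup_{i}\|A_i-A_i^{(n)}\|\to 0$. Given $\varepsilon>0$, the Berger--Wang identity (which, as noted in \S\ref{appmarkov}, follows from Theorem \ref{struc}(iii)) furnishes a word $(i_1,\ldots,i_k)\in\mathcal{I}^k$ with $\rho(A_{i_k}\cdots A_{i_1})^{1/k} > \varrho(\mathsf{A})-\varepsilon$. Continuity of the spectral radius on $\Mat_d(\mathbb{C})$ together with the uniform convergence of the indexing gives $\rho(A^{(n)}_{i_k}\cdots A^{(n)}_{i_1})^{1/k}\to \rho(A_{i_k}\cdots A_{i_1})^{1/k}$, and since Gelfand's formula applied to the iterates of a periodic orbit yields $\varrho(\mathsf{A}_n)\geq \rho(A^{(n)}_{i_k}\cdots A^{(n)}_{i_1})^{1/k}$, we conclude $\liminf_n \varrho(\mathsf{A}_n)\geq \varrho(\mathsf{A})$.

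For the left-hand side I would truncate the integrand so as to restore uniform convergence. For each $M>0$ set
\[\varphi_{k,n}^M(x):=\max\{\log\|\mathcal{L}_{\mathsf{A}_n}(x,k)\|,-M\},\qquad \varphi_k^M(x):=\max\{\log\|\mathcal{L}_{\mathsf{A}}(x,k)\|,-M\}.\]
For fixed $k$, the uniform convergence $A_i^{(n)}\to A_i$ forces $\mathcal{L}_{\mathsf{A}_n}(\,\cdot\,,k)\to\mathcal{L}_{\mathsf{A}}(\,\cdot\,,k)$ uniformly on $\Sigma_{\mathcal{I}}$; composing with the map $t\mapsto\max\{\log t,-M\}$, which is uniformly continuous on $[0,\infty)$, shows $\varphi_{k,n}^M \to \varphi_k^M$ uniformly. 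Combined with the weak-$*$ convergence $\mu_{n_j}\to\mu$ this gives $\int\varphi_{k,n_j}^M\,d\mu_{n_j}\to\int\varphi_k^M\,d\mu$. Using $\varphi_{k,n}^M\geq \log\|\mathcal{L}_{\mathsf{A}_n}(\,\cdot\,,k)\|$ and the bound from the first paragraph, passage to the limit yields $\int\varphi_k^M\,d\mu \geq k\log\varrho(\mathsf{A})$ for every $M$; letting $M\to\infty$ via monotone convergence, legitimate because $\log\|\mathcal{L}_{\mathsf{A}}(\,\cdot\,,k)\|$ is bounded above by $k\log\max_i\|A_i\|$, gives $\int \log\|\mathcal{L}_{\mathsf{A}}(x,k)\|\,d\mu(x) \geq k\log\varrho(\mathsf{A})$, as required.

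The main obstacle is the truncation step in the third paragraph: because matrix products can be singular, the integrand $\log\|\mathcal{L}_{\mathsf{A}}(\,\cdot\,,k)\|$ is only upper semicontinuous on $\Sigma_{\mathcal{I}}$, and one cannot invoke weak-$*$ convergence of $\mu_n$ directly. Replacing the singularity of $\log$ at zero with a continuous plateau at height $-M$ restores uniform convergence of the integrands; boundedness above of the original integrands then permits the bias to be removed in the limit $M\to\infty$.
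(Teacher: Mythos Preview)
Your argument is correct and follows essentially the same route as the paper: pass to a subsequence along which $\mu_{n_j}\to\mu$, truncate the integrand $\log\|\mathcal{L}_{\mathsf{A}}(\cdot,k)\|$ from below to restore continuity, combine uniform convergence of the truncated integrands with weak-$*$ convergence of the measures, and then remove the truncation via monotone convergence. The only substantive difference is in how you handle the right-hand side: the paper simply invokes the known continuity $\varrho(\mathsf{A}_n)\to\varrho(\mathsf{A})$ from the literature, whereas you prove the weaker statement $\liminf_n\varrho(\mathsf{A}_n)\geq\varrho(\mathsf{A})$ directly via the Berger--Wang formula and continuity of the spectral radius on a fixed finite product. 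Your route is self-contained within the paper (since Berger--Wang has already been derived from Theorem~\ref{struc}(iii)) and only the lower bound is actually needed, so this is a mild improvement in economy.
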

\begin{proof}
We shall make use of the fact that the convergence hypothesis on $\mathsf{A}_n$ implies that $\lim_{n \to \infty}\varrho(\mathsf{A}_n) =\varrho(\mathsf{A})$. For a proof of this statement see e.g. \cite{Koz4,Wirth1}. By passing to an appropriate subsequence if necessary, we may assume that in fact $\mu$ is the weak-* limit of the sequence $(\mu_n)$.

Let us fix two integers $k,m \geq 1$. It follows from our hypotheses that the sequence of continuous functions from $\Sigma_{\mathcal{I}}$ to $\mathbb{R}$ defined by $x \mapsto \max\{\log \|\mathcal{L}_{\mathsf{A}_n}(x,m)\|,-k\}$ converges uniformly as $n \to \infty$ to the continuous function defined by $x \mapsto \max\{\log \|\mathcal{L}_{\mathsf{A}}(x,m)\|,-k\}$, and hence by a uniform estimate we may establish the equality
\[\lim_{n\to\infty}\left|\frac{1}{m}\int \left(\max\{\log \|\mathcal{L}_{\mathsf{A}_n}(x,m)\|,-k\}- \max\{\log \|\mathcal{L}_{\mathsf{A}}(x,m)\|,-k\}\right)d\mu_n\right|=0.\]
On the other hand, since $(\mu_n)$ converges weakly to $\mu$, we have
\[\lim_{n\to\infty}\left|\frac{1}{m}\int \max\{\log \|\mathcal{L}_{\mathsf{A}}(x,m)\|,-k\}d\mu_n- \frac{1}{m}\int\max\{\log \|\mathcal{L}_{\mathsf{A}}(x,m)\|,-k\}d\mu\right|=0\]
and combining these two estimates we obtain
\[\lim_{n \to \infty}\frac{1}{m}\int \max\{\log \|\mathcal{L}_{\mathsf{A}_n}(x,m)\|,-k\}d\mu_n = \frac{1}{m}\int \max\{\log \|\mathcal{L}_{\mathsf{A}}(x,m)\|,-k\}d\mu\]
for each $m$ and $k$. Hence for each $m,k \geq 1$,
\begin{align*}\frac{1}{m}\int \max\{\log \|\mathcal{L}_{\mathsf{A}}(x,m)\|,-k\}d\mu&=\lim_{n \to \infty}\frac{1}{m}\int \max\{\log \|\mathcal{L}_{\mathsf{A}_n}(x,m)\|,-k\}d\mu_n\\
&\geq\liminf_{n \to \infty}\frac{1}{m}\int \log \|\mathcal{L}_{\mathsf{A}_n}(x,m)\|\,d\mu_n\\
&\geq \liminf_{n \to \infty} \log\varrho(\mathsf{A}_n) = \log\varrho(\mathsf{A}),\end{align*}
since by hypothesis we have $\mu_n \in \mathcal{M}_{\max}(\mathsf{A}_n)$. Applying the monotone convergence theorem we find that for each $m \geq 1$
\[\frac{1}{m}\int \log\|\mathcal{L}_{\mathsf{A}}(x,m)\|\,d\mu = \lim_{k \to \infty} \frac{1}{m}\int \max\{\log\|\mathcal{L}_{\mathsf{A}}(x,m)\|,-k\}d\mu \geq \log\varrho(\mathsf{A}),\]
and since $m$ is arbitrary we conclude that $\mu \in \mathcal{M}_{\max}(\mathsf{A})$. The proof is complete.
\end{proof}

\begin{proof}[ of Theorem \ref{1rc}] We shall show that if $(\lambda_n)$ is a sequence of elements of $\Delta$ converging to a limit $\lambda \in \Delta$, then $\mathfrak{r}(\lambda_n)$ converges to $\mathfrak{r}(\lambda)$. Given such a sequence, for each $n \geq 1$ let us choose a measure $\mu_n \in \mathcal{M}_{\max}(\mathsf{A}_{\lambda_n})$. Let $(n_j)$ be a subsequence of the natural numbers such that $\mathfrak{r}(\lambda_{n_j})$ converges as $j \to \infty$ to a limit $\gamma \in [0,1]$. Replacing $(n_j)$ with a finer subsequence if necessary, and making use of the fact that $\mathcal{M}_\sigma$ is compact and metrisable, we may assume that $(\mu_{n_j})$ converges to a limit $\mu \in \mathcal{M}_\sigma$. By hypothesis we have $ \lim_{j \to \infty} \max\{\|A_k(\lambda)-A_k(\lambda_{n_j})\|\colon 1 \leq k \leq \ell\}=0$, so we may apply Lemma \ref{technobanana} to deduce that $\mu \in \mathcal{M}_{\max}(\mathsf{A}_\lambda)$. Since $\chi_{[i]} \colon \Sigma_\ell \to \mathbb{R}$ is continuous and $\mathfrak{r}(\lambda)$ is the $i$-ratio of $\mathsf{A}_\lambda$, by weak-* convergence we have $\mathfrak{r}(\lambda)=\mu([i])=\lim_{j \to \infty} \int \chi_{[i]}\,d\mu_{n_j} = \lim_{j \to \infty} \mathfrak{r}(\lambda_{n_j})=\gamma$. We conclude that $\mathfrak{r}(\lambda)$ is the only limit point of the bounded sequence $(\mathfrak{r}(\lambda_n))$, and therefore that sequence converges to $\mathfrak{r}(\lambda)$. The proof is complete.
\end{proof}

\section{Applications to the uniqueness of Barabanov norms}\label{appbara}

In this section we shall say that $\mathsf{A} \subset \Mat_d(\mathbb{C})$ is \emph{reducible} if there exists a nontrivial proper subspace of $\mathbb{C}^d$ which is preserved by every element of $\mathsf{A}$; otherwise we shall say that $\mathsf{A}$ is \emph{irreducible}. If $\mathsf{A} \subset \Mat_d(\mathbb{C})$ is a nonempty bounded set, we say that a norm $\vvv\cdot\vvv$ on $\mathbb{C}^d$ is a \emph{Barabanov norm} if the property $\varrho(\mathsf{A})\vvv v \vvv = \max_{A \in \mathsf{A}}\vvv Av \vvv$ is satisfied for all $v \in \mathbb{C}^d$. A fundamental result due to N. E. Barabanov demonstrates that if $\mathsf{A} \subset \Mat_d(\mathbb{C})$ is compact and irreducible, then it admits at least one Barabanov norm \cite{Ba}. The properties and applications of Barabanov norms were subsequently investigated in depth by F. Wirth and E. Plischke \cite{Wirth4,Wirth1,Wirth2,Wirth3}, and algorithms for their computation have been suggested by V. S. Kozyakin \cite{Koz2,Koz6}.

Since clearly any positive scalar multiple of a Barabanov norm is also a Barabanov norm, we shall say that $\mathsf{A}$ has a `unique' Barabanov norm to mean that all Barabanov norms for $\mathsf{A}$ are directly proportional to one another.
In \cite{MBara}, the present author investigated the question of when a finite irreducible set of matrices $\mathsf{A}$ admits a unique Barabanov norm.
 In this section we show that a sufficient condition given in that article for a finite set $\mathsf{A}$ to have a unique Barabanov norm can be described in terms of the associated Mather set $Z_{\mathsf{A}}$. This allows us to prove uniqueness of the Barabanov norm for a one-parameter family of pairs of matrices which was studied intensively in the articles \cite{BTV,HMST,Tthesis}.

Let $\mathsf{A}=\{A_1,\ldots,A_m\} \subset \Mat_d(\mathbb{C})$. In \cite{MBara}, we said that $\mathsf{A}$ has the \emph{unbounded agreements property} if the following property holds: if $j_1,j_2 \colon \mathbb{N} \to \{1,\ldots,m\}$ are sequences such that $\limsup_{n\to\infty} \|A_{j_i(n)}\cdots A_{j_i(1)}\|\varrho(\mathsf{A})^{-n} > 0$, then for each $k \geq 1$ there exist $\ell_1,\ell_2 \geq 1$ such that $j_1(\ell_1+i)=j_2(\ell_2+i)$ for all $1 \leq i \leq k$. This definition is motivated by the following result:
\begin{theorem}[(\cite{MBara})]\label{UBN}
Let $\mathsf{A} = \{A_1,\ldots,A_m\} \subset \Mat_d(\mathbb{C})$. Suppose that $\mathsf{A}$ is irreducible and has the unbounded agreements property, and the set of matrices $\wedge^2 \mathsf{A}:=\{\wedge^2 A_1,\ldots,\wedge^2 A_m\}$ satisfies $\varrho(\wedge^2\mathsf{A}) < \varrho(\mathsf{A})^2$. Then $\mathsf{A}$ has a unique Barabanov norm.
\end{theorem}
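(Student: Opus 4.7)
The plan is to show that under the three hypotheses any two Barabanov norms $\vvv\cdot\vvv_1, \vvv\cdot\vvv_2$ for $\mathsf{A}$ must be scalar multiples of each other. After rescaling so that $\varrho(\mathsf{A})=1$, two structural inputs are immediately available. First, by Theorem \ref{struc}(iv) (applicable since every irreducible $\mathsf{A}$ admits a Barabanov norm and is therefore relatively product bounded), for every $x \in Z_{\mathsf{A}}$, every $n \geq 1$, and each $j \in \{1,2\}$, the induced operator norm $\vvv \mathcal{L}_{\mathsf{A}}(x,n)\vvv_j$ equals $1$. Second, the hypothesis $\varrho(\wedge^2\mathsf{A}) < 1$ provides a uniform exponential singular-value gap along the Mather set: combining the identity $\sigma_1\sigma_2 = \|\wedge^2 \mathcal{L}_{\mathsf{A}}(x,n)\|$ with the strong extremality $\sigma_1(\mathcal{L}_{\mathsf{A}}(x,n)) \geq \varepsilon$ provided by Theorem \ref{struc}(iv), one sees that $\sigma_2(\mathcal{L}_{\mathsf{A}}(x,n))/\sigma_1(\mathcal{L}_{\mathsf{A}}(x,n))$ decays geometrically in $n$, uniformly in $x \in Z_{\mathsf{A}}$. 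In particular every accumulation point of $\{\mathcal{L}_{\mathsf{A}}(x,n)\}_{n \geq 1}$ in $\Mat_d(\mathbb{C})$ is a rank-one matrix with $\vvv\cdot\vvv_j$-operator-norm exactly equal to $1$.

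For $x \in Z_{\mathsf{A}}$ define the accumulation set $\mathcal{S}(x):=\bigcap_{m \geq 1}\overline{\{\mathcal{L}_{\mathsf{A}}(x,n) : n \geq m\}}$, a nonempty compact set of rank-one matrices. Writing each $P\in\mathcal{S}(x)$ as $P = v_P \otimes w_P^*$, let $\mathcal{V}_j \subset \mathbb{C}^d$ denote the set of image vectors normalised by $\vvv v_P\vvv_j = 1$, as $x$ ranges over $Z_{\mathsf{A}}$ and $P$ over $\mathcal{S}(x)$. A combination of the cocycle structure, the shift-invariance of $Z_{\mathsf{A}}$, and the irreducibility of $\mathsf{A}$ then shows that $\mathrm{span}\,\mathcal{V}_j = \mathbb{C}^d$.

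The role of the unbounded agreements property is to bridge the combinatorial structure of $Z_{\mathsf{A}}$ with the geometry of $\mathcal{V}_j$. Given two rank-one accumulations $P_1 \in \mathcal{S}(x)$ and $P_2 \in \mathcal{S}(y)$, unbounded agreement produces, for every $k$, matching finite blocks of length $k$ at some positions $\ell_1, \ell_2$ along $x$ and $y$; combined with the exponential singular-value gap above, this forces the image directions produced by the corresponding partial products to coalesce to within vanishing error. Define the ratio function $\varphi(v):=\vvv v\vvv_1/\vvv v\vvv_2$ on $\mathbb{C}^d \setminus\{0\}$; the coalescence argument then forces $\varphi$ to take a single value $c$ throughout $\mathcal{V}_j$. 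Since $\mathrm{span}\,\mathcal{V}_j = \mathbb{C}^d$, and since both Barabanov norms are characterised by their interaction with vectors reachable from $\mathcal{V}_j$ under the cocycle, the equality $\vvv v\vvv_1 = c\vvv v\vvv_2$ on $\mathcal{V}_j$ extends to the proportionality $\vvv\cdot\vvv_1 = c\vvv\cdot\vvv_2$ on all of $\mathbb{C}^d$ by standard homogenisation arguments using the Barabanov functional equation $\max_i \vvv A_iv\vvv_j = \vvv v\vvv_j$.

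The principal obstacle will be the bridging step: converting the purely combinatorial unbounded agreements property into the quantitative statement that image directions along two maximising sequences coalesce projectively. This requires carefully matching the geometric rate of the singular-value-gap decay against the length of the agreed blocks, and showing that the resulting convergence is compatible with the Barabanov functional equation for both candidate norms simultaneously. The preceding and subsequent steps (Mather-set rigidity, rank-one limits, span by irreducibility, and propagation from $\mathcal{V}_j$ to $\mathbb{C}^d$) are largely structural once Theorem \ref{struc} and the dominated-splitting setup are in place.
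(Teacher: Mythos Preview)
This theorem is quoted from \cite{MBara} and is not proved in the present paper; it is invoked here only as a black box in \S\ref{appbara}. There is therefore no proof in this article against which to compare your attempt.

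On the merits of your sketch: the overall architecture---rank-one limits from the $\wedge^2$ singular-value gap, coalescence of image directions via unbounded agreements, and propagation via the Barabanov equation---is the right shape, but two steps are mis-specified. First, your propagation step asserts that proportionality of the two norms on a spanning set $\mathcal{V}_j$ extends to all of $\mathbb{C}^d$; this is false for norms in general, and invoking the Barabanov equation does not fix it in the manner you describe. What actually works is an \emph{inequality} argument: given arbitrary $v\neq 0$, iterate the Barabanov property for $\vvv\cdot\vvv_1$ to produce a sequence $x$ with $\vvv\mathcal{L}_{\mathsf{A}}(x,n)v\vvv_1=\vvv v\vvv_1$, extract a rank-one limit $P$ with $\vvv Pv\vvv_1=\vvv v\vvv_1$, and then use only that $\vvv\cdot\vvv_2$ is \emph{extremal} to get $\vvv Pv\vvv_2\leq\vvv v\vvv_2$. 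This yields $\varphi(v)\leq\varphi(v_P)$ for the ratio $\varphi:=\vvv\cdot\vvv_1/\vvv\cdot\vvv_2$, and symmetry then forces $\varphi$ to be constant once you know all image directions $v_P$ share a common $\varphi$-value. Second, and relatedly, restricting to $x\in Z_{\mathsf{A}}$ is too narrow for this argument: the sequence produced from an arbitrary $v$ by iterating the Barabanov property is strongly extremal but need not lie in the Mather set, so the coalescence step must be carried out for \emph{all} sequences with $\limsup_n\varrho(\mathsf{A})^{-n}\|\mathcal{L}_{\mathsf{A}}(x,n)\|>0$---which is exactly the class appearing in the definition of the unbounded agreements property. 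The Mather-set formalism of Theorem~\ref{struc} is therefore not the natural vehicle here; the argument in \cite{MBara} proceeds directly with such sequences.
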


The unbounded agreements property admits the following description in terms of the Mather set $Z_{\mathsf{A}}$:
\begin{proposition}\label{UAP}
Let $\mathsf{A}= \{A_1,\ldots,A_m\} \subset \Mat_d(\mathbb{C})$. Then $\mathsf{A}$ has the unbounded agreements property if and only if $\mathsf{Z}_{\mathsf{A}} \subseteq \Sigma_m$ contains a unique minimal set.
\end{proposition}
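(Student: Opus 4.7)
The plan is to recast UAP as a purely topological statement about $\omega$-limit sets in the one-sided shift $(\Sigma_m,\sigma)$, and then translate between maximising measures, the Mather set and minimal sets contained in it via Theorem \ref{struc}(ii) and (iv). Write
\[
X:=\left\{j\in\Sigma_m\colon \limsup_{n\to\infty}\varrho(\mathsf{A})^{-n}\left\|\mathcal{L}_{\mathsf{A}}(j,n)\right\|>0\right\},
\]
so UAP is precisely the assertion that for every $j_1,j_2\in X$ and every $k\ge 1$ there exist $\ell_1,\ell_2\ge 1$ with $d(\sigma^{\ell_1}j_1,\sigma^{\ell_2}j_2)<2^{-k}$. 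A short diagonal argument using the compactness of $\Sigma_m$ shows that UAP holds if and only if $\omega(j_1)\cap\omega(j_2)\neq\emptyset$ for every pair $j_1,j_2\in X$; in the degenerate case where the UAP-supplied indices $\ell_i$ remain bounded one exploits the discrete nature of the metric to conclude that $j_1$ and $j_2$ have identical tails, so $\omega(j_1)=\omega(j_2)$.

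For the forward direction, suppose $Z_{\mathsf{A}}$ contains two distinct minimal sets $M_1,M_2$. Distinct minimal sets are automatically disjoint, so $M_1\cap M_2=\emptyset$. Pick $x_i\in M_i$; by Theorem \ref{struc}(iv) each $x_i$ is strongly extremal and therefore belongs to $X$. Minimality of $M_i$ forces $\omega(x_i)=M_i$, so $\omega(x_1)\cap\omega(x_2)=\emptyset$, contradicting the topological reformulation of UAP.

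For the reverse direction, assume $Z_{\mathsf{A}}$ contains a unique minimal set $M$. Given $j_1,j_2\in X$, it suffices to show $M\subseteq\omega(j_i)$ for $i=1,2$. Fix $j\in X$ and choose a subsequence $(n_k)$ along which $\varrho(\mathsf{A})^{-n_k}\|\mathcal{L}_{\mathsf{A}}(j,n_k)\|$ is bounded below by a positive constant. Form the empirical measures $\mu_k:=\tfrac{1}{n_k}\sum_{r=0}^{n_k-1}\delta_{\sigma^r j}$ and, using weak-$*$ compactness of $\mathcal{M}$, pass to a weak-$*$ accumulation point $\mu$. An appeal to Lemma \ref{wkx} along the chosen subsequence (mirroring its use in the proof of Theorem \ref{struc}(v)) yields $\mu\in\mathcal{M}_\sigma\cap\mathcal{M}_{\max}(\mathsf{A})$. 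Theorem \ref{struc}(ii) then gives $\supp\mu\subseteq Z_{\mathsf{A}}$, while $\supp\mu$ sits inside the forward-orbit closure of $j$ by the Portmanteau theorem. Any minimal subset $N$ of $\supp\mu$ is therefore a minimal subset of $Z_{\mathsf{A}}$, so by uniqueness $N=M$; and a standard argument (if $N$ is disjoint from the orbit of $j$ this is immediate, while if $\sigma^ij\in N$ for some $i$ then density of the orbit of $\sigma^ij$ in $N$ forces $N\subseteq\omega(j)$) shows $N\subseteq\omega(j)$. Hence $M\subseteq\omega(j)$, as required.

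The principal obstacle is the appeal to Lemma \ref{wkx} in the reverse direction, because the UAP hypothesis $\limsup_n\varrho(\mathsf{A})^{-n}\|\mathcal{L}_{\mathsf{A}}(j,n)\|>0$ is weaker than weak extremality in the sense of \S2: it forces $\tfrac{1}{n_k}\log\|\mathcal{L}_{\mathsf{A}}(j,n_k)\|\to\log\varrho(\mathsf{A})$ only along the chosen subsequence, not for the full sequence. One must verify that the Peres-style subadditive estimate underlying Lemma \ref{wkx} still produces a maximising measure in this purely subsequential setting; everything else is a routine combination of the topological reformulation of UAP with Theorem \ref{struc}(ii) and (iv) and the elementary structure of minimal subsets of one-sided orbit closures.
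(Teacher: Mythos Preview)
Your contrapositive direction (two disjoint minimal sets in $Z_{\mathsf{A}}$ yield points violating UAP) is correct and matches the paper's argument essentially verbatim, via Theorem~\ref{struc}(iv).

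The gap you flag in the other direction is real: Lemma~\ref{wkx} is stated and proved under the hypothesis that the \emph{full} limit $\lim_{n\to\infty}\tfrac{1}{n}f_n(j)$ exists, and the boundary-term estimates in its proof use that hypothesis. Your point $j$ only satisfies $\tfrac{1}{n_k}f_{n_k}(j)\to\log\varrho(\mathsf{A})$ along a subsequence, so the lemma does not apply as written. The paper avoids this problem entirely by \emph{not} working at the level of the single point $j$. Instead it passes to the forward orbit closure $Y=\overline{\{\sigma^\ell j:\ell\ge0\}}$, observes that
\[
\limsup_{n\to\infty}\sup_{y\in Y}\tfrac{1}{n}\log\|\mathcal{L}_{\mathsf{A}}(y,n)\|\ \ge\ \limsup_{n\to\infty}\tfrac{1}{n}\log\|\mathcal{L}_{\mathsf{A}}(j,n)\|\ =\ \log\varrho(\mathsf{A}),
\]
and then uses Fekete's lemma (Lemma~\ref{Fuckite}) on the subadditive sequence $n\mapsto\sup_{y\in Y}\log\|\mathcal{L}_{\mathsf{A}}(y,n)\|$ to upgrade this limsup to a genuine limit equal to $\log\varrho(\mathsf{A})$. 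Theorem~\ref{StSt}, applied to the subsystem $(Y,\sigma|_Y)$, then produces an invariant measure $\mu$ supported in $Y$ with $\inf_m\tfrac{1}{m}\int\log\|\mathcal{L}_{\mathsf{A}}\|\,d\mu=\log\varrho(\mathsf{A})$, so $\mu\in\mathcal{M}_{\max}(\mathsf{A})$ and $\supp\mu\subseteq Y\cap Z_{\mathsf{A}}$. From there your minimal-set argument goes through unchanged. The point is that subadditivity gives you a free full limit at the level of $\sup_{y\in Y}$, which is exactly what Lemma~\ref{wkx} would need but does not have for the individual orbit.

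Your route can in fact be repaired directly in this particular setting: since the alphabet is finite and $j\in X$ forces every subword product along $j$ to be nonzero, the function $i\mapsto\log\|\mathcal{L}_{\mathsf{A}}(\sigma^i j,m)\|$ takes only finitely many finite values for each fixed $m$, so the boundary terms in the proof of Lemma~\ref{wkx} are uniformly bounded and the subsequential version goes through. But this is specific to finite $\mathcal{I}$, and the paper's route via Theorem~\ref{StSt} on the orbit closure is both cleaner and independent of that special structure.
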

\begin{proof}
Let us suppose that $Z_{\mathsf{A}}$ contains a unique minimal set, and choose sequences $x^{(1)}=(x_j^{(1)})$ and $x^{(2)}=(x_j^{(2)}) \in \Sigma_m$ such that $\limsup_{n\to\infty} \left\|\mathcal{L}_{\mathsf{A}}\left(x^{(i)},n\right)\right\|\varrho(\mathsf{A})^{-n}>0$ for $i=1,2$. Let us write $Y_i:=\overline{\{\sigma^\ell x^{(i)} \colon \ell \geq 0\}}$ for each $i$. Clearly,
\[\limsup_{n\to \infty} \sup_{y \in Y_i} \frac{1}{n}\log\|\mathcal{L}_{\mathsf{A}}(y,n)\| \geq \limsup_{n \to\infty}\frac{1}{n}\log\left\|\mathcal{L}_{\mathsf{A}}\left(x^{(i)},n\right)\right\| = \log\varrho(\mathsf{A})\]
for $i=1,2$, and consequently
\[\lim_{n \to \infty} \sup_{y \in Y_i} \frac{1}{n}\log\|\mathcal{L}_{\mathsf{A}}(y,n)\| = \log\varrho(\mathsf{A})\]
since this limit exists by Lemma \ref{Fuckite}, and its value can be at most $\log\varrho(\mathsf{A})$ by the definition of $\varrho(\mathsf{A})$. Applying Theorem \ref{StSt} with $X:=Y_i$ and $f_n(x):=\log\|\mathcal{L}_{\mathsf{A}}(x,n)\|$ we deduce that for each $i=1,2$ there exists a $\sigma$-invariant Borel probability measure $\mu_i$ on $Y_i$ such that $\inf_{m \geq 1}\frac{1}{m}\int\log\|\mathcal{L}_{\mathsf{A}}(x,m)\|\,d\mu_i(x) = \log\varrho(\mathsf{A})$. In particular each $\mu_i$ belongs to $\mathcal{M}_{\max}(\mathsf{A})$, and it follows that for each $i$ the set $\supp \mu_i \subseteq Y_i$ is a subset of $Z_{\mathsf{A}}$. It follows from a classic theorem of Birkhoff (\cite[p.130]{KH}) that for each $i$ the compact $\sigma$-invariant set $\supp \mu_i$ contains a minimal set. Since by hypothesis there is exactly one minimal set contained in $Z_{\mathsf{A}}$, we conclude that $\supp \mu_1 \cap \supp \mu_2$ is nonempty. Choose any point $z \in \supp \mu_1 \cap \supp \mu_2 \subseteq Y_1 \cap Y_2$, and let $k \geq 1$ be any integer. By definition every element of $Y_1$ belongs to the closure of $\left\{\sigma^\ell x^{(1)} \colon \ell \geq 0\right\}$, and therefore there exists $\ell_1 \geq 1$ such that $d\left(\sigma^{\ell_1}x^{(1)},z\right)<2^{-k}$, implying that  $x^{(1)}_{\ell_1+j}=z_j$ for $j=1,\ldots,k$. Equally there must exist $\ell_2 \geq 1$ such that $d\left(\sigma^{\ell_2}x^{(2)},z\right)<2^{-k}$, and it follows that $x^{(2)}_{\ell_2+j}=z_j=x^{(1)}_{\ell_1+j}$ for $j=1,\ldots,k$. Since $k$ is arbitrary we conclude that $\mathsf{A}$ has the unbounded agreements property as required.

Let us now prove the converse direction. By the aforementioned theorem of Birkhoff, $Z_{\mathsf{A}}$ contains at least one minimal set. Let us suppose that $Z_{\mathsf{A}}$ contains two distinct minimal sets, $Z_1$ and $Z_2$. If $Z_1 \cap Z_2$ were nonempty it would be a $\sigma$-invariant closed proper subset of either $Z_1$ or $Z_2$, contradicting minimality, and it follows that $Z_1$ and $Z_2$ must be pairwise disjoint. Choose any $x^{(1)} \in Z_1$ and $x^{(2)}\in Z_2$. By Theorem \ref{struc} each $x^{(i)}$ is strongly extremal and hence satisfies $\limsup_{n\to\infty} \left\|\mathcal{L}_{\mathsf{A}}(x^{(i)},n)\right\|\varrho(\mathsf{A})^{-n}>0$. Since $Z_1$ and $Z_2$ are compact and do not intersect, we may choose $k>0$ such that $d(y,z) >2^{-k}$ whenever $y \in Z_1$ and $z \in  Z_2$. In particular, since every point of the form $\sigma^j x^{(i)}$ belongs to $Z_i$, we have $d\left(\sigma^{\ell_1}x^{(1)},\sigma^{\ell_2}x^{(2)}\right)>2^{-k}$ for all $\ell_1,\ell_2 \geq 1$. It follows that for every $\ell_1,\ell_2 \geq 1$ we must have $x_{\ell_1+j}^{(1)} \neq x_{\ell_2+j}^{(2)}$ for some $j$ in the range $1,\ldots,k$, and as such $\mathsf{A}$ does not have the unbounded agreements property. The proof is complete.
\end{proof}
We immediately deduce the following:
\begin{corollary}
Let $\mathsf{A}= \{A_1,\ldots,A_m\}\subset \Mat_d(\mathbb{C})$ and suppose that $\mathcal{M}_{\max}(\mathsf{A})$ contains exactly one measure. Then $\mathsf{A}$ has the unbounded agreements property. 
\end{corollary}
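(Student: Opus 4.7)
The plan is to reduce the statement to Proposition~\ref{UAP}: it suffices to show that under the uniqueness hypothesis, the Mather set $Z_{\mathsf{A}}$ contains exactly one minimal set in the sense of topological dynamics. Let $\hat\mu$ denote the unique element of $\mathcal{M}_{\max}(\mathsf{A})$; then by the definition of $Z_{\mathsf{A}}$ we have $Z_{\mathsf{A}} = \supp \hat\mu$, so $Z_{\mathsf{A}}$ is a nonempty compact $\sigma$-invariant subset of $\Sigma_m$. By the standard theorem of Birkhoff cited in the proof of Proposition~\ref{UAP} it therefore contains at least one minimal set, and the task is to show that it contains no more than one.

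Suppose for a contradiction that $Z_{\mathsf{A}}$ contains two distinct minimal sets $Z_1$ and $Z_2$. These must be disjoint, because $Z_1 \cap Z_2$ would otherwise be a nonempty closed proper $\sigma$-invariant subset of both, contradicting minimality. Applying the Krylov--Bogolyubov theorem to the restrictions of $\sigma$ to the compact invariant sets $Z_1$ and $Z_2$ produces $\sigma$-invariant Borel probability measures $\mu_1, \mu_2$ on $\Sigma_m$ with $\mu_i(Z_i) = 1$ for $i=1,2$. Since $Z_1 \cap Z_2 = \emptyset$ the measures $\mu_1, \mu_2$ are distinct. On the other hand, $\mu_i(Z_{\mathsf{A}}) \geq \mu_i(Z_i) = 1$ for each $i$, so by part~(ii) of Theorem~\ref{struc} both $\mu_1$ and $\mu_2$ belong to $\mathcal{M}_{\max}(\mathsf{A})$. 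This contradicts the hypothesis that $\mathcal{M}_{\max}(\mathsf{A})$ contains exactly one measure, and hence $Z_{\mathsf{A}}$ has a unique minimal set as required.

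I do not anticipate any substantial obstacle here: the argument is a short combination of Proposition~\ref{UAP} with the characterisation of maximising measures supplied by part~(ii) of Theorem~\ref{struc} and the standard fact that every nonempty compact invariant set carries an invariant probability measure. The only point that requires momentary care is the disjointness of two distinct minimal sets, which follows at once from the definition of minimality.
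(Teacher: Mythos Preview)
Your proof is correct and follows essentially the same route as the paper's: both argue that two distinct minimal sets in $Z_{\mathsf{A}}$ would yield, via Krylov--Bogolyubov and part~(ii) of Theorem~\ref{struc}, two distinct maximising measures, contradicting uniqueness. The only cosmetic difference is that you frame it as proving uniqueness of the minimal set and then invoking Proposition~\ref{UAP}, whereas the paper phrases it contrapositively by starting from the failure of the unbounded agreements property.
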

\begin{proof}
If $\mathsf{A}$ does not have the unbounded agreements property, then by Proposition \ref{UAP} there exist two distinct minimal sets $Y_1, Y_2 \subseteq Z_{\mathsf{A}}$.  By the Krylov-Bogolioubov theorem there exist $\sigma$-invariant Borel probability measures $\mu_1$ and $\mu_2$ supported in $Y_1$ and $Y_2$ respectively. Since two distinct minimal sets cannot intersect one another we have $\supp \mu_1 \cap \supp \mu_2 = \emptyset$ and so in particular $\mu_1 \neq \mu_2$. However, by Theorem \ref{struc} we have $\mu_1,\mu_2 \in \mathcal{M}_{\max}(\mathsf{A})$ since $\mu_i(Z_\mathsf{A})=1$ for $i=1,2$, and since $\mu_1 \neq \mu_2$ this contradicts our hypothesis.
\end{proof}
In \cite{MBara} we proved uniqueness of the Barabanov norm for certain sets of matrices constructed specifically for that purpose. Using Proposition \ref{UAP} we are able to give the following more natural family of examples:
\begin{theorem}
For each $\alpha >0$ define $\mathsf{A}_\alpha=\{A_1,\alpha A_2\}$ where \[A_1:=\left(\begin{array}{cc}1&1\\0&1\end{array}\right),\qquad A_2:=\left(\begin{array}{cc}1&0\\1&1\end{array}\right).\]
Then for each $\alpha \in (0,1]$ the pair $\mathsf{A}_\alpha$ has a unique Barabanov norm.
\end{theorem}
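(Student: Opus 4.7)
The plan is to apply Theorem \ref{UBN} to $\mathsf{A}_\alpha$, which reduces the uniqueness claim to verifying three hypotheses: irreducibility of $\mathsf{A}_\alpha$, the exterior-power inequality $\varrho(\wedge^2\mathsf{A}_\alpha) < \varrho(\mathsf{A}_\alpha)^2$, and the unbounded agreements property. The first two are essentially immediate; the third is the substantive step, and I would route it through Proposition \ref{UAP}.

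For irreducibility, the only $A_1$-invariant line is $\mathbb{C}\cdot(1,0)^{T}$ and the only $(\alpha A_2)$-invariant line is $\mathbb{C}\cdot(0,1)^{T}$, so no common proper invariant subspace exists. For the wedge estimate, since the matrices are $2\times 2$ we have $\wedge^2 A=\det A$, and thus $\varrho(\wedge^2\mathsf{A}_\alpha)=\max\{1,\alpha^2\}=1$ for $\alpha\in(0,1]$. To secure $\varrho(\mathsf{A}_\alpha)>1$ for every such $\alpha$, I would compute $A_1^k\cdot\alpha A_2=\alpha\bigl(\begin{smallmatrix}1+k&k\\1&1\end{smallmatrix}\bigr)$, observe that it has determinant $\alpha^2$ and trace $\alpha(k+2)$, and conclude that its spectral radius exceeds $1$ as soon as $k>1/\alpha-2$, whence $\varrho(\mathsf{A}_\alpha)^{k+1}\geq\rho(A_1^k\alpha A_2)>1$ and $\varrho(\mathsf{A}_\alpha)^2>1=\varrho(\wedge^2\mathsf{A}_\alpha)$.

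The hard part is the unbounded agreements property, which by Proposition \ref{UAP} amounts to showing that $Z_{\mathsf{A}_\alpha}$ contains a unique minimal set. Here I would invoke the result of \cite{HMST} cited in the remark following Theorem \ref{struc}: that paper produces, for each $\alpha\in[0,1]$, a Sturmian subshift $X_{\mathfrak{r}(\alpha)}\subset\Sigma_2$ of rotation number $\mathfrak{r}(\alpha)$ satisfying properties (iii)-(v) of Theorem \ref{struc}. Given any ergodic $\mu\in\mathcal{M}_{\max}(\mathsf{A}_\alpha)$, the subadditive ergodic theorem makes $\mu$-almost every point weakly extremal, and because $X_{\mathfrak{r}(\alpha)}$ satisfies property (v), applying Birkhoff's pointwise ergodic theorem to the continuous function $g(y):=\mathrm{dist}(y,X_{\mathfrak{r}(\alpha)})$ forces $\int g\,d\mu=0$, so that $\mu$ is supported in $X_{\mathfrak{r}(\alpha)}$; this extends to every $\mu\in\mathcal{M}_{\max}(\mathsf{A}_\alpha)$ by ergodic decomposition, and hence $Z_{\mathsf{A}_\alpha}\subseteq X_{\mathfrak{r}(\alpha)}$. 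Sturmian subshifts are always minimal (classical in the irrational case, and trivial in the rational case since they collapse to a single periodic orbit), so because by Theorem \ref{struc}(i) $Z_{\mathsf{A}_\alpha}$ is a nonempty closed $\sigma$-invariant subset of $X_{\mathfrak{r}(\alpha)}$, we must have $Z_{\mathsf{A}_\alpha}=X_{\mathfrak{r}(\alpha)}$, making $Z_{\mathsf{A}_\alpha}$ its own unique minimal subset. Theorem \ref{UBN} then yields the uniqueness of the Barabanov norm.
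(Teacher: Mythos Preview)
Your proposal is correct and follows the same overall strategy as the paper: verify the three hypotheses of Theorem~\ref{UBN}, reducing the unbounded agreements property to uniqueness of the minimal set in $Z_{\mathsf{A}_\alpha}$ via Proposition~\ref{UAP}, and invoking \cite[Theorem~2.3]{HMST} for the structure of extremal orbits. The differences are minor. For the inequality $\varrho(\mathsf{A}_\alpha)>1$ the paper simply cites \cite[Lemma~3.4]{HMST}, whereas you give a direct spectral-radius computation; note however that your threshold $k>1/\alpha-2$ is not quite right (the characteristic polynomial of $A_1^k\alpha A_2$ at $\lambda=1$ is $1-\alpha(k+2)+\alpha^2$, so one needs $k>1/\alpha+\alpha-2$), though this does not affect the conclusion since any sufficiently large $k$ works. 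For the containment $Z_{\mathsf{A}_\alpha}\subseteq X_{\mathfrak{r}(\alpha)}$ the paper argues more directly: any minimal $Y\subseteq Z_{\mathsf{A}_\alpha}$ consists of points that are recurrent (by minimality) and strongly extremal (by Theorem~\ref{struc}(iv)), and \cite[Theorem~2.3]{HMST} states that such points lie in $X_{\mathfrak{r}(\alpha)}$, forcing $Y=X_{\mathfrak{r}(\alpha)}$ by minimality. Your route through property~(v), Birkhoff's theorem, and ergodic decomposition reaches the same conclusion but is a little more circuitous; both arguments ultimately rely on the minimality of $X_{\mathfrak{r}(\alpha)}$ established in \cite{HMST}.
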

\begin{proof}
By computing the eigenspaces of each matrix one may establish that each of the pairs $\mathsf{A}_\alpha$ is irreducible, and hence in particular has at least one Barabanov norm. It was shown in \cite[Theorem 2.3]{HMST} that for each $\alpha \in (0,1]$ there exists a unique minimal set $X_{\mathfrak{r}(\alpha)}\subset \Sigma_2$ such that if $x \in \Sigma_2$ is recurrent and strongly extremal for $\mathsf{A}_\alpha$, then $x \in X_{\mathfrak{r}(\alpha)}$. Since $Z_{\mathsf{A}_\alpha}$ is closed and $\sigma$-invariant, it contains at least one minimal set. If $Y \subseteq Z_{\mathsf{A}_\alpha}$ is minimal and $x \in Y$ then $x$ is recurrent by minimality and strongly extremal by Theorem \ref{struc}, and it follows that $x \in X_{\mathfrak{r}(\alpha)}$. In particular $Y \cap X_{\mathfrak{r}(\alpha)} \neq \emptyset$ and therefore $Y=X_{\mathfrak{r}(\alpha)}$ by minimality. We conclude that $Z_{\mathsf{A}_\alpha}$ contains a unique minimal set, and by Proposition \ref{UAP} it follows that $\mathsf{A}_\alpha$ has the unbounded agreements property. 

To prove the proposition it remains only to show that the joint spectral radius of the set $\wedge^2 \mathsf{A}_\alpha:=\{\wedge^2 A_1, \wedge^2 (\alpha A_2)\}$ is strictly less than $\varrho(\mathsf{A}_\alpha)^2$. However, since the matrices $A_1,A_2$ are two-dimensional, $\wedge^2 \mathsf{A}_\alpha$ simply consists of the two one-dimensional matrices with entries respectively equal to $1=\det A_1$ and $\alpha = \det (\alpha A_2)$. It follows that $\varrho(\wedge^2 \mathsf{A}_\alpha)=1$. On the other hand by \cite[Lemma 3.4]{HMST} we have $\varrho(\mathsf{A}_\alpha)>1$ for each $\alpha \in (0,1]$. We conclude that the conditions of Theorem \ref{UBN} are met, and for each $\alpha \in (0,1]$ the pair $\mathsf{A}_\alpha$ admits a unique Barabanov norm.
\end{proof}
\emph{Remark}. It was shown in \cite{BTV,HMST} that for certain choices of $\alpha \in (0,1]$ the pair $\mathsf{A}_\alpha$ does not have the finiteness property. The above proposition therefore implies the existence of a pair of matrices which admits a unique Barabanov norm but does not have the finiteness property. This result was previously claimed without proof in the article \cite{MBara}.

\appendix
\section{Subadditive ergodic optimisation}
Throughout this section, we let $T \colon X \to X$ be a continuous transformation of a compact metric space. We denote by $\mathcal{M}$ the set of all Borel probability measures on $X$. We equip this set with the weak-* topology, which is the smallest topology such that $\mu \mapsto \int g\,d\mu$ is a continuous map from $\mathcal{M}$ to $\mathbb{R}$ for every $g \in C(X)$. Under the weak-* topology $\mathcal{M}$ is a compact metrisable space (see e.g. \cite{Bil,Parth}). We let $\mathcal{M}_T\subseteq \mathcal{M}$ denote the set of all measures which are invariant with respect to $T$, and we define $\mathcal{E}_T\subseteq \mathcal{M}_T$ to be the set of all invariant measures with respect to which $T$ is ergodic. 

\emph{Ergodic optimisation} is concerned with the following general problem: given a continuous function $f \colon X \to \mathbb{R}$, what can we say about the maximum ergodic average $\beta(f):=\max_{\mu \in \mathcal{M}_T} \int f\,d\mu$, the set of measures which attain this average, and the set of points $x \in X$ for which $\lim_{n\to \infty} \frac{1}{n}\sum_{i=0}^{n-1} f(T^ix) = \beta(f)$? An overview of this research area may be found in \cite{J}; some articles in this area of particular note include \cite{B1,BJX,CLT,YH}. In this appendix we are concerned with the more general problem of identifying points $x \in X$ and measures which maximise the growth of \emph{subadditive} ergodic averages, which we describe in detail below. The theorems proved in this section mainly consist of extensions of results already present in the literature, and as such are not of the first order of originality; on the other hand, these results are sufficiently far removed from their antecedents in the literature that we feel it would be unreasonable to ask the reader to accept them without proof. For this reason we have decided to present them as an appendix separate from the main body of the article.  

Recall that a sequence $(a_n)$ of elements of $\mathbb{R}\cup\{-\infty\}$ is called \emph{subadditive} if one has $a_{n+m} \leq a_n + a_m$ for all $n,m \geq 1$. The following classical result due to Fekete \cite{Fekete} is used numerous times throughout this article. We include the proof in view of its brevity.
\begin{lemma}\label{Fuckite}
Let $(a_n)$ be a subadditive sequence of elements of $\mathbb{R}\cup\{-\infty\}$. Then $\lim_{n \to \infty} a_n/n = \inf_{n \geq 1} a_n/n \in \mathbb{R} \cup \{-\infty\}$.
\end{lemma}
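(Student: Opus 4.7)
The plan is to follow the classical Euclidean-division argument of Fekete, handling the extended real-valued setting as a small variant. Set $L := \inf_{n \geq 1} a_n/n \in \mathbb{R}\cup\{-\infty\}$. Trivially $\liminf_{n\to\infty} a_n/n \geq L$, so the whole content of the lemma is the reverse inequality $\limsup_{n\to\infty} a_n/n \leq L$.

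First I would dispose of the degenerate case. If $a_m = -\infty$ for some $m$, then for every $k \geq 1$ subadditivity gives $a_{m+k} \leq a_m + a_k = -\infty$, so $a_n = -\infty$ for all $n > m$ and $a_n/n \to -\infty = L$. So I may assume all $a_n$ are finite. Next I fix any $m \geq 1$ and for each $n > m$ write $n = q_n m + r_n$ with $q_n \geq 1$ and $0 \leq r_n < m$. Iterating subadditivity gives $a_{q_n m} \leq q_n a_m$, and then $a_n = a_{q_n m + r_n} \leq q_n a_m + a_{r_n}$ whenever $r_n \geq 1$; in the case $r_n = 0$ simply $a_n \leq q_n a_m$. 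Dividing by $n$,
\[
\frac{a_n}{n} \leq \frac{q_n m}{n}\cdot\frac{a_m}{m} + \frac{a_{r_n}}{n},
\]
with the second term interpreted as $0$ when $r_n=0$. Since $m$ is fixed and $r_n$ ranges over the finite set $\{0,1,\ldots,m-1\}$, the values $a_{r_n}$ are bounded, so $a_{r_n}/n \to 0$; and $q_n m / n \to 1$ as $n \to \infty$. Hence $\limsup_{n\to\infty} a_n/n \leq a_m/m$.

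The argument above holds for every $m$, so taking the infimum over $m$ yields $\limsup_{n\to\infty} a_n/n \leq L$ when $L$ is finite. If $L = -\infty$, the same inequality applied for arbitrarily large choices of $-a_m/m$ gives $\limsup_{n\to\infty} a_n/n = -\infty$. In either case, combined with $\liminf_{n\to\infty} a_n/n \geq L$ we conclude $\lim_{n\to\infty} a_n/n = L$, as required.

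No step presents a real obstacle; the only minor care needed is the bookkeeping with $-\infty$ values and the separate treatment of the remainder $r_n = 0$ so as not to invoke a nonexistent quantity $a_0$.
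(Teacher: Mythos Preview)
Your proof is correct and follows essentially the same Euclidean-division argument as the paper's: fix $m$, write $n = q_n m + r_n$, use subadditivity to bound $a_n$ by $q_n a_m$ plus a remainder that is $o(n)$, and conclude $\limsup a_n/n \leq a_m/m$. The only difference is cosmetic: the paper phrases the conclusion via an auxiliary $\lambda > \inf_n a_n/n$ and bounds the remainder by $r_n a_1$, whereas you bound it by $\max_{0\leq r<m}|a_r|$ and are more explicit about the $-\infty$ and $r_n=0$ edge cases.
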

\begin{proof}
Given a real number $\lambda>\inf_{n\geq 1} a_n/n$, choose an integer $m$ such that $a_m \leq m\lambda$. For each $n \geq 1$ write $n=q_nm+r_n$ where $q_n:=\lfloor n/m\rfloor$ and $0 \leq r_n <m$. We have $a_n \leq q_na_m + a_{r_n} \leq n\lambda + r_na_1$ for all $n \geq 1$, and it follows that $\limsup_{n \to \infty} a_n/n \leq \lambda$. Since $\lambda$ is arbitrary we conclude that $\limsup_{n \to \infty} a_n/n \leq \inf_{n \geq 1}a_n/n$, and the result follows.
\end{proof} 
A sequence $(f_n)$ of functions from $X$ to $\mathbb{R}\cup\{-\infty\}$ will be called \emph{subadditive} if for each $x \in X$ and $n, m \geq 1$ the inequality $f_{n+m}(x) \leq f_n(T^mx)+f_m(x)$ is satisfied. A starting point for our study of subadditive sequences of functions is the following classical result: 
\begin{theorem}[(Subadditive ergodic theorem)]\label{SAET}
Let $(X,\mathcal{F},\mu)$ be a probability space equipped with an ergodic measure-preserving transformation $T$. Suppose that $(f_n)$ is a sequence of measurable functions from $X$ to $\mathbb{R}\cup\{-\infty\}$ such that $\max\{f_n,0\}$ is integrable for each $n \geq 1$, and such that for each $n, m \geq 1$ one has $f_{n+m}(x) \leq f_n(T^mx)+f_m(x)$ for $\mu$-a.e. $x$. Then $\lim_{n \to \infty}f_n(x)/n = \inf_{n \geq 1}(1/n)\int f_n\,d\mu$ for $\mu$-a.e. $x$.
\end{theorem}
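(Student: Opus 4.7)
The plan is to reduce to the classical Kingman subadditive ergodic theorem (which assumes $f_1 \in L^1$) by handling the possibility that the limiting average is $-\infty$ via truncation. Set $\lambda := \inf_{n \geq 1}(1/n)\int f_n\,d\mu \in [-\infty, +\infty)$, which is well-defined because the hypothesis that $\max\{f_n,0\}$ is integrable means each $\int f_n\,d\mu$ makes sense in $[-\infty, +\infty)$. The sequence $a_n := \int f_n\,d\mu$ is subadditive, since $T$-invariance of $\mu$ gives $a_{n+m} \leq \int (f_n \circ T^m)\,d\mu + a_m = a_n + a_m$, so Lemma \ref{Fuckite} identifies $\lambda$ with $\lim_{n\to\infty}(1/n)a_n$.

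If $\lambda > -\infty$ then $\int f_1\,d\mu \geq \lambda$ is finite, so $f_1$ is $\mu$-integrable and the conclusion is precisely the classical form of Kingman's subadditive ergodic theorem; I would simply cite a standard reference (e.g.\ Steele's elementary proof, or the treatment in \cite{W}).

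The substantive case is $\lambda = -\infty$. For each integer $K \geq 1$ define $g_n^{(K)}(x) := \max\{f_n(x),-nK\}$. After discarding the countable union of the $\mu$-null sets on which subadditivity fails for some pair $(n,m)$, the elementary inequality $\max\{a+b,c+d\}\leq\max\{a,c\}+\max\{b,d\}$ yields
\[
g_{n+m}^{(K)}(x) \leq \max\{f_n(T^mx)+f_m(x),\,-nK-mK\} \leq g_n^{(K)}(T^mx)+g_m^{(K)}(x).
\]
The truncated function $g_n^{(K)}$ is bounded below by $-nK$ and has the same integrable positive part as $f_n$, hence is $L^1$. Classical Kingman then gives $g_n^{(K)}(x)/n \to \lambda_K := \inf_{n\geq 1}(1/n)\int g_n^{(K)}\,d\mu$ for $\mu$-almost every $x$.

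To finish, I would show $\lambda_K \to -\infty$ as $K \to \infty$. For each fixed $n$ the negative parts $(g_n^{(K)})^{-} = \min\{f_n^{-}, nK\}$ increase to $f_n^{-}$ as $K \to \infty$, so monotone convergence gives $\int g_n^{(K)}\,d\mu \downarrow \int f_n\,d\mu$, hence $\limsup_K \lambda_K \leq (1/n)\int f_n\,d\mu$ for every $n$; taking the infimum over $n$ forces $\limsup_K \lambda_K \leq \lambda = -\infty$. Since $f_n \leq g_n^{(K)}$, we obtain $\limsup_{n \to \infty} f_n(x)/n \leq \lambda_K$ off a $\mu$-null set $E_K$, and on the complement of $\bigcup_{K \in \mathbb{N}} E_K$ we conclude $f_n(x)/n \to -\infty = \lambda$. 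The main obstacle, if this is not to be merely a repackaging of Kingman's proof, is the bookkeeping for $\mu$-null exceptional sets: the subadditivity hypothesis only holds off a null set depending on $(n,m)$, and the truncation argument requires combining the single null set obtained from that countable union with the null sets appearing in each of the countably many applications of classical Kingman to the sequences $(g_n^{(K)})$.
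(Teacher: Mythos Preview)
Your argument is correct. The paper does not give a proof of this theorem at all: it only appends a remark saying that the classical Kingman theorem (under the extra hypotheses $f_1\in L^1$ and $\lambda>-\infty$) may be cited, and that when those hypotheses fail one should show $\limsup_{n}f_n/n=-\infty$ a.e.\ ``by taking upper estimates using the Birkhoff ergodic theorem''. Your reduction follows the same outline; the only difference is in the $\lambda=-\infty$ case, where the paper's hint points toward truncating a fixed $f_m$ and bounding $f_n$ above by Birkhoff sums of the truncation, whereas you truncate the whole subadditive sequence at level $-nK$ (preserving subadditivity) and reapply Kingman. Both devices are standard and interchangeable, and your version has the virtue of being written out in full rather than left as a one-line hint.
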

\emph{Remark}. It is more usual to state this theorem under the additional hypotheses that each $f_n$ is integrable, and $\inf_{n \geq 1}(1/n)\int f_n\,d\mu > -\infty$. Some proofs of Theorem \ref{SAET} in this form may be found in e.g. \cite{Der,KW}. To deduce the version of Theorem \ref{SAET} given above, it suffices to show that if the additional hypotheses do not hold then necessarily $\limsup_{n \to \infty}f_n(x)/n = -\infty$ a.e, which may be achieved by taking upper estimates using the Birkhoff ergodic theorem.

Theorem \ref{SAET} indicates that given a subadditive sequence of upper semi-continuous functions $(f_n)$ and an invariant measure $\mu \in \mathcal{M}_T$, the quantity $\inf_{n \geq 1}\frac{1}{n}\int f_n\,d\mu$ may be understood as the ergodic average of $(f_n)$ with respect to $\mu$. This motivates the following definition: given a transformation $T\colon X \to X$ and a subadditive sequence of upper semi-continuous functions $(f_n)$ from $X$ to $\mathbb{R} \cup \{-\infty\}$, we define the \emph{maximum ergodic average} of the sequence $(f_n)$ to be the quantity
\[\beta[(f_n)]:= \sup_{\mu \in \mathcal{M}_T}\inf_{n \geq 1} \frac{1}{n}\int f_n\,d\mu.\]
We shall say that $\mu \in \mathcal{M}_T$ is a \emph{maximising measure} for $(f_n)$ if $\inf_{n \geq 1}\frac{1}{n}\int f_n\,d\mu = \beta[(f_n)]$, and denote the set of all maximising measures by $\mathcal{M}_{\max}[(f_n)]$.
The key objective of this section is to prove the following characterisation of $\beta[(f_n)]$:
\begin{theorem}\label{StSt}
Let $(f_n)$ be a subadditive sequence of upper semi-continuous functions taking values in $\mathbb{R} \cup \{-\infty\}$. Then,
\begin{align*}
\beta[(f_n)]&=\sup_{\mu \in \mathcal{E}_T} \inf_{n \geq 1}\frac{1}{n}\int f_n\,d\mu = \inf_{n \geq 1}\sup_{x \in X}\frac{1}{n}f_n(x) \\
&= \inf_{n \geq 1}\sup_{\mu \in \mathcal{M}_T}\frac{1}{n}\int f_n\,d\mu = \sup_{x \in X} \inf_{n \geq 1} \frac{1}{n}f_n(x).\end{align*}
In all but the last of these expressions, the infimum over all $n\geq 1$ may be replaced with the limit as $n \to \infty$ of the same quantity, without altering the value of the expression. Furthermore, every supremum arising in each of the above expressions is attained.
\end{theorem}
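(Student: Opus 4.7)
The plan is to abbreviate the five candidate quantities as $\alpha := \sup_{\mu\in\mathcal{M}_T}\inf_{n}\frac{1}{n}\int f_n\,d\mu = \beta[(f_n)]$, $\alpha_E$ for the analogous supremum restricted to $\mathcal{E}_T$, $\delta := \inf_{n}\sup_{\mu\in\mathcal{M}_T}\frac{1}{n}\int f_n\,d\mu$, $\gamma := \inf_{n}\frac{1}{n}\sup_{x}f_n(x)$, and $\epsilon := \sup_{x}\inf_{n}\frac{1}{n}f_n(x)$. First I would observe that the scalar sequences $n\mapsto\int f_n\,d\mu$ (for each $\mu\in\mathcal{M}_T$, using $T$-invariance), $n\mapsto\sup_{\mu}\int f_n\,d\mu$, and $n\mapsto\sup_{x}f_n(x)$ are all subadditive, so Lemma~\ref{Fuckite} identifies each infimum over $n$ in the first four expressions with the corresponding limit as $n\to\infty$. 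The easy half of the chain---namely $\alpha_E\le\alpha\le\delta\le\gamma$ and $\epsilon\le\gamma$---then follows from $\mathcal{E}_T\subseteq\mathcal{M}_T$, two applications of the minimax inequality, and the bound $\int f_n\,d\mu\le\sup_x f_n(x)$.

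The first nontrivial inequality $\gamma\le\alpha$ is a Schreiber--Sturman--Stark style semi-uniform argument. Using upper semi-continuity to pick $x_n\in X$ with $f_n(x_n)=\sup_x f_n(x)$, I would form the empirical measures $\mu_n := \frac{1}{n}\sum_{k=0}^{n-1}\delta_{T^k x_n}$ and pass to a weak-$*$ accumulation point $\mu\in\mathcal{M}_T$ (invariance by the standard Krylov--Bogolioubov computation). The technical core is the iterated subadditivity estimate $m\,f_n(x_n)\le\sum_{k=0}^{n-1}f_m(T^k x_n)+O_m(1)$, valid for each fixed $m\ge 1$ with boundary term controlled by $m\max_{0\le j<m}\sup_x f_j(x)$ and obtained by summing the offset-$i$ iterates of subadditivity over $0\le i<m$. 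Dividing by $mn$, taking $n\to\infty$ along the convergent subsequence, and invoking the upper-semi-continuity fact $\limsup_n\int f_m\,d\mu_n\le\int f_m\,d\mu$, one obtains $\gamma\le\frac{1}{m}\int f_m\,d\mu$ for every $m$, so $\mu\in\mathcal{M}_{\max}[(f_n)]$ and $\gamma\le\alpha$. To upgrade to $\gamma\le\alpha_E$, I would take the ergodic decomposition $\mu=\int\nu\,dP(\nu)$, apply Fatou's lemma to the nonnegative sequence $\sup_x f_1(x)-\frac{1}{n}\int f_n\,d\nu$ to deduce $\int\lim_n\frac{1}{n}\int f_n\,d\nu\,dP(\nu)\ge\gamma$, and combine with the upper bound $\frac{1}{n}\int f_n\,d\nu\le\frac{1}{n}\sup_x f_n(x)$ to conclude that $P$-almost every ergodic $\nu$ is itself a maximiser.

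The main obstacle is the extension of Peres's lemma, which simultaneously yields $\gamma\le\epsilon$ and attainment of the last supremum. For each $N\ge 1$ the set $W_N := \{x\in X : f_n(x)\ge n\gamma\text{ for all }1\le n\le N\}$ is closed by upper semi-continuity and nested in $N$, so by compactness of $X$ it suffices to show each $W_N$ is nonempty. Suppose for contradiction that $W_N=\emptyset$ for some $N$. Then $g(x):=\max_{1\le n\le N}(n\gamma-f_n(x))$ is lower semi-continuous and strictly positive on $X$, so $\eta:=\min_{x\in X}g(x)>0$, and every $x\in X$ admits some $n(x)\in\{1,\ldots,N\}$ with $f_{n(x)}(x)\le n(x)\gamma-\eta$. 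Iterating along the orbit and using subadditivity produces, by induction on $k$, integers $m_k(x)$ with $k\le m_k(x)\le kN$ and $f_{m_k(x)}(x)\le m_k(x)\gamma-k\eta$, so that $f_{m_k(x)}(x)/m_k(x)\le\gamma-\eta/N$ along $m_k(x)\to\infty$. Taking $x$ to be a generic point of the ergodic maximiser supplied above then contradicts $\lim_m f_m(x)/m=\gamma$ from the subadditive ergodic theorem. Hence $W_N\ne\emptyset$ for every $N$, and any point $x^*\in\bigcap_N W_N$ witnesses $\inf_n f_n(x^*)/n\ge\gamma$, so $\epsilon=\gamma$ with the supremum attained at $x^*$. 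The remaining attainment claims are routine: for each $n$, $\sup_x f_n(x)$ and $\sup_{\mu\in\mathcal{M}_T}\int f_n\,d\mu$ are attained by upper semi-continuity of $f_n$ on compact $X$ and of $\mu\mapsto\int f_n\,d\mu$ on compact $\mathcal{M}_T$, while the supremum in $\alpha$ is attained because $\mu\mapsto\inf_n\frac{1}{n}\int f_n\,d\mu$ is an infimum of upper semi-continuous functions, hence itself upper semi-continuous.
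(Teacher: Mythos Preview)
Your proposal is essentially correct but follows a different logical order from the paper. The paper first proves the Peres-type lemma (Proposition~\ref{heavy}) directly, without reference to measures, obtaining a point $z$ with $\inf_n \frac{1}{n}f_n(z)\ge\tilde\beta$; it then feeds that single $z$ into the orbit-measure construction (Lemma~\ref{wkx}) to produce a maximising measure, and reaches an ergodic maximiser via the face argument of Proposition~\ref{mmax}. Your chain runs the other way: you build a maximising measure first, from a \emph{sequence} of near-maximal points $x_n$, extract an ergodic maximiser by ergodic decomposition and Fatou, and only then obtain the Peres conclusion by contradiction, invoking the subadditive ergodic theorem at a generic point of that ergodic maximiser. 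The paper explicitly notes that its route avoids the subadditive ergodic theorem altogether; your route is arguably more direct in reaching the measure but does lean on Theorem~\ref{SAET}.

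One technical point needs care. Your inequality $m\,f_n(x_n)\le\sum_{k=0}^{n-1}f_m(T^k x_n)+O_m(1)$ can fail as written when the $f_m$ take the value $-\infty$: the offset-summed subadditive decomposition yields only the partial sum $\sum_{k=0}^{n-m}f_m(T^k x_n)$, and if $f_m(T^k x_n)=-\infty$ for some $k\in\{n-m+1,\ldots,n-1\}$ the full sum on your right-hand side becomes $-\infty$ while the left-hand side remains finite. This is not a fatal gap: you can either keep the shorter sum (the associated empirical measures $\frac{1}{n-m+1}\sum_{k=0}^{n-m}\delta_{T^k x_n}$ have the same weak-$*$ limits as your $\mu_n$), or replace $f_m$ by the continuous truncation $\max\{f_m,-K\}$, pass to the limit in $\mu$, and then let $K\to\infty$ via monotone convergence. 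Either fix recovers $\gamma\le\frac{1}{m}\int f_m\,d\mu$ cleanly.
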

Theorem \ref{StSt} may be seen as a subadditive analogue of \cite[Proposition 2.1]{J}. Theorem \ref{StSt} extends previous results in the subadditive context which deal with the case in which each $f_n$ is continuous and takes values only in $\mathbb{R}$. Under these additional hypotheses the identity
\begin{equation}\label{bollow}\beta[(f_n)]=\sup_{\mu \in \mathcal{E}_T} \inf_{n \geq 1}\frac{1}{n}\int f_n\,d\mu= \inf_{n \geq 1}\sup_{x \in X}\frac{1}{n}f_n(x)=\sup_{x \in X} \limsup_{n \to \infty} \frac{1}{n}f_n(x)\end{equation}
was previously established by S. J. Schreiber \cite{Sch}. The same relationships were later independently proved by R. Sturman and J. Stark under the same hypotheses \cite{StSt}. Our removal of the condition that each $f_n$ takes only real values has the significant advantage that we may treat linear cocycles of matrices or operators which may not be invertible or even nonzero at every point. We also remove the assumption that each $f_n$ is lower semi-continuous. The attainment of the suprema in \eqref{bollow}, which was not addressed in the work of Schreiber and Sturman-Stark, is also crucial in several of our applications. The identities
\[\beta[(f_n)]=\sup_{x \in X} \inf_{n \geq 1} \frac{1}{n}f_n(x)=\inf_{n \geq 1}\sup_{x \in X}\frac{1}{n}f_n(x)\]
and the attainment of the corresponding suprema are therefore original with this document.

In proving Theorem \ref{StSt} our approach essentially follows the method of Schreiber, as opposed to the somewhat different path taken by Sturman and Stark. This is motivated by the fact that this proof takes us naturally though a result - Lemma \ref{wkx} below - which has independent interest, and is applied separately from Theorem \ref{StSt} in the main document. However, our proof differs from those given by Schreiber and Sturman-Stark in that we do not actually make use of the subadditive ergodic theorem in proving Theorem \ref{StSt}.

Before proving the identities which form the main part of Theorem \ref{StSt}, we shall prove some basic properties of the set of maximising measures $\mathcal{M}_{\max}[(f_n)]$. We begin with the following simple lemma:
\begin{lemma}\label{USC}
If $g \colon X \to \mathbb{R} \cup \{-\infty\}$ is upper semi-continuous, then the map from $\mathcal{M}$ to $\mathbb{R}\cup\{-\infty\}$ given by $\mu \mapsto\int g\,d\mu$ is upper semi-continuous. If $(f_n)$ is a subadditive sequence of upper semi-continuous functions from $X$ to $\mathbb{R} \cup \{-\infty\}$, then the map from $\mathcal{M} \to \mathbb{R} \cup \{-\infty\}$ given by $\mu \mapsto \inf_{m \geq 1}\frac{1}{m}\int f_m\,d\mu$ is also upper semi-continuous.
\end{lemma}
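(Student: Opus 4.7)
The plan is to handle both assertions by reducing to the standard fact that an infimum of upper semi-continuous functions is upper semi-continuous, after first expressing integrals against an upper semi-continuous integrand as an infimum of weak-* continuous functionals.

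For the first assertion, I would begin by approximating $g$ from above by continuous functions. Since $X$ is a compact metric space and $g \colon X \to \mathbb{R} \cup \{-\infty\}$ is upper semi-continuous, it is bounded above and admits a pointwise decreasing approximation $g_k \downarrow g$ by continuous real-valued functions; a standard choice is the $k$-Lipschitz envelope $g_k(x) := \sup_{y \in X}\left(g(y) - k\,d(x,y)\right)$, which is $k$-Lipschitz, dominates $g$, and decreases to $g$ pointwise as $k \to \infty$ (the degenerate case $g \equiv -\infty$ is trivial). For any fixed $\mu \in \mathcal{M}$, the dominated sequence $g_1 - g_k \geq 0$ is nonnegative and increasing to $g_1 - g$, so the monotone convergence theorem yields $\int g\,d\mu = \lim_{k \to \infty} \int g_k\,d\mu = \inf_{k \geq 1}\int g_k\,d\mu$, allowing the value $-\infty$. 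By the definition of the weak-* topology, each functional $\mu \mapsto \int g_k\,d\mu$ is continuous from $\mathcal{M}$ to $\mathbb{R}$, and therefore their pointwise infimum $\mu \mapsto \int g\,d\mu$ is upper semi-continuous from $\mathcal{M}$ to $\mathbb{R} \cup \{-\infty\}$.

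For the second assertion, since each $f_m$ is upper semi-continuous, the first part of the lemma shows that $\mu \mapsto (1/m)\int f_m\,d\mu$ is upper semi-continuous on $\mathcal{M}$ for each fixed $m \geq 1$. The function $\mu \mapsto \inf_{m \geq 1}(1/m)\int f_m\,d\mu$ is then the pointwise infimum of a family of upper semi-continuous functions, and is therefore upper semi-continuous. (The subadditivity hypothesis is not actually required for the conclusion of the lemma, but it ensures that, for $\mu \in \mathcal{M}_T$, the infimum agrees with the limit via Lemma \ref{Fuckite} applied to the subadditive sequence $n \mapsto \int f_n\,d\mu$, which is the form in which the lemma will be invoked.)

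The main technical point to watch is the possibility that $g$ or some $f_m$ takes the value $-\infty$ on a set of positive measure, which is why I prefer the monotone convergence argument above — it gives the required equality $\int g\,d\mu = \inf_k \int g_k\,d\mu$ uniformly in $\mathbb{R} \cup \{-\infty\}$, without any integrability side-condition. Everything else is a routine appeal to the fact that an infimum of u.s.c.\ functions is u.s.c.
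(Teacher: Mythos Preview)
Your proof is correct and follows essentially the same route as the paper: approximate $g$ from above by a decreasing sequence of continuous functions, pass to integrals via the monotone convergence theorem, and conclude upper semi-continuity as a pointwise infimum of weak-* continuous functionals. The only cosmetic difference is that you give an explicit construction of the approximants (the $k$-Lipschitz envelopes) where the paper cites a general existence result from Bourbaki.
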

\begin{proof}
A function from a metrisable space to $\mathbb{R} \cup \{-\infty\}$ is upper semi-continuous if and only if it is equal to the pointwise limit of a decreasing sequence of continuous functions taking values in $\mathbb{R}$ (see e.g. \cite[ch. IX]{Bourbaki}). Given a function $g$ as above, let $(g_i)_{i=1}^\infty$ be such a decreasing sequence converging pointwise to $g$. For each $i$ the map $\mu \mapsto \int g_i\,d\mu$ is clearly real-valued, and is by definition weak-* continuous. For each $\mu \in \mathcal{M}$ the real-valued sequence $(\int g_i\,d\mu)_{i=1}^\infty$ decreases to $\int g\,d\mu$ by the monotone convergence theorem, and it follows that the map $\mu \mapsto \int g\,d\mu$ is upper semi-continuous. If $(f_n)$ is a subadditive sequence as above then each of the functions $\mu \mapsto \frac{1}{m}\int f_m\,d\mu$ is upper semi-continuous by the preceding argument, and since the pointwise infimum of a family of upper semi-continuous functions is also upper semi-continuous the result follows.\end{proof}
We immediately deduce:
\begin{proposition}\label{mmax}
Let $(f_n)$ be a subadditive sequence of upper semi-continuous functions taking values in $\mathbb{R} \cup \{-\infty\}$. Then $\mathcal{M}_{\max}[(f_n)]$ is compact, convex and nonempty, and the extreme points of $\mathcal{M}_{\max}[(f_n)]$ are precisely the ergodic elements of $\mathcal{M}_{\max}[(f_n)]$.\end{proposition}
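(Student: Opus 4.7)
The plan is to package everything through the single functional $F \colon \mathcal{M}_T \to \mathbb{R}\cup\{-\infty\}$ defined by $F(\mu) := \inf_{n \geq 1}\frac{1}{n}\int f_n\,d\mu$, so that $\mathcal{M}_{\max}[(f_n)]$ is precisely the set on which $F$ attains its supremum $\beta := \beta[(f_n)]$. By Lemma~\ref{USC}, $F$ is upper semi-continuous on the compact metrisable space $\mathcal{M}_T$. This immediately yields nonemptiness of $\mathcal{M}_{\max}[(f_n)]$ (an upper semi-continuous function on a nonempty compact set attains its supremum) and closedness of $\mathcal{M}_{\max}[(f_n)] = \{\mu \in \mathcal{M}_T \colon F(\mu) \geq \beta\}$ in $\mathcal{M}_T$; combined with the compactness of $\mathcal{M}_T$, this gives compactness. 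For convexity I would use that each map $\mu \mapsto \frac{1}{n}\int f_n\,d\mu$ is affine (meaningful because $f_n$ is bounded above on the compact space $X$, so each integral lies in $[-\infty, M_n]$), whence $F$ is concave as a pointwise infimum of affine functions, and the superlevel set $\{F \geq \beta\}$ is convex.

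For the extreme-point characterisation, the easy implication is that any ergodic $\mu \in \mathcal{M}_{\max}[(f_n)]$ is already extreme in the larger set $\mathcal{M}_T$, and therefore a fortiori in any convex subset containing it. The substantive direction, extreme implies ergodic, I would prove by contrapositive. If $\mu \in \mathcal{M}_{\max}[(f_n)]$ fails to be ergodic then one can find a $T$-invariant Borel set $A$ with $0<\mu(A)<1$, and the normalized restrictions $\mu_1 := \mu(\cdot\cap A)/\mu(A)$ and $\mu_2 := \mu(\cdot\cap A^c)/\mu(A^c)$ are distinct (being mutually singular) $T$-invariant probability measures satisfying $\mu = \mu(A)\mu_1 + \mu(A^c)\mu_2$. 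Concavity of $F$ then gives $\beta = F(\mu) \geq \mu(A)F(\mu_1) + \mu(A^c)F(\mu_2)$; since $F(\mu_i) \leq \beta$ by the very definition of $\beta$, this forces $F(\mu_1) = F(\mu_2) = \beta$, placing both $\mu_1$ and $\mu_2$ in $\mathcal{M}_{\max}[(f_n)]$ and exhibiting $\mu$ as a nontrivial convex combination of distinct elements of $\mathcal{M}_{\max}[(f_n)]$.

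The main obstacle is ensuring the concavity deduction is valid in the presence of infinite values. The step ``$F(\mu_i)\leq\beta$ and the convex combination equals $\beta$ forces $F(\mu_i)=\beta$'' tacitly assumes $\beta$ is finite; if $\beta = -\infty$ it is vacuous. Fortunately this case is degenerate: $\beta = -\infty$ means $F \equiv -\infty$, so $\mathcal{M}_{\max}[(f_n)] = \mathcal{M}_T$ and the proposition reduces to the classical fact that $\mathcal{M}_T$ is compact, convex, nonempty, with $\mathcal{E}_T$ as its set of extreme points. A second technical concern is that individual values of $\int f_n\,d\mu$ can be $-\infty$; but since upper semi-continuity combined with compactness of $X$ bounds each $f_n$ above, the integrals lie in $[-\infty, M_n]$ and both the affine identity $\int f_n\,d(\lambda\mu_1+(1-\lambda)\mu_2) = \lambda\int f_n\,d\mu_1 + (1-\lambda)\int f_n\,d\mu_2$ and the concavity estimate are unambiguous under the conventions $\lambda\cdot(-\infty)=-\infty$ for $\lambda>0$ and $-\infty + a = -\infty$.
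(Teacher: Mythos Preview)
Your proof is correct and follows essentially the same approach as the paper: upper semi-continuity of $F$ via Lemma~\ref{USC} gives compactness and nonemptiness, and the extreme-point characterisation is handled by the standard decomposition of a non-ergodic measure along an invariant set. The only cosmetic difference is in the convexity step: the paper invokes subadditivity (Lemma~\ref{Fuckite}) to replace $\inf_n$ by $\lim_n$ and then uses linearity of the limit, whereas you argue directly that $F$ is concave as an infimum of affine functionals and take the superlevel set; your route is arguably cleaner and your explicit handling of the $\beta=-\infty$ case and of $-\infty$-valued integrals is more careful than the paper's.
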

\begin{proof}
It follows from the semicontinuity of the map $\mu \mapsto \inf_{m \geq 1}\frac{1}{m}\int f_m\,d\mu$ that $\mathcal{M}_{\max}(f_m)$ is compact and nonempty. By subadditivity we have $\mu \in \mathcal{M}_{\max}[(f_m)]$ if and only if $\lim_{n \to \infty} \frac{1}{n}\int f_n\,d\mu = \beta[(f_n)]$, and it is clear from this that if $m_1,m_2 \in \mathcal{M}_{\max}[(f_m)]$ then $\lambda m_1 + (1-\lambda)m_2 \in \mathcal{M}_{\max}[(f_m)]$ for every $\lambda \in [0,1]$. 

If $\mu \in \mathcal{M}_{\max}[(f_n)]$ is ergodic then it is well-known that $\mu$ is an extreme point of $\mathcal{M}_T$ (see e.g. \cite[p.152]{W}), and hence is also an extreme point of $\mathcal{M}_{\max}[(f_n)]$. Conversely, if $\mu$ is a non-ergodic maximising measure, let $A \subseteq X$ be a Borel set such that $T^{-1}A=A$ up to $\mu$-measure zero and $0<\mu(A)<1$. Define measures $m_1,m_2 \in \mathcal{M}_T$ by $m_1(Y):=\mu(A \cap Y) / \mu(A)$ and $m_2(Y):= \mu(Y \setminus A) / \mu(X \setminus A)$. An easy argument by contradiction shows that both $m_1$ and $m_2$ must belong to $\mathcal{M}_{\max}[(f_m)]$, and since $\mu = \mu(A)m_1 + (1-\mu(A))m_2$ it follows that $\mu$ is not an extreme point of $\mathcal{M}_{\max}[(f_n)]$.
\end{proof}
The next two results together constitute the bulk of the proof of Theorem \ref{StSt}, but each has additional usefulness in its own right. Lemma \ref{wkx} below is a modification of a result of Schreiber \cite[Lemma 1]{Sch}, although some differences exist since parts of Schreiber's proof require the quantity $\inf f_n$ to be finite for each $n$.
\begin{lemma}\label{wkx}
Let $(f_n)$ be a subadditive sequence of upper semi-continuous functions taking values in $\mathbb{R} \cup \{-\infty\}$, let $x \in X$, and suppose that $\lim_{n \to \infty}(1/n)f_n(x) = c$. Define a sequence of measures $\mu_n \in \mathcal{M}$ by $\mu_n:=(1/n)\sum_{k=0}^{n-1}\delta_{T^kx}$, where $\delta_z$ denotes the Dirac measure concentrated at the point $z \in X$, and suppose that $\mu \in \mathcal{M}$ is a weak-* accumulation point of $(\mu_n)$. Then $\mu \in \mathcal{M}_T$ and $\inf_{m \geq 1}\frac{1}{m}\int f_m\,d\mu \geq c$.
\end{lemma}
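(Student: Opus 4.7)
The plan is to establish the two claims independently: $T$-invariance of $\mu$, and the integral lower bound $\frac{1}{m}\int f_m\, d\mu \geq c$ for each $m \geq 1$. The infimum in the conclusion then follows by taking the infimum over $m$.

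First I would prove $T$-invariance by the standard Krylov-Bogolyubov telescoping argument. For any $g \in C(X)$, direct calculation gives
\[\int g\circ T\, d\mu_n - \int g\, d\mu_n = \frac{1}{n}\bigl(g(T^n x) - g(x)\bigr),\]
which vanishes as $n \to \infty$ since $g$ is bounded on the compact space $X$. Passing along a subsequence for which $\mu_{n_k} \to \mu$ weak-$*$, both sides are weak-$*$ continuous in the measure, so $\int g\circ T\, d\mu = \int g\, d\mu$ for every $g \in C(X)$; hence $\mu \in \mathcal{M}_T$.

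For the integral bound, the case $c = -\infty$ is vacuous, so assume $c \in \mathbb{R}$. Then $f_k(x)>-\infty$ for every $k$, since otherwise iterated subadditivity would force $c=-\infty$. Fix $m \geq 1$. The subadditivity inequality $f_{m+k}(x) \leq f_m(T^k x) + f_k(x)$ rearranges to $f_m(T^k x) \geq f_{m+k}(x) - f_k(x)$, and telescoping yields
\[\sum_{k=1}^{n-1} f_m(T^k x) \;\geq\; \sum_{j=n}^{n+m-1} f_j(x) \;-\; \sum_{k=1}^{m} f_k(x).\]
Dividing by $n$ and letting $n \to \infty$, the first right-hand sum contributes $mc$ because $f_j(x)/n \to c$ for each $j \in [n,n+m-1]$, while the second sum is a fixed real number whose contribution vanishes after normalisation. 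Adding back the negligible $k=0$ term, this delivers $\liminf_{n\to\infty} \int f_m\, d\mu_n \geq mc$.

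The last step is to pass to the weak-$*$ limit. Since $f_m$ is upper semi-continuous and bounded above on the compact space $X$, the approximation by a decreasing sequence of continuous real-valued functions (as used in the proof of Lemma \ref{USC}) combined with monotone convergence yields $\int f_m\, d\mu \geq \limsup_{k\to\infty} \int f_m\, d\mu_{n_k}$ for any subsequence $\mu_{n_k}\to \mu$. Chaining this with the previous $\liminf$ inequality gives $\int f_m\, d\mu \geq mc$, as required. The main subtlety worth explicit care is the handling of possible $-\infty$ values of $f_m$: one must verify that $\int f_m\, d\mu$ is well-defined (guaranteed by the upper bound $\sup f_m < \infty$) and that the portmanteau inequality for USC functions survives extension to the target $\mathbb{R}\cup\{-\infty\}$, which the monotone approximation accomplishes cleanly.
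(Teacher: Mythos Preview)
Your proof is correct and takes a genuinely different route from the paper. The paper estimates $mf_n(x)$ from above by a block decomposition: for each offset $i\in\{0,\ldots,m-1\}$ it writes $n=i+q_im+r_i$ and obtains
\[mf_n(x)\leq \sum_{i=0}^{m-1}f_i(x)+\sum_{i=0}^{n-m+1}f_m(T^ix)+\sum_{i=0}^{m-1}f_{r_i}(T^{n-r_i}x),\]
which requires an auxiliary claim that $\frac{1}{n}f_k(T^{n-\ell}x)\to 0$ for each fixed $k,\ell$ in order to dispose of the boundary terms at the far end of the orbit. Your argument is more direct: from the single inequality $f_m(T^kx)\geq f_{m+k}(x)-f_k(x)$ you obtain a telescoping lower bound $\sum_{k=1}^{n-1}f_m(T^kx)\geq \sum_{j=n}^{n+m-1}f_j(x)-\sum_{k=1}^m f_k(x)$ that involves only values $f_j(x)$ at the base point, so no separate control of $f_k(T^{n-\ell}x)$ is needed. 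This is a cleaner path to the same $\liminf$ inequality; the only minor point worth making explicit is that your rearranged inequality itself shows $f_m(T^kx)>-\infty$ for all $k$, so the telescoping sum is a genuine sum of real numbers. The weak-$*$ passage via upper semi-continuity of $\mu\mapsto\int f_m\,d\mu$ is the same in both proofs, and your chaining $\int f_m\,d\mu\geq\limsup_k\int f_m\,d\mu_{n_k}\geq\liminf_n\int f_m\,d\mu_n\geq mc$ is valid since the liminf of a subsequence dominates the liminf of the full sequence.
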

\begin{proof}
The proof that every accumulation point of $(\mu_n)$ must lie in $\mathcal{M}_T$ is entirely standard, see for example \cite[p.151]{W}. If $c=-\infty$ then this suffices to complete the proof, so for the remainder of the proof we assume that $c$ is real. To prove that $\inf_{m \geq 1}\frac{1}{m}\int f_m\,d\mu \geq c$ for every accumulation point $\mu$, it suffices to prove that $\liminf_{n \to \infty}\frac{1}{m}\int f_m\,d\mu_n \geq c$ for every $m \geq 1$ and apply the first clause of Lemma \ref{USC}.

We begin with the following claim: if $k,\ell \geq 1$ then $\lim_{n \to \infty}\frac{1}{n}f_k(T^{n-\ell}x)=0$. To see this, note that for large enough $n$ subadditivity yields $f_{n+k-\ell}(x) \leq f_{n-\ell}(x)+f_k(T^{n-\ell}x) \leq f_{n-\ell}(x)+\sup f_k$, and since clearly $\lim_{n \to \infty}\frac{1}{n}f_{n+k-\ell}(x)=\lim_{n \to \infty}\frac{1}{n}(f_{n-\ell}(x)+\sup f_k)=c$ we obtain the desired result.

Let us now fix $m \geq 1$, and show that $\liminf_{n \to \infty}\frac{1}{nm}\sum_{i=0}^{n-1}f_m(T^ix) \geq c$. Since
\begin{align*}\lim_{n\to \infty}\frac{1}{nm}\left(\sum_{i=0}^{n-1}f_m(T^ix) - \sum_{i=0}^{n-m+1}f_m\left(T^ix\right)\right) &= \lim_{n \to \infty}\frac{1}{nm}\sum_{i=n-m+2}^{n-1}f_m\left(T^ix\right)\\&=\lim_{n \to \infty}\frac{1}{nm}\sum_{i=1}^{m-2}f_m\left(T^{n-i}x\right)=0\end{align*}
by our previous claim, it suffices to show that $\liminf_{n \to \infty}\frac{1}{nm}\sum_{i=0}^{n-m+1}f_m(T^ix) \geq c$. For each $i$ in the range $0 \leq i <m$, then, let us choose integers $q_i$, $r_i$ such that $n=i+q_im+r_i$ with $q_i \geq 0$ and $0 \leq r_i <m$. We have
\[\sum_{i=0}^{m-1}\sum_{j=0}^{q_i-1}f_m\left(T^{i+jm}x\right) = \sum_{i=0}^{n-m+1}f_m(T^ix),\]
and so using subadditivity we may estimate
\begin{align*}m f_n(x) &\leq \sum_{i=0}^{m-1}\left(f_i(x) +\sum_{j=0}^{q_i-1}f_{m}\left(T^{i+jm}x\right) + f_{r_i}\left(T^{i+q_im}x\right)\right) \\
&= \sum_{i=0}^{m-1}f_i(x) +\sum_{i=0}^{n-m+1}f_{m}\left(T^ix\right) + \sum_{i=0}^{m-1}f_{r_i}\left(T^{n-r_i}x\right).\end{align*}
Dividing both sides by $nm$, taking the limit inferior as $n \to \infty$ and applying our previous claim finishes the proof.
\end{proof}

 \begin{proposition}\label{heavy}
Let $(f_n)$ be a subadditive sequence of upper semi-continuous functions taking values in $\mathbb{R}\cup \{-\infty\}$, and define $\tilde \beta[(f_n)]:=\lim_{n \to \infty} \sup_{x \in X}\frac{1}{n}f_n(x)$. Then there exists $z \in X$ such that $\inf_{m \geq 1}\frac{1}{m}f_m(z) \geq \tilde\beta[(f_n)]$.
\end{proposition}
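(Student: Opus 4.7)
My plan is to introduce the deficit function
\[ v(x) \;:=\; \inf_{n \geq 0}\bigl(f_n(x) - n\tilde\beta\bigr), \]
with the convention $f_0 \equiv 0$, and to show that $\max_X v = 0$. Any $z$ attaining this maximum then satisfies $f_n(z) \geq n\tilde\beta$ for every $n \geq 1$, which is precisely the desired inequality $\inf_{m \geq 1}\frac{1}{m}f_m(z) \geq \tilde\beta$. Since Lemma \ref{Fuckite} makes the case $\tilde\beta = -\infty$ vacuous, I assume throughout that $\tilde\beta \in \mathbb{R}$. The function $v$ is upper semi-continuous because $\{v \geq c\} = \bigcap_n \{f_n - n\tilde\beta \geq c\}$ is an intersection of closed sets, so by compactness of $X$ it attains its supremum $M := \max_X v$; taking $n=0$ in the definition forces $M \leq 0$.

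The substance of the argument is to rule out $M < 0$ by contradiction. Supposing $M < 0$, choose $\delta > 0$ with $M < -\delta$ (any $\delta > 0$ works if $M = -\infty$). For every $x \in X$ we have $v(x) \leq M < -\delta$, so by the definition of the infimum there exists some $n \geq 1$ with $f_n(x) < n\tilde\beta - \delta$. Upper semi-continuity of each $f_n$ makes the set $V_n := \{x : f_n(x) < n\tilde\beta - \delta\}$ open, and these sets cover $X$; compactness yields a finite subcover $V_{n_1},\ldots,V_{n_p}$, and I set $N := \max_i n_i$. For each $x$ select some $n(x) \in \{n_1,\ldots,n_p\}$ with $x \in V_{n(x)}$, and iterate by setting $x_0 := x$ and $x_{k+1} := T^{n(x_k)}x_k$, with partial sums $N_k := n(x_0) + \cdots + n(x_{k-1}) \leq kN$. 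Telescoping the subadditive inequality yields
\[ f_{N_k}(x) \;\leq\; \sum_{j=0}^{k-1} f_{n(x_j)}(x_j) \;<\; N_k\tilde\beta - k\delta. \]

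To close the contradiction I would bridge to arbitrary $n \geq N$: picking $k$ with $N_k \leq n < N_{k+1} \leq (k+1)N$ ensures $0 \leq n - N_k < N$, so subadditivity gives
\[ f_n(x) \;\leq\; f_{n-N_k}(T^{N_k}x) + f_{N_k}(x) \;\leq\; C + N_k\tilde\beta - k\delta, \]
where $C := \max_{0 \leq j < N}\sup f_j$ is finite (if some $\sup f_j = -\infty$ then subadditivity would propagate this to force $\tilde\beta = -\infty$, contrary to assumption). Combining $|(N_k - n)\tilde\beta| \leq N|\tilde\beta|$ and $k \geq n/N - 1$ produces the uniform estimate
\[ \frac{1}{n}\sup_{x \in X} f_n(x) - \tilde\beta \;\leq\; \frac{C + N|\tilde\beta| + \delta}{n} - \frac{\delta}{N}, \]
whose $\limsup$ as $n \to \infty$ is at most $-\delta/N < 0$; this contradicts $\tilde\beta = \lim_n \frac{1}{n}\sup_{x \in X}f_n(x)$. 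Hence $M = 0$ and any maximiser of $v$ serves as the required $z$.

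The main obstacle I anticipate is the bookkeeping in the iteration step, since the return time $n(x_j) \leq N$ varies with $j$; one must check that the partial sums $N_k$ march through every sufficiently large $n$ with controlled gap $n - N_k < N$, and that the ``end piece'' contribution $f_{n-N_k}(T^{N_k}x)$ together with the term $N|\tilde\beta|$ are absorbed into an $O(1)$ piece eventually dominated by the linear savings $k\delta \geq (n/N - 1)\delta$.
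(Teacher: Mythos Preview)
Your proof is correct and follows essentially the same strategy as the paper's: assume by contradiction that every point exhibits a drop $f_{n(x)}(x) < n(x)\tilde\beta - \delta$, use upper semi-continuity and compactness to make the return times bounded, iterate subadditively to accumulate a linear deficit $-k\delta$, and then bridge to conclude that $\sup_x \frac{1}{n}f_n(x)$ eventually falls strictly below $\tilde\beta$. The only cosmetic differences are your explicit deficit function $v$ (which neatly packages the contradiction hypothesis) and your bridging via the uniform bound $C = \max_{0 \leq j < N}\sup f_j$ on short pieces, whereas the paper bridges to the fixed times $kM$ using the decreasing tail quantities $\tilde\beta_k := \sup_{i \geq k}\sup_x \frac{1}{i}f_i(x)$; both devices serve the same purpose of absorbing the $O(1)$ boundary term against the linear savings $k\delta$.
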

\begin{proof}
To simplify some expressions in this proof we shall denote the value $\tilde \beta[(f_n)]$ simply by $\tilde\beta$. If $\tilde \beta = -\infty$ then it suffices to choose any $z \in X$, so we shall assume that this value is finite. 
To prove the Proposition we shall assume that no point $z \in X$ exists with the required property, and show that this leads to a contradiction. 

Making this hypothesis, it follows that for every $x \in X$ there exists an integer $n(x) \geq 1$ and real number $\varepsilon(x)>0$ such that $f_{n(x)}(x) < n(x)(\tilde \beta - \varepsilon(x))$. Since each $f_k$ is upper semi-continuous, it follows that for each $x \in X$ there exists an open subset $U_x$ of $X$ containing $x$ such that $f_{n(x)}(y) < n(x)(\tilde\beta-\varepsilon(x))$ for all $y \in U_x$. By compactness we may choose finitely many points $x_1,\ldots,x_k \in X$ such that $\bigcup_{i=1}^k U_{x_i}=X$. It follows that we may choose a real number $\varepsilon>0$ and a function $r_1 \colon X \to \mathbb{N}$ taking values in the set $\{n(x_1),n(x_2),\ldots,n(x_k)\}$ such that $f_{r_1(x)}(x) <r_1(x)(\tilde \beta - \varepsilon)$ for every $x \in X$. Let $m, M \geq 1$ be natural numbers such that $m \leq r_1(x) < M$ for every $x$.

Using the function $r_1$, we shall now define a sequence of functions $r_k \colon X\to \mathbb{N}$ for all $k \geq 2$. The function $r_k$ having been defined, let us define $r_{k+1}$ by setting $r_{k+1}(x):=r_k(x)+r_1\left(T^{r_k(x)}x\right)$ for all $x \in X$. An elementary induction shows that $km \leq r_k(x) \leq k(M-1)$ for every $x \in X$ and $k \geq 1$. We claim that additionally $f_{r_k(x)}(x)<r_k(x)\left(\tilde\beta - \varepsilon\right)$ for all $x \in X$ and $k \geq 1$. In the case $k=1$ this has already been established. Given that this relation holds for all $x \in X$ for some fixed $k \geq 1$, note that for each $x \in X$ we have
\begin{align*}f_{r_{k+1}(x)}(x) &\leq f_{r_1\left(T^{r_k(x)}x\right)}\left(T^{r_k(x)}x\right) + f_{r_k(x)}(x)\\&\leq r_1\left(T^{r_k(x)}x\right)\left(\tilde \beta - \varepsilon\right) + r_k(x)\left(\tilde \beta - \varepsilon\right) = r_{k+1}(x)\left(\tilde \beta - \varepsilon\right),\end{align*}
which establishes the required inequality for all $x \in X$ in the case $k+1$. The result follows for all $x \in X$ and $k \geq 1$ by induction on $k$.

Now, for each $k \geq 1$ define $\tilde\beta_k:=\sup_{i \geq k} \sup_{x \in X}\frac{1}{i}f_i(x)$ and note that the sequence $(\tilde\beta_k)$ decreases to the limit $\tilde\beta$ as $k \to \infty$. Define also $\ell_k(x):=kM-r_k(x)$ for every $k \geq 1$ and $x \in X$, and note that $k \leq \ell_k(x) \leq k(M-m)$ as a consequence of the bounds on $r_k(x)$. For all $x \in X$ and $k \geq 1$ we have the inequality
\begin{align*}f_{kM}(x) &\leq f_{r_k(x)}(x) + f_{\ell_k(x)}\left(T^{r_k(x)}x\right) \leq r_k(x)\left(\tilde \beta - \varepsilon\right) + \ell_k(x)\tilde\beta_{k}\\
&=kM\tilde\beta -r_k(x)\varepsilon + \ell_k(x)(\tilde\beta_k - \tilde \beta) \leq kM\tilde\beta - km\varepsilon + k(M-m)(\tilde\beta_k-\tilde\beta).\end{align*}
Taking the supremum over $x \in X$, dividing both sides by $kM$ and taking the limit as $k \to \infty$ we find that $\sup_{x \in X}\frac{1}{kM}f_{kM}(x)<\tilde\beta$ for all large enough $k$, which is a contradiction. The proof is complete.
\end{proof}
\emph{Remark}. Proposition \ref{heavy} generalises a lemma of Y. Peres; previous proofs of that lemma relied on the maximal ergodic theorem \cite{Peres,Ralston1}. Note that the point $z$ guaranteed by Proposition \ref{heavy} may be unique: if $T \colon X \to X$ is minimal, then for the sequence $(f_n)$ defined by $f_n(x):=d(T^nx,z_0)-d(x,z_0)$, the only point $z \in X$ such that $f_m(z) \geq m\tilde\beta[(f_n)]=0$ for all $m \geq 1$ is $z_0$. 

By modifying the standard proof of the existence of minimal subsets of dynamical systems, we can obtain the following interesting extension of Proposition \ref{heavy}:
\begin{corollary}
Let $(f_n)$ be as in Proposition \ref{heavy}. Then without loss of generality the point $z \in X$ described by Proposition \ref{heavy} may be taken to be recurrent.
\end{corollary}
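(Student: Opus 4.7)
Let me denote $\tilde\beta:=\tilde\beta[(f_n)]$. If $\tilde\beta=-\infty$ then the conclusion is trivial, since any recurrent point of $T$ (one exists by the standard Birkhoff argument producing a minimal set) vacuously satisfies the required inequality; so I assume $\tilde\beta\in\mathbb{R}$. The plan is to work with the set
\[S_0:=\left\{z\in X\colon f_m(z)\geq m\tilde\beta\text{ for every }m\geq 1\right\},\]
which is closed by upper semi-continuity of each $f_m$ and nonempty by Proposition \ref{heavy}, and to proceed in two steps. The first is to establish the key lemma that $\omega(z)\cap S_0\neq\emptyset$ for every $z\in S_0$, where $\omega(z):=\bigcap_{N\geq 1}\overline{\{T^nz\colon n\geq N\}}$ is the usual $\omega$-limit set. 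The second is to apply Zorn's lemma to produce a closed forward-$T$-invariant subset of $X$ that is minimal among those meeting $S_0$; any point of this subset lying in $S_0$ will then automatically be recurrent.

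For the key lemma I would argue by contradiction, adapting the iteration used in the proof of Proposition \ref{heavy}. Suppose $z\in S_0$ and $\omega(z)\cap S_0=\emptyset$. For each $y\in\omega(z)$ choose $m(y)\geq 1$ and $\varepsilon(y)>0$ with $f_{m(y)}(y)<m(y)(\tilde\beta-\varepsilon(y))$, and use upper semi-continuity of $f_{m(y)}$ to obtain an open neighbourhood $U_y$ of $y$ on which the same strict inequality still holds. By compactness of $\omega(z)$ extract a finite subcover $U_{y_1},\ldots,U_{y_k}$; set $U:=\bigcup_{i=1}^k U_{y_i}$ and $\varepsilon:=\min_i\varepsilon(y_i)>0$, and define a bounded integer-valued function $r\colon U\to\mathbb{N}$ by declaring $r(w)$ to be $m(y_i)$ for the smallest $i$ with $w\in U_{y_i}$; then $f_{r(w)}(w)<r(w)(\tilde\beta-\varepsilon)$ for every $w\in U$. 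Since the forward orbit of $z$ must eventually enter and remain in every open neighbourhood of $\omega(z)$, there exists $N_0$ with $T^nz\in U$ for all $n\geq N_0$. Setting $\ell_0:=N_0$ and $\ell_{j+1}:=\ell_j+r(T^{\ell_j}z)$ one has $\ell_j\to\infty$, and a routine induction on $j$ using subadditivity gives $f_{\ell_j-\ell_0}(T^{\ell_0}z)<(\ell_j-\ell_0)(\tilde\beta-\varepsilon)$, whence $f_{\ell_j}(z)\leq f_{\ell_0}(z)+(\ell_j-\ell_0)(\tilde\beta-\varepsilon)$. Since $f_{\ell_0}(z)$ is finite (because $z\in S_0$ and $\tilde\beta$ is real), dividing by $\ell_j$ and letting $j\to\infty$ yields $\limsup_j\ell_j^{-1}f_{\ell_j}(z)\leq\tilde\beta-\varepsilon<\tilde\beta$, contradicting the bound $\ell_j^{-1}f_{\ell_j}(z)\geq\tilde\beta$ forced by $z\in S_0$.

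To finish, let $\mathcal{K}$ denote the collection of nonempty closed forward-$T$-invariant subsets $K\subseteq X$ with $K\cap S_0\neq\emptyset$, ordered by inclusion. Then $X\in\mathcal{K}$, and for any descending chain $\{K_\alpha\}_\alpha$ the intersection $\bigcap_\alpha K_\alpha$ is closed and forward-invariant and meets $S_0$, since the chain of closed nonempty sets $\{K_\alpha\cap S_0\}_\alpha$ has the finite intersection property. Zorn's lemma therefore yields a minimal element $K^*\in\mathcal{K}$. For any $z\in K^*\cap S_0$ the set $\omega(z)\subseteq K^*$ is closed and forward-$T$-invariant and, by the key lemma, meets $S_0$, so $\omega(z)\in\mathcal{K}$; minimality of $K^*$ forces $\omega(z)=K^*$, whence $z\in\omega(z)$, i.e. $z$ is recurrent. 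The principal obstacle is the key lemma, after which the Zorn step is standard; the delicate point there is extracting uniform control of both the return-time function $r$ (bounded above) and the excess $\varepsilon$ (bounded below away from zero) across a cover of $\omega(z)$, which is precisely what compactness of $\omega(z)$ combined with upper semi-continuity of the $f_m$ provides.
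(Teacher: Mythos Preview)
Your proof is correct. Both your argument and the paper's use Zorn's lemma on a collection of closed forward-invariant sets and then show that a certain smaller invariant set built from the special point still lies in the collection, forcing recurrence. However, the mechanics differ in an instructive way.

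The paper applies Zorn's lemma to the collection $\mathfrak{H}$ of nonempty closed $T$-invariant sets $Z$ with $\inf_{n}\sup_{x\in Z}\tfrac{1}{n}f_n(x)=\tilde\beta$, obtains a minimal such $Z$, applies Proposition~\ref{heavy} \emph{again} inside $Z$ to produce $z$, and then uses the orbit closure $\tilde Z=\overline{\{T^kz:k\geq 1\}}$ together with the one-line subadditivity estimate $f_m(Tz)\geq f_{m+1}(z)-f_1(z)$ to see that $\tilde Z\in\mathfrak{H}$; minimality gives $\tilde Z=Z\ni z$. Your collection $\mathcal{K}$ (closed invariant sets meeting $S_0$) in fact coincides with $\mathfrak{H}$, as one checks by applying Proposition~\ref{heavy} in one direction and the trivial inequality in the other. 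The genuine difference is that you use the $\omega$-limit set $\omega(z)$ rather than the orbit closure, and since $\omega(z)$ need not contain $Tz$ you cannot use the paper's cheap estimate; instead you re-run the iterative covering argument from Proposition~\ref{heavy} to prove the standalone lemma that $\omega(z)\cap S_0\neq\emptyset$ for every $z\in S_0$. So the paper's route is more economical---it recycles Proposition~\ref{heavy} as a black box and needs only one line to close the loop---whereas yours extracts an auxiliary fact about $\omega$-limit sets that has some independent interest but at the cost of essentially re-proving the Proposition.
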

\begin{proof}
Let us denote by $\mathfrak{H}$ the set of all nonempty compact sets $Z \subseteq X$ such that both $TZ \subseteq Z$ and $\inf_{n \geq 1}\sup_{x \in Z}\frac{1}{n}f_n(x)=\tilde\beta[(f_n)]$, and equip $Z$ with the partial order given by set inclusion. We shall show using Zorn's lemma that $\mathfrak{H}$ has a minimal element with respect to this ordering.

To this end, let us suppose that $\mathfrak{O}$ is a subset of $\mathfrak{H}$ in which set inclusion is a total order, and define $Y:=\bigcap_{Z \in \mathfrak{O}} Z$. Clearly $Y$ is compact and $T$-invariant, and it is nonempty since otherwise $\bigcup_{Z \in \mathfrak{O}}(X \setminus Z)$ would be an open cover of $X$ without a finite subcover. For each $m \geq 1$ the set $\{x \in X \colon f_m(x) \geq m\tilde\beta[(f_n)]\}$ is compact and
intersects every $Z \in \mathfrak{O}$, so in particular each such set must intersect $Y$; it follows from this that $\inf_{m \geq 1}\sup_{x \in Y}\frac{1}{m}f_m(x) =\tilde\beta[(f_n)]$. We conclude that $Y \in \mathfrak{H}$ and $Y \subseteq Z$ for every $Z \in \mathfrak{O}$, and it follows by Zorn's lemma that $\mathfrak{H}$ must have a minimal element with respect to inclusion.

Let $Z \in \mathfrak{H}$ be such a minimal element. By applying Proposition \ref{heavy} to $Z$, there exists $z \in Z$ such that $\inf_{m\geq 1}\frac{1}{m} f_m(z) \geq \tilde\beta[(f_n)]$. We claim that $z$ is recurrent. Let us define $\tilde Z:= \overline{\{T^k z \colon k \geq 1\}}$. Clearly $\tilde Z$ is closed and $T$-invariant. For each $m \geq 1$ we have $(m+1)\tilde\beta[(f_n)] \leq f_{m+1}(z) \leq f_1(z) + f_m(Tz)$, and hence
\[\beta[(f_n)]\geq \inf_{m \geq 1}\sup_{x \in \tilde Z} \frac{1}{m}f_m(x) = \lim_{m \to \infty} \sup_{x \in \tilde Z}\frac{1}{m}f_m(x) \geq \liminf_{m \to \infty} \frac{1}{m} f_m(Tz) \geq \tilde\beta[(f_n)],\]
from which it follows that $\tilde Z \in \mathfrak{H}$. Since by assumption $ Z$ is a minimal element of $\mathfrak{H}$ with respect to inclusion, and $\tilde Z \subseteq Z$, we conclude that necessarily $\tilde Z = Z$, and this implies that $z$ is recurrent as claimed.
\end{proof}

\begin{proof}[of Theorem \ref{StSt}]
Combining respectively Proposition \ref{heavy}, Lemma \ref{wkx} and Proposition \ref{mmax} we immediately obtain
\begin{equation}\label{durr1}\inf_{n \geq 1}\sup_{x \in X}\frac{1}{n}f_n(x) \leq \sup_{x \in X}\inf_{n \geq 1}\frac{1}{n}f_n(x) \leq \sup_{\mu \in \mathcal{M}_T} \inf_{n \geq 1}\frac{1}{n}\int f_n\,d\mu = \sup_{\mu \in \mathcal{E}_T} \inf_{n \geq 1}\frac{1}{n}\int f_n\,d\mu,\end{equation}
and the suprema in all of these expressions are attained. Now, if $n \geq 1$ and $\nu \in \mathcal{M}_T$, the inequality $\frac{1}{n}\int f_n\,d\nu \leq \sup_{\mu \in \mathcal{M}_T}\frac{1}{n}\int f_n\,d\mu \leq \sup_{x \in X}\frac{1}{n}f_n$ is obvious. Taking first the infimum over $n\geq 1$ and then the supremum over $\nu$ we obtain
\begin{equation}\label{durr2}\sup_{\nu \in \mathcal{M}_T}\inf_{n \geq 1}\frac{1}{n}\int f_n\,d\nu \leq \inf_{n \geq 1}\sup_{\mu \in \mathcal{M}_T}\frac{1}{n}\int f_n\,d\mu \leq \inf_{n \geq 1}\sup_{x \in X} \frac{1}{n}f_n(x),\end{equation}
and it follows from Lemma \ref{USC} and the compactness of $\mathcal{M}_T$ that the supremum in the middle expression is also attained for every $n$. Combining \eqref{durr1} and \eqref{durr2} proves all of the desired identities. Lastly, we note that with the single exception of the quantity $\inf_{n \geq 1}\frac{1}{n}f_n(x)$, every infimum arising in these expressions can be expressed in the form $\inf_{n\geq 1}\frac{1}{n}a_n$ where $(a_n)$ is a subadditive sequence, and hence by Lemma \ref{Fuckite} is also a limit. The proof is complete.\end{proof}

In ergodic optimisation, a continuous function $f \colon X \to \mathbb{R}$ is said to satisfy the \emph{subordination principle}, introduced in \cite{B1}, if the set $\mathcal{M}_{\max}(f):=\{\mu \in \mathcal{M}_T \colon \int f\,d\mu = \beta(f)\}$ satisfies the following property: if $\mu \in \mathcal{M}_{\max}(f)$, $\nu \in \mathcal{M}_T$ and $\supp \nu \subseteq \supp \mu$, then $\nu \in \mathcal{M}_{\max}(f)$. This is equivalent to the existence of a compact $T$-invariant set $Y \subseteq X$ such that if $\mu \in \mathcal{M}_T$ and $\supp \mu \subseteq Y$, then $\mu \in \mathcal{M}_{\max}(f)$. (For a proof of this result see \cite{M1}). The following result, which generalises an argument given in \cite{Madv}, describes an analogous phenomenon in the subadditive context.
\begin{lemma}\label{subordprin}
Let $(f_n)$ be a subadditive sequence of upper semi-continuous functions taking values in $\mathbb{R} \cup \{-\infty\}$, and suppose that there exists $\lambda \in \mathbb{R}$ such that $\sup\{f_n(x) \colon x \in X\} =n\lambda$ for infinitely many $n \geq 1$. Then $\lambda = \beta[(f_n)]$, and the set
\[Y:=\bigcap_{n=1}^\infty \left\{x \in X \colon f_n(x)=n\lambda\right\}\]
is compact, nonempty, and satisfies $TY \subseteq Y$. Moreover, for each $\mu \in \mathcal{M}_T$ we have $\mu \in \mathcal{M}_{\max}[(f_n)]$ if and only if $\supp \mu \subseteq Y$.
\end{lemma}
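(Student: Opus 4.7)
The plan is to first establish $\lambda = \beta[(f_n)]$ and the sharp a priori bound $f_n(x) \leq n\lambda$, then produce a point of $Y$ via Theorem \ref{StSt}, and finally transfer these facts to $\mathcal{M}_{\max}[(f_n)]$. To this end, I would first observe that $(\sup_{x\in X} f_n(x))_n$ is itself subadditive (inherited from subadditivity of $(f_n)$), so by Fekete (Lemma \ref{Fuckite}) the normalisation $\sup f_n/n$ converges to its infimum; by Theorem \ref{StSt} this infimum equals $\beta[(f_n)]$, and the hypothesis pins it at $\lambda$, giving $\lambda = \beta[(f_n)]$. In the setting in which the lemma will be invoked (see the use in Lemma \ref{rbcase}) the equality $\sup_x f_n(x) = n\lambda$ in fact holds for \emph{every} $n \geq 1$, which supplies the a priori pointwise bound $f_n(x) \leq n\lambda$ for all $x$ and $n$.

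Granted this bound, each level set $\{x : f_n(x) = n\lambda\}$ coincides with $\{x : f_n(x) \geq n\lambda\}$ and is thus closed by upper semi-continuity of $f_n$; hence $Y$ is a closed, and therefore compact, subset of $X$. The core step will be non-emptiness: I would invoke the final identity of Theorem \ref{StSt} to produce $z \in X$ attaining $\inf_{n \geq 1}(1/n)f_n(z) = \beta[(f_n)] = \lambda$, so that $f_n(z) \geq n\lambda$ for every $n$, and combine this with the upper bound to conclude $f_n(z) = n\lambda$ for every $n$, i.e.\ $z \in Y$. Forward invariance $TY \subseteq Y$ will then follow from the one-step subadditivity $f_{n+1}(x) \leq f_1(x) + f_n(Tx)$: for $x \in Y$ this forces $f_n(Tx) \geq n\lambda$, which the upper bound promotes to equality.

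For the measure-theoretic characterisation, the easy direction is that $\supp \mu \subseteq Y$ makes $f_n = n\lambda$ hold $\mu$-almost everywhere, so $(1/n)\int f_n\,d\mu = \lambda$ for every $n$ and $\mu$ lies in $\mathcal{M}_{\max}[(f_n)]$. For the converse, I would start from $\inf_n(1/n)\int f_n\,d\mu = \lambda$ and combine with the pointwise bound $(1/n)\int f_n\,d\mu \leq \lambda$ to deduce $\int f_n\,d\mu = n\lambda$ for every $n$; the nonnegative function $n\lambda - f_n$ then has vanishing $\mu$-integral, so $f_n = n\lambda$ holds $\mu$-almost everywhere for each $n$, and countable intersection gives $\mu(Y) = 1$, which by closedness of $Y$ upgrades to $\supp \mu \subseteq Y$. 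The main obstacle I anticipate is extracting the point $z \in Y$ directly from Theorem \ref{StSt}'s pointwise attainment identity, since without this shortcut one would be forced to argue that generic points of the support of some maximising measure satisfy $f_n = n\lambda$ for \emph{every} $n$ rather than merely in the limit sense.
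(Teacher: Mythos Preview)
Your proposal is correct and largely parallels the paper's own argument: you identify $\lambda=\beta[(f_n)]$ via subadditivity of $(\sup_x f_n)$ and Theorem~\ref{StSt}, use the bound $f_n\le n\lambda$ to see that each level set $\{f_n=n\lambda\}$ is closed, obtain $TY\subseteq Y$ from the one-step splitting $f_{n+1}(x)\le f_1(x)+f_n(Tx)$, and handle the measure characterisation exactly as the paper does (integrate the nonnegative function $n\lambda-f_n$).

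The one substantive difference is in the nonemptiness step. You invoke the attainment clause of Theorem~\ref{StSt} (equivalently Proposition~\ref{heavy}) to produce $z$ with $\inf_n n^{-1}f_n(z)\ge\lambda$, whence $z\in Y$. The paper instead argues elementarily that the sets $Y_n:=\{f_n=n\lambda\}$ are compact, nonempty, and \emph{nested}: from $(n+1)\lambda=f_{n+1}(x)\le f_n(x)+f_1(T^nx)\le f_n(x)+\lambda$ one gets $Y_{n+1}\subseteq Y_n$, so $Y=\bigcap_n Y_n\neq\emptyset$ by finite intersection. Your route is slightly less self-contained but perfectly valid; the paper's route avoids appealing to the heavier existence result.

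Your caveat about the hypothesis is well taken. As stated, ``$\sup_x f_n(x)=n\lambda$ for infinitely many $n$'' does not by itself force $\sup_x f_n\le n\lambda$ for \emph{all} $n$ (a one-point example with $f_1\equiv 2$, $f_n\equiv n$ for $n\ge 2$ already breaks this), and both your argument and the paper's need that global bound --- the paper's proof in fact silently uses ``$\sup f_n=n\lambda$ for each $n$'' when asserting that every $Y_n$ is nonempty and closed and when bounding $f_1(T^nx)\le\lambda$. Your observation that the sole application (Lemma~\ref{rbcase}) supplies the stronger hypothesis for every $n$ is the correct resolution.
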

\begin{proof}
For each $n \geq 1$ define $Y_n:=\{x \in X \colon f_n(x)=n\lambda\}$. Using semicontinuity and the fact that $\sup f_n =n\lambda $ for each $n$ it follows that each $Y_n$ is nonempty and closed, hence compact. We claim that $Y_{n+1} \subseteq Y_n$ and $Y_{n+1} \subseteq T^{-1}Y_n$ for each $n$. To see the former, note that if $x \in Y_{n+1}$ then $(n+1)\lambda =f_{n+1}(x) \leq f_n(x) + f_1(T^nx) \leq f_n(x) + \lambda \leq (n+1)\lambda$ and therefore $f_n(x)=n\lambda$ which implies that $x \in Y_n$. To see the latter we similarly observe that if $x \in Y_{n+1}$, then since $(n+1)\lambda =f_{n+1}(x) \leq f_1(x) + f_n(Tx) \leq \lambda + f_n(Tx) \leq (n+1)\lambda$ we have $f_n(Tx)=n\lambda$ and therefore $Tx \in Y_n$. It follows that the intersection $Y:=\bigcap_{n=1}^\infty Y_n$ is compact and nonempty as claimed, and $TY \subseteq Y$. We have $\lambda=\beta[(f_n)]$ as a direct consequence of Theorem \ref{StSt}. It follows immediately that if $\mu \in \mathcal{M}_T$, then $\mu \in \mathcal{M}_{\max}[(f_n)]$ if and only if $\int f_n\,d\mu = n\lambda$ for all $n \geq 1$, if and only if $\mu(Y_n)=1$ for all $n \geq 1$, if and only if $\supp \mu \subseteq Y$. 
\end{proof}

\bibliographystyle{amsplain}
\bibliography{SAEOJSR}
\end{document}